\newcommand{\Iota}{\mathrm{I}}
\newcommand{\Mho}{{\mbox{\rotatebox[origin=c]{180}{$\Omega$}}}}
\newcommand{\assign}{:=}
\newcommand{\comma}{{,}}
\newcommand{\mathi}{\mathrm{i}}
\newcommand{\nin}{\not\in}
\newcommand{\nocomma}{}
\newcommand{\tmdummy}{$\mbox{}$}
\newcommand{\tmem}[1]{{\em #1\/}}
\newcommand{\tmmathbf}[1]{\ensuremath{\boldsymbol{#1}}}
\newcommand{\tmop}[1]{\ensuremath{\operatorname{#1}}}
\newcommand{\tmstrong}[1]{\textbf{#1}}
\newcommand{\tmtextit}[1]{{\itshape{#1}}}
\newenvironment{enumeratealpha}{\begin{enumerate}[a{\textup{)}}] }{\end{enumerate}}
\newenvironment{enumeratenumeric}{\begin{enumerate}[1.] }{\end{enumerate}}
\newenvironment{itemizedot}{\begin{itemize} }{\end{itemize}}
\newenvironment{proof}{\noindent\textbf{Proof\ }}{\hspace*{\fill}$\Box$\medskip}
\newtheorem{corollary}{Corollary}
\newtheorem{lemma}{Lemma}
\newtheorem{proposition}{Proposition}
{\theorembodyfont{\rmfamily}\newtheorem{remark}{Remark}}
\newtheorem{theorem}{Theorem}
\title{Integrability of potentials of degree $k \neq \pm 2$. Second order
variational equations between Kolchin solvability and
Abelianity\\[1em]
{\normalsize{Guillaume Duval and Andrzej J.~ Maciejewski}}}
\begin{document}

\maketitle

\begin{abstract}
  In our previous paper: {\tmem{Integrability of Homogeneous potentials of
  degree $k = \pm 2$. An application of higher order variational equations}},
  we tried to extract some particular structures of the higher variational
  equations (the $\tmop{VE}_p$ for $p \geqslant 2$), along particular
  solutions of some Hamiltonian systems. Then, we use them to get new Galois
  obstructions to the integrability of natural Hamiltonian with potential of
  degree $k = \pm 2$. In the present work, we apply the results of the
  previous paper, to the complementary cases, when the degrees of the
  potentials are relative integers $k$, with $|k| \geqslant 3$. Since these
  cases are much more general and complicated, we reduce our study only to the
  second variational equation $\tmop{VE}_2$.
\end{abstract}

\section{Presentation}

This paper completes the previous one {\cite{duval12}} by applying the same
results and strategies to the complementary cases. That is, we still study the
integrability of homogeneous potentials along Darboux points, but here we
assume that the degree $k$ is an arbitrary relative integer with $|k|
\geqslant 3$. As we shall see thing are technically much more complicated for
the following reasons. At first, the assumption over the degree are more
general, and secondly, here in contrast to the cases when $|k| = 2$, the
Morales Ramis table (see Table \ref{tableMR}) gives discreet obstructions at
the level of the first order variational equation $\tmop{VE}_1$. This will
force the study to encompass a judge number of distinct cases. As a
consequence, our major results, Proposition \ref{prop:test1}, Theorems
\ref{prop:EXtest2} and \ref{prop:ve2test2} below, just concern the Galois
group of the second variational equation $\tmop{VE}_2$. They are not
definitive results which guarantee that the Galois groups of the associated
systems $\tmop{VE}^{\gamma}_{2, \alpha}$ and $_{} \tmop{EX}_{2, \alpha,
\beta}^{\gamma}$ are virtually Abelian in such and such cases, but they
constitute effective algorithms to test such possibilities in particular
cases. These results convert the virtual Abelianity of the Galois group to
testing some {\tmem{Ostrowski relations}} between first level integrals
$\Phi$, $\Psi_{\alpha}$, $\Psi_{\gamma}$ etc, when they are so. The reader
will see that in general, these integrals are very complicated. And the main,
difficulty will be be the following : from the form of a given integrals, it
will be in general very easy to predict that it can be algebraic if it is so.
But in contrast when it does not have such specific form, it is very very
difficult to decide when it is transcendental. Our main ingredient for this
will be Remark \ref{rem:fuchs} below.

In addition to the first paper, this one contains one additional idea which is
interesting from a theoretical point of view. This is what we call the
{\tmem{cohomological argument}}, which allow to test that a second level
integral is indeed computable in closed form, that is to test if a Galois
group is virtually Abelian, without computing effectively this second level
integral in closed form explicitly. This procedure was discovered when dealing
with the present second level integrals, but it can be applied in more general
contexts.

Let us mention finally, that this approach is not isolated. Combot in
{\cite{combot12}}, deal with the same problem for homogeneous potentials of
degree $k = - 1$, while in the same time Weil in {\cite{weil12}}, is studying
exactly the same problem than us but from the complementary point of view of
{\tmem{gauge-transformations}}, which has the advantage to convert, the
original systems into new ones which are much more simple in the sens that the
virtual Abelianity of the Galois group can be directly read in the form of the
new system.

In order to helps the reader, into the reading of this paper, let's briefly
present how it is organised. In Section 2, we present the systems
$\tmop{VE}^{\gamma}_{2, \alpha}$ and $_{} \tmop{EX}_{2, \alpha,
\beta}^{\gamma}$. Here the idea is that as for the study of $\tmop{VE}_1$,
these new systems are much more easy to study after the so called
{\tmem{Yoshida-transformation}}, than in their original time-parametrisation.
In Section 3, we present the associated second level integrals involved in
those systems and their intrinsic hierarchy. Section 4, contains the technical
ingredients both theoretical and practical which are going to be useful for
the proofs of Theorems \ref{prop:EXtest2} and \ref{prop:ve2test2} (in Section
5), and for some effective remarks about $\tmop{VE}^{\gamma}_{2, \alpha}$ and
$\tmop{EX}_{2, \alpha, \beta}^{\gamma}$. These practical studies will be done
in Sections 6 and 7 below. Moreover, we recommend the reader, to first take a
look, to Section \ref{sec:exp}, where we present some experimental facts about
the complexity of the law that govern the virtual Abelianity of
$\tmop{VE}^{\gamma}_{2, \alpha}$. We hope that this will help him to
understand how we are using the present criteria and why we where obliged to
deal with such tremendous casuistic.

\section{From $\tmop{VE}_1$ to $\tmop{VE}_2$ through the Yoshida
transformations}

In all the previous works concerning the integrability of homogeneous
potentials along {\tmem{Darboux points}}, the key result for the analysis of
the first variational equation $\tmop{VE}_1$, was \ the conversion of this
differential system in time parametrisation \ to an \ equivalent one in new
variable $z$. This is the Yoshida transformation given by
\[ t \longmapsto z = \varphi^k (t) . \]
This transformation convert the initial variational equations over a
hyperelliptic curve into a Fuchsian equation over $\mathbbm{P}^1$, with
singularities at $z \in \{0 ; 1 ; \infty\}$.

In this Section we shall recall some results concerning this transformation,
and we show how it applies to the study of the second variational equation
$\tmop{VE}_2$.

\subsection{The subsystems of $\tmop{VE}_2$ to deal with}

Here, we assume that $V'' (c)$ is diagonalisable. Hence, $\tmop{VE}_1$ splits
into a direct sum of equations which have the following form
\[ \ddot{x} = - \lambda_{\alpha} \varphi^{k - 2} x \nocomma, \]
where $\lambda_{\alpha}$ are the eigenvalues of $V'' (c)$, and $\varphi =
\varphi ( t)$ is a particular solution defined by a Darboux point
$\tmmathbf{d}$. For each of these equations we denote by $G_1 = G_{\alpha}
=\mathcal{G} (k, \lambda_{\alpha})$ its differential Galois group over the
field $K =\mathbbm{C} (\varphi, \dot{\varphi})$.

From Proposition 2.5 of {\cite{duval12}}, we know that the differential
Galois group of $\tmop{VE}_2$ is virtually Abelian iff \ the same property
hold true for the differential Galois groups of the systems
\[ \tmop{VE}_{2, \alpha}^{\gamma}  \hspace{1em} \tmop{and} \hspace{1em}
   \tmop{EX}_{2, \alpha, \beta}^{\gamma} \text{\tmop{for} \tmop{all}}
   \hspace{1em} \alpha ; \beta ; \gamma \hspace{1em} \tmop{with} \hspace{1em}
   \alpha \neq \beta . \]
In time parametrisation, these systems have the following form (see equations
(2.20) and (2.22) of {\cite{duval12}})
\[ \tmop{VE}_{2, \alpha}^{\gamma}  \hspace{1em} \left\{ \begin{array}{lll}
     \ddot{x} & = & - \lambda_{\alpha} \varphi^{k - 2} x\\
     \ddot{y} & = & - \lambda_{\gamma} \varphi^{k - 2} y + \varphi^{k - 3} x^2
   \end{array} \right. ; .. \]
and
\[ \tmop{EX}_{2, \alpha, \beta}^{\gamma} \hspace{1em} \left\{
   \begin{array}{lll}
     \ddot{x} & = & - \lambda_{\alpha} \varphi^{k - 2} x\\
     \ddot{y} & = & - \lambda_{\beta} \varphi^{k - 2} y\\
     \ddot{u} & = & - \lambda_{\gamma} \varphi^{k - 2} u + \varphi^{k - 3} xy
   \end{array} \right. \]

Here, we directly assumed that the coefficients appearing in the
non-homogeneous terms in the right hand sides of the last equations of those
systems are non zero. Otherwise, the corresponding second variational equation
will be reduced to the first variational \ equation.

\subsection{The Morales-Ramis table}

The full list of all values of $\lambda \in \mathbbm{C}$ for which the
differential Galois group of equation
\begin{equation}
  \ddot{x} = - \lambda_{} \varphi^{k - 2} x, \label{eqVElambda}
\end{equation}
is virtually Abelian is given in the following table, \ where $p$ denotes an \
integer.

\begin{table}[h]
  \begin{tabular}{|c|c|c|c|}
    \hline
    $G (k, \lambda)^{\circ}$ & $k$ & $\lambda$ & Line Number\\
    \hline
    & $k = \pm 2$ & $\lambda$ arbitrary complex number & 1\\
    \hline
    $G_a$ & $|k| \geqslant 3$ & $\lambda (k ; p) = p + \frac{k}{2} p (p - 1)$
    & 2\\
    \hline
    & 1 & $p + \frac{1}{2} p (p - 1)$, $p \neq - 1 ; 0$ & 3\\
    \hline
    & -1 & $p - \frac{1}{2} p (p - 1)$, $p \neq 1 ; 2$ & 4\\
    \hline
    $\{\tmop{Id}\}$ & 1 & $0$ & 5\\
    \hline
    & -1 & $1$ & 6\\
    \hline
    & $|k| \geqslant 3$ & $\frac{1}{2}  \left( \frac{k - 1}{k} + p (p + 1) k)
    \right.$ & 7\\
    \hline
    & 3 & $\frac{- 1}{24} + \frac{1}{6}  (1 + 3 p)^2$, $\frac{- 1}{24} +
    \frac{3}{32}  (1 + 4 p)^2$ & 8,9\\
    \hline
    &  & $\frac{- 1}{24} + \frac{3}{50}  (1 + 5 p)^2$, $\frac{- 1}{24} +
    \frac{3}{50}  (2 + 5 p)^2$ & 10,11\\
    \hline
    & -3 & $\frac{25}{24} - \frac{1}{6}  (1 + 3 p)^2$, $\frac{25}{24} -
    \frac{3}{32}  (1 + 4 p)^2$ & 12,13\\
    \hline
    &  & $\frac{25}{24} - \frac{3}{50}  (1 + 5 p)^2$, $\frac{25}{24} -
    \frac{3}{50}  (2 + 5 p)^2$ & 14,15\\
    \hline
    & 4 & $\frac{- 1}{8} + \frac{2}{9}  (1 + 3 p)^2$ & 16\\
    \hline
    & -4 & $\frac{9}{8} - \frac{2}{9}  (1 + 3 p)^2$ & 17\\
    \hline
    & 5 & $\frac{- 9}{40} + \frac{5}{18}  (1 + 3 p)^2$, $\frac{- 9}{40} +
    \frac{1}{10}  (2 + 5 p)^2$ & 18,19\\
    \hline
    & -5 & $\frac{49}{40} - \frac{5}{18}  (1 + 3 p)^2$, $\frac{49}{40} -
    \frac{1}{10}  (2 + 5 p)^2$ & 20,21\\
    \hline
  \end{tabular}
  \caption{\label{tableMR}}
\end{table}

Since we will now play with potential of degree $k \neq \pm 2$, the connected
component of $G_1$ will be either $G_a$ or identity. See
{\cite{moralesramis01}} and {\cite{duval08}}.

\subsection{The Yoshida transformation of $\tmop{VE}_1$}

Here we reproduce some of the computations which were more detailled in
{\cite{duval08}}. The Yoshida transformation consists of the change of the
independent variable
\begin{equation}
  t \longmapsto z = \varphi^k (t), \label{eqyoshida} \nocomma
\end{equation}
in the considered equation. Thanks to the chain rule
\[ \frac{d^2 x}{dt^2} = \left( \frac{dz}{dt} \right)^2  \frac{d^2 x}{dz^2} +
   \frac{d^2 z}{dt^2}  \frac{dx}{dz}, \]
equation (\ref{eqVElambda}) reads
\begin{eqnarray}
  \varphi^{k - 2} (t)  [ 2 kz (1 - z)  \frac{d^2 x}{dz^2} + (2 (k - 1) (1 - z)
  - kz)  \frac{dx}{dz}] & = & \frac{d^2 x}{dt^2} \nonumber\\
  \varphi^{k - 2} (t)  [ 2 kz (1 - z)  \frac{d^2 x}{dz^2} + (2 (k - 1) (1 - z)
  - kz)  \frac{dx}{dz}] & = & - \varphi^{k - 2} (t) \lambda x \nonumber\\
  2 kz (1 - z)  \frac{d^2 x}{dz^2} + (2 (k - 1) (1 - z) - kz)  \frac{dx}{dz} &
  = & - \lambda x \nonumber\\
  \frac{d^2 x}{dz^2} + \frac{2 (k - 1) (1 - z) - kz}{2 kz (1 - z)} 
  \frac{dx}{dz} & = & \frac{- \lambda x}{2 kz (1 - z)} \nonumber\\
  \frac{d^2 x}{dz^2} + p (z)  \frac{dx}{dz} & = & s (z) \lambda x, 
\end{eqnarray}
\label{eqVEz}where
\[ p (z) = \frac{2 (k - 1) (z - 1) + kz}{2 kz (z - 1)} \text{ \hspace{1em}
   \tmop{and}}  \hspace{1em} s (z) = \frac{1}{2 kz (z - 1)} . \]
Now, after the classical Tschirnhaus change of dependent variable,
\begin{equation}
  x = f (z) \zeta, \hspace{2em} f (z) = \exp \left( \frac{1}{2} \int p (z) dz
  \right) = z^{\frac{- (k - 1)}{2 k}}  (z - 1)^{\frac{- 1}{4}},
\end{equation}
\label{eqtchirn}equation (\ref{eqVEz}) has the \ reduced form
\begin{equation}
  \frac{d^2 \zeta}{dz^2} = [ r_0 (z) + \lambda s (z) ] \zeta,
\end{equation}
\label{eqVEredz}where
\[ r (z) : = r_{\lambda} ( z) : = r_0 (z) + \lambda s (z) = \frac{\rho^2 -
   1}{4 z^2} + \frac{\sigma^2 - 1}{4 (z - 1)^2} - \frac{1}{4}  (1 - \rho^2 -
   \sigma^2 + \tau^2) \left( \frac{1}{z} + \frac{1}{1 - z} \right), \]
and
\begin{equation}
  \rho = \frac{1}{k}, \hspace{2em} \sigma = \frac{1}{2}, \hspace{2em} \tau =
  \frac{\sqrt{(k - 2)^2 + 8 k \lambda}}{2 k} \label{eqdeltaexp} .
\end{equation}
Since the three above numbers are respectively the exponents differences at $z
= 0$, \ $z = 1$, and $z = \infty$, of the reduced hypergeometric equation $L_2
= x'' - r (z) x = 0$; the solutions of $L_2 = 0$, belong to the Riemann scheme
\begin{equation}
  P \{ \begin{array}{ccc}
    0 & 1 & \infty\\
    \frac{1}{2} - \frac{1}{2 k} & \frac{1}{4} & \frac{- 1 - \tau}{2}\\
    \frac{1}{2} + \frac{1}{2 k} & \frac{3}{4} & \frac{- 1 + \tau}{2}
  \end{array} z \} \label{eqRS} .
\end{equation}
In Table \ref{tableMR}, the group $G (k, \lambda)$ appearing into the first
column is precisely the differential Galois group of the equation $L_2 = x'' -
r (z) x = 0$, with respect \ to the ground field $\mathbbm{C} (z)$.

\subsection{Computation of the solution space of (\ref{eqVElambda}) when $G
(k, \lambda)^{\circ} = G_a$.}

We observed that after Yoshida transformation the new equation in $z$ variable
is $L_2 = x'' - r_{\lambda} (z) x = 0$, and that \ the solutions of $L_2 = 0$
belong to the Riemann scheme
\begin{equation}
  P_1 \assign P \{ \begin{array}{ccc}
    0 & 1 & \infty\\
    \frac{1}{2} - \frac{1}{2 k} & \frac{1}{4} & \rho_{\infty}\\
    \frac{1}{2} + \frac{1}{2 k} & \frac{3}{4} & \rho'_{\infty}
  \end{array} z \} \label{eqRS},
\end{equation}
where
\begin{equation}
  \rho_{\infty} = \frac{- 1 - \tau}{2} \nocomma \comma \hspace{2em}
  \rho_{\infty}' = \frac{- 1 + \tau}{2} \nocomma \comma \hspace{2em} \tau = p
  - \frac{1}{2} + \frac{1}{k} \label{eqtau} .
\end{equation}
In Table \ref{tableMR}, the group $G (k, \lambda)$ appearing into the first
column is precisely the differential Galois group of the equation $L_2 = x'' -
r (z) x = 0$, with respect to the ground field $\mathbbm{C} (z)$. But its
connected component coincide with the connected component of
(\ref{eqVElambda}) over $\mathbbm{C} (\varphi, \dot{\varphi})$.

Our Lemma 3.4 from {\cite{duval08}} can be reformulated into the following
way.

\begin{lemma}
  \label{lem:jordan}When $G_1^{\circ} \simeq G_a$ then,
  \begin{enumeratenumeric}
    \item Up to a complex multiplicative constant, the algebraic solution
    $x_1$ is of the form $x_1 = z^a  (z - 1)^b J (z)$ where
    \[ a \in \left\{ \frac{k - 1}{2 k}, \frac{k + 1}{2 k} \right\} \nocomma,
       \hspace{2em} b \in \left\{ \frac{1}{4}, \frac{3}{4} \right\}, \]
    \ and $J (z) \in \mathbbm{R} [z]$ does not vanish at $z \in \{0, 1\}$.
    
    \item The function $I \assign I_{\lambda} = \int \frac{1}{x_1^2}$ has
    exponent $( 1 - 2 b)$ at $z = 1$, \ and, up to an additive constant, the
    monodromy around this \ point is \ $\mathcal{M}_1 (I) = - I$.
    
    \item Around $z = 0$, $I$ has exponent $( 1 - 2 a)$, and its monodromy can
    be written into the form $\mathcal{M}_0 (I) = \exp (- 4 \pi \mathi a) I +
    c_0$.
  \end{enumeratenumeric}
\end{lemma}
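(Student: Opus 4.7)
The plan is to deduce the three assertions from the general structure of the algebraic solution of the reduced equation $L_2 = x'' - r_{\lambda}(z) x = 0$ in the case $G_1^{\circ} \simeq G_a$, combined with a direct local analysis of $I_{\lambda} = \int dz / x_1^2$ at $z = 0$ and at $z = 1$.

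For assertion 1, the idea is to invoke the Kovacic / Darboux-covariant normal-form argument already used in \cite{duval08}: when the connected component of the Galois group is $G_a$, the equation $L_2 = 0$ admits a unique (up to scalar) algebraic solution, and this solution must be locally Frobenius at each regular singular point. Given the Riemann scheme $P_1$ with exponents $\{1/2 - 1/(2k),\ 1/2 + 1/(2k)\}$ at $0$ and $\{1/4,\ 3/4\}$ at $1$, this forces $x_1(z) = z^a (z-1)^b J(z)$ with $a \in \{(k-1)/(2k),\ (k+1)/(2k)\}$, $b \in \{1/4,\ 3/4\}$, and $J$ a polynomial whose degree is pinned down by $\rho_{\infty}$. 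The non-vanishing of $J$ at $z = 0$ and at $z = 1$ is built into this normalisation: a zero of $J$ at one of these points would shift the exponent to the other member of the corresponding Frobenius pair, contradicting the initial choice of $a$ or $b$. The realness of $J$ then follows by complex conjugation: the coefficients of $L_2$ lie in $\mathbbm{R}(z)$ and $a, b$ are real, so $\overline{x_1}$ is again an algebraic solution with the same local form, hence a real multiple of $x_1$ by the uniqueness above; rescaling places $J$ in $\mathbbm{R}[z]$.

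Assertions 2 and 3 then reduce to routine local computations at each singularity. At $z = 1$, the function $x_1$ has exponent $b \in \{1/4, 3/4\}$, so $1/x_1^2$ has exponent $-2b \neq -1$ and $I_{\lambda}$ has exponent $1 - 2b$; the local monodromy multiplies $x_1$ by $e^{2\pi \mathi b}$ and therefore $1/x_1^2$ by $e^{-4\pi \mathi b}$, which equals $-1$ for both admissible values of $b$. Integrating yields $\mathcal{M}_1(I) = -I + c_1$ for some constant $c_1$; replacing $I$ by $I - c_1/2$ absorbs this constant and gives $\mathcal{M}_1(I) = -I$. The same analysis at $z = 0$, where $x_1$ has exponent $a \notin \{1/2\}$, produces exponent $1 - 2a$ for $I$ and monodromy $\mathcal{M}_0(I) = e^{-4 \pi \mathi a} I + c_0$; here the multiplicative factor is a non-trivial $k$-th root of unity, so no additive shift of $I$ can absorb $c_0$, which is exactly why a constant survives in the final formula. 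The only non-trivial ingredient I foresee is the realness in assertion 1, which rests on the uniqueness (up to scalar) of the algebraic solution when $G_1^{\circ} = G_a$; the remainder is standard Frobenius and monodromy bookkeeping on the Fuchsian equation $L_2 = 0$ over $\mathbbm{P}^1$.
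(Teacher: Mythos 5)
Your proposal is essentially correct, and it supplies in full an argument that the paper itself does not give: the paper's ``proof'' of this lemma is a one-line citation to Lemma 3.4 of \cite{duval08} for points (1) and (2), with (3) declared analogous. Your route --- uniqueness (up to scalar) of the $G_a$-fixed line in the solution space, which forces $x_1$ to be a simultaneous eigenvector of the local monodromies and hence a pure Frobenius solution $z^a(z-1)^bJ(z)$ at each finite singularity, followed by exponent and monodromy bookkeeping for $I=\int x_1^{-2}$ --- is the standard one and is surely what \cite{duval08} does; the conjugation argument for $J\in\mathbbm{R}[z]$ is legitimate since $\lambda=p+\tfrac{k}{2}p(p-1)\in\mathbbm{Z}$ in the $G_a$ case, so $r_\lambda\in\mathbbm{Q}(z)$. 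Two local slips are worth correcting, though neither damages the stated conclusions. First, a zero of $J$ at $z=0$ would raise the exponent of $x_1$ by a \emph{positive integer}, not move it to the other member of the Frobenius pair; the contradiction is that $a+m$ ($m\geqslant 1$) is not a local exponent at all, since the two exponents differ by $1/k\nin\mathbbm{Z}$. Second, your explanation of why $c_0$ survives in (3) is backwards: for a monodromy $\mathcal{M}(I)=\mu I+c$ with $\mu\neq 1$, the shift $I\mapsto I+c/(\mu-1)$ \emph{does} absorb the constant, so the nontriviality of $\exp(-4\pi\mathi a)$ would by itself allow $c_0=0$. The real reason $c_0$ must be kept is that the single additive degree of freedom in $I$ has already been spent normalising $\mathcal{M}_1(I)=-I$ at $z=1$, and one cannot normalise at both singular points simultaneously. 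Since the lemma only asserts that $\mathcal{M}_0(I)$ \emph{can be written} in the form $\exp(-4\pi\mathi a)I+c_0$, your derivation still proves exactly what is claimed.
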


\begin{proof}
  (1 and 2) where proved in Lemma 3.4 of {\cite{duval08}}, (3) follows in the
  same way.
\end{proof}

From (\ref{eqtau}) and (\ref{eqRS}) we get
\[ \rho_{\infty} = - \frac{1}{2}  \left( p + \frac{1}{2} + \frac{1}{k}
   \right) ; \hspace{2em} \rho'_{\infty} = \frac{1}{2}  \left( - \frac{3}{2} +
   \frac{1}{k} \right) . \]
Since $x_1 = z^a  (z - 1)^b J (z)$, we get that
\[ x_1 \in P_1 \assign P \{ \begin{array}{ccc}
     0 & 1 & \infty\\
     \frac{1}{2} - \frac{1}{2 k} & \frac{1}{4} & \rho_{\infty}\\
     \frac{1}{2} + \frac{1}{2 k} & \frac{3}{4} & \rho'_{\infty}
   \end{array} z \} \hspace{1em} \Longleftrightarrow \hspace{1em} J \in P_2
   \assign P \{ \begin{array}{ccc}
     0 & 1 & \infty\\
     \frac{1}{2} - \frac{1}{2 k} - a & \frac{1}{4} - b & \rho_{\infty} + a +
     b\\
     \frac{1}{2} + \frac{1}{2 k} - a & \frac{3}{4} - b & \rho'_{\infty} + a +
     b
   \end{array} z \} . \]
From the four possibilities $a \in \{ \frac{k - 1}{2 k}, \frac{k + 1}{2 k}
\}$, $b \in \{ \frac{1}{4}, \frac{3}{4} \}$, we get four possibles Riemann
schemes.

According to ({\cite{poole}} p. 95), the classical Jacobi polynomials
${J^{\ast}}_n^{(\alpha, \beta)} (t) $with parameters $(\alpha, \beta)$, and
$\tmop{degree} n \in \mathbbm{N}$, are defined by
\[ {J^{\ast}}_n^{(\alpha, \beta)} (t) = \frac{(t - 1)^{- \alpha}  (t + 1)^{-
   \beta}}{2^n n!}  \frac{d^n}{dt^n} ( (t - 1)^{\alpha + n} (t + 1)^{\beta +
   n}) . \]
They belong to the Riemann scheme
\[ P_{J^{\ast}} \left\{ \begin{array}{ccc}
     - 1 & \infty & 1\\
     0 & - n & 0\\
     - \beta & \alpha + \beta + n + 1 & - \alpha
   \end{array} t \right\} . \]
But here the singularities that we meet are $\{- 1, 1, \infty\}$ instead of
$\{0, 1, \infty\}$. Hence we pass from the classical Jacobi polynomials to
ours by putting $t = 2 z - 1 \nocomma \nocomma \nocomma$, i.e., we set
\[ J (z) = J^{\ast} (2 z - 1), \]
\[ J (z) \in P_{J^{}} \left\{ \begin{array}{ccc}
     0 & \infty & 1\\
     0 & - n & 0\\
     - \beta & \alpha + \beta + n + 1 & - \alpha
   \end{array} z \left\} \hspace{1em} \Longleftrightarrow \hspace{1em}
   J^{\ast} (t) \in P_{J^{\ast}} \left\{ \begin{array}{ccc}
     - 1 & \infty & 1\\
     0 & - n & 0\\
     - \beta & \alpha + \beta + n + 1 & - \alpha
   \end{array} t \right\} . \right. \right. \]
As a consequence, we get the following classification of the Jacobi
polynomials $J \in P_2$

\begin{table}[h]
  \begin{tabular}{|c|c|c|c|c|c|c|}
    \hline
    & $a$ & $b$ & $\alpha$ & $\beta$ & $n$ & $p$\\
    \hline
    Case 1 & $\frac{1}{2} + \frac{1}{2 k}$ & $\frac{1}{4}$ & $\frac{- 1}{2}$ &
    $\frac{1}{k}$ & $\frac{p - 1}{2}$ & $p \in 2\mathbbm{N}+ 1$\\
    \hline
    Case 2 & $\frac{1}{2} + \frac{1}{2 k}$ & $\frac{3}{4}$ & $\frac{1}{2}$ &
    $\frac{1}{k}$ & $\frac{p}{2} - 1$ & $p \in 2\mathbbm{N}+ 2$\\
    \hline
    Case 3 & $\frac{1}{2} - \frac{1}{2 k}$ & $\frac{1}{4}$ & $\frac{- 1}{2}$ &
    $\frac{- 1}{k}$ & $\frac{- p}{2}$ & $p \in - 2\mathbbm{N}$\\
    \hline
    Case 4 & $\frac{1}{2} - \frac{1}{2 k}$ & $\frac{3}{4}$ & $\frac{1}{2}$ &
    $\frac{- 1}{k}$ & $\frac{- p - 1}{2}$ & $p \in - 2\mathbbm{N}- 1$\\
    \hline
  \end{tabular}
  \caption{\label{tabjacobi}(}
\end{table}

Here, $n \assign \deg (J (z))$. To obtain this, we identified $P_2$ with
$P_J$. This gave $\alpha$ and $\beta$. To compute the degree, we observed that
$\alpha + \beta \nin \mathbbm{Z}$. Hence, $- n$ is the one of the two numbers
\ $\rho_{\infty} + a + b$, or \ $\rho'_{\infty} + a + b$ that belongs to
$\mathbbm{Z}$.

Thanks to the formula for $J^{\ast} = {J^{\ast}_n}^{(\alpha, \beta)}$ we get
that up to a constant multiple
\begin{equation}
  J (z) = J_n^{(\alpha, \beta)} (z) = (z - 1)^{- \alpha}  (z)^{- \beta} 
  \frac{d^n}{dz^n} ( (z - 1)^{\alpha + n} (z)^{\beta + n}) . \label{eqjacobi}
\end{equation}
Hence, according to the previous table and Lemma \ref{lem:jordan}, the precise
forms of the integrals $I$ depends on the four \ cases and are given by the
following

\begin{table}[h]
  \begin{tabular}{|c|c|c|c|c|}
    \hline
    Cases & 1 & 2 & 3 & 4\\
    \hline
    $I'$ & $\frac{1}{z^{1 + 1 / k}  (z - 1)^{1 / 2} J^2}$ & $\frac{1}{z^{1 + 1
    / k}  (z - 1)^{3 / 2} J^2}$ & $\frac{1}{z^{1 - 1 / k}  (z - 1)^{1 / 2}
    J^2}$ & $\frac{1}{z^{1 - 1 / k}  (z - 1)^{3 / 2} J^2}$\\
    \hline
  \end{tabular}
  \caption{\label{tab:I}}
\end{table}

\begin{remark}
  Compatibility between the above Table and Lemma is certainly true for $|k|
  \geqslant 3$ but there is a problem for $k = \pm 1$. Indeed, thanks to
  ({\cite{poole}} p.95), the differential equation for $J (z) =
  J_n^{(\alpha, \beta)} (z)$ is
  \[ z (1 - z) J'' + [\beta + 1 - (\alpha + \beta + 2) z] J' + n (\alpha +
     \beta + n + 1) J = 0. \]
  So by plugging $z = 0, 1$ in the later we get
  \[ \left\{ \begin{array}{l}
       (\beta + 1) J' (0) + n (\alpha + \beta + n + 1) J (0) = 0\\
       - (\alpha + 1) J' (1) + n (\alpha + \beta + n + 1) J (1) = 0
     \end{array} \right. . \]
  From Table \ref{tabjacobi}, $\alpha = \pm 1 / 2$ and $\beta = \pm 1 / k$ so
  $n (\alpha + \beta + n + 1) \neq 0$. As a consequence, $J (1) \neq 0$. But
  when $\beta = - 1$ that is when $k = \pm 1$, we get that $J (0) = 0$. Which
  is not compatible with the Lemma \ref{lem:jordan}.
\end{remark}

This is why, in this paper, we are going to work with the assumption that $|k|
\geqslant 3$, which is in fact complementary to what was done in
{\cite{combot12}}.

\subsection{Yoshida transformation of $\tmop{VE}_{2, \alpha}^{\gamma}$ and
$\tmop{EX}_{2, \alpha, \beta}^{\gamma}$.}

With the above notations we get the following

\begin{proposition}
  By setting $x = f (z) \zeta$, the differential equation in time $t$:
  \[ \ddot{x} = - \lambda \varphi^{k - 2} x + \varphi^{k - 3} b, \]
  is transformed into the differential equation in $z$ variable
  \[ \zeta'' = r_{\lambda} (z) \zeta - \frac{s (z)}{f (z) z^{1 / k}} b \]
\end{proposition}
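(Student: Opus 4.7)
The plan is to recycle the Yoshida + Tschirnhaus pipeline from the previous subsection and track only what happens to the new forcing term $\varphi^{k-3} b$. Since both changes of variable are linear operations on the unknown, they commute with the addition of a source, so the homogeneous part of the output is automatically the operator $\zeta'' - r_\lambda(z)\zeta$ already produced in (\ref{eqVEz})--(\ref{eqVEredz}); nothing new happens on the left-hand side, and the only real work is to follow the inhomogeneity through the two substitutions.

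First I would redo the chain-rule computation of the previous subsection verbatim, so that the equation becomes
\[
\varphi^{k-2}\bigl[2kz(1-z)\,x'' + (2(k-1)(1-z)-kz)\,x'\bigr] \;=\; -\lambda\,\varphi^{k-2} x + \varphi^{k-3} b .
\]
Dividing by $\varphi^{k-2}$ isolates the source as $\varphi^{-1} b$, which the Yoshida identity $z = \varphi^{k}$ converts into $z^{-1/k}b$. A further division by $2kz(1-z)$ puts the equation in the normal form $x'' + p(z)x' = \lambda s(z)x + (\text{source})$, and since $1/[2kz(1-z)] = -s(z)$ the source becomes $-s(z)\,z^{-1/k}\,b$.

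Finally I would apply the Tschirnhaus substitution $x = f(z)\zeta$ with $f$ as in (\ref{eqtchirn}). By the very construction of $f$, the first-order term in $\zeta'$ cancels and the coefficient of $\zeta$ on the homogeneous left-hand side combines with $\lambda s(z)$ to produce $r_\lambda(z)$, exactly as in the reduction leading to (\ref{eqVEredz}). Dividing by $f(z)$ on both sides transports the source to $-s(z)b/(f(z)\,z^{1/k})$, yielding the claimed equation.

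The only real obstacle here is careful sign bookkeeping, in particular distinguishing the factor $2kz(1-z)$ that arises naturally from the chain rule from the convention $s(z) = 1/[2kz(z-1)]$ used elsewhere in the paper; there is no new structural ingredient to uncover beyond the observation that the Tschirnhaus change is a multiplicative rescaling of $x$, so inhomogeneous terms are simply divided by $f(z)$.
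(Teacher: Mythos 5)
Your proposal is correct and follows the same route as the paper: divide the time equation by $\varphi^{k-2}$, use the Yoshida relations to rewrite it as $x'' + p(z)x' = \lambda s(z)x - s(z)z^{-1/k}b$, then apply the Tschirnhaus substitution $x = f(z)\zeta$ and divide by $f$, which reproduces the reduced homogeneous operator and carries the source to $-s(z)b/(f(z)z^{1/k})$. The paper's proof is just a terser version of the same computation, likewise invoking the earlier reduction for the homogeneous part and only tracking the inhomogeneity.
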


\begin{proof}
  According to Yoshida, by dividing the original equation in time by
  $\varphi^{k - 2}$ and multiplying by $- s (z)$ we get
  \[ x'' + p (z) x' = \lambda s (z) x - \frac{s (z) b}{\varphi} . \]
  Now, the relation $x = f (z) \zeta$, gives $x'' + p (z) x' = f (z) \zeta'' +
  (f'' + pf') \zeta$, hence by dividing the previous equation by $f$ and using
  {\cite{duval08}} again, we get
  \begin{eqnarray*}
    \zeta'' - r_0 (z) \zeta & = & \frac{\lambda s (z)}{f} f \zeta - \frac{s
    (z) b}{f (z) \varphi}\\
    \zeta'' & = & r_{\lambda} (z) \zeta - \frac{s (z) b}{f (z) z^{1 / k}}
  \end{eqnarray*}
\end{proof}

Let us set
\[ \omega = z^{- ( \frac{3}{2} + \frac{1}{2 k})}  (z - 1)^{\frac{- 5}{4}} .
\]
The field $\mathbbm{C} (z) [\omega]$ is a finite Abelian extension of
$\mathbbm{C} (z)$.

\begin{corollary}
  \label{cor:ve2z}The original systems in time $\tmop{VE}_{2, \alpha}^{\gamma}
  $ and $\tmop{EX}_{2, \alpha, \beta}^{\gamma}$ are equivalent to the
  following systems in z variable that we still denote by the same symbols:
  \[ \tmop{VE}_{2, \alpha}^{\gamma} \hspace{1em} \left\{ \begin{array}{l}
       x'' = r_{\alpha} x,\\
       y'' = r_{\gamma} y + \omega x^2,
     \end{array} \right. \]
  
  \[ \tmop{EX}_{2, \alpha, \beta}^{\gamma} \hspace{1em} \left\{
     \begin{array}{l}
       x'' = r_{\alpha} x,\\
       y'' = r_{\beta} y,\\
       u'' = r_{\gamma} u + \omega xy,
     \end{array} \right. \]
  The coefficients of these systems are elements of $K_0 \assign \mathbbm{C}
  (z) [\omega]$. Here, in order to simplify notations we set:
  \[ r_{\alpha} = r_{\lambda_{\alpha}} (z) = r_0 (z) + \lambda_{\alpha} s (z)
     . \]
\end{corollary}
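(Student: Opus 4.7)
The plan is to apply the preceding Proposition to each dependent variable of the two systems, using in each case the Tschirnhaus substitution $x=f(z)\zeta$ (and likewise for $y$ and $u$). The three homogeneous equations (the first line of $\tmop{VE}_{2,\alpha}^{\gamma}$ and the first two lines of $\tmop{EX}_{2,\alpha,\beta}^{\gamma}$) fall under the Proposition with $b=0$ and give immediately, after renaming $\zeta\mapsto x$ (resp.\ $y$), the reduced equations $x''=r_{\alpha}x$ and $y''=r_{\beta}y$.

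The substantive step is the transformation of the inhomogeneous equations. I would apply the Proposition with $b=x^{2}$ (for $\tmop{VE}_{2,\alpha}^{\gamma}$) and with $b=xy$ (for $\tmop{EX}_{2,\alpha,\beta}^{\gamma}$). Because the very same Tschirnhaus substitution was used on the homogeneous equations, the symbols $x,y$ occurring inside $b$ must now be expressed in the new dependent variables, introducing a factor $f(z)^{2}$. Hence the inhomogeneous coefficient delivered by the Proposition becomes $-\frac{s(z)f(z)^{2}}{f(z)z^{1/k}}=-\frac{s(z)f(z)}{z^{1/k}}$. A direct bookkeeping of exponents using $s(z)=\frac{1}{2kz(z-1)}$ and $f(z)=z^{-(k-1)/(2k)}(z-1)^{-1/4}$ gives
\[
\frac{s(z)f(z)}{z^{1/k}} \,=\, \frac{1}{2k}\,z^{-(3/2+1/(2k))}(z-1)^{-5/4} \,=\, \frac{1}{2k}\,\omega,
\]
so the inhomogeneous coefficient equals $-\omega/(2k)$ times the square in the new variables. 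The harmless scalar $-1/(2k)$ is then absorbed by the rescaling $y\mapsto -y/(2k)$ (respectively $u\mapsto -u/(2k)$), which leaves the equations in exactly the form stated.

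It remains to check that all coefficients lie in $K_{0}=\mathbbm{C}(z)[\omega]$. The diagonal coefficients $r_{\alpha},r_{\beta},r_{\gamma}$ are elements of $\mathbbm{C}(z)\subseteq K_{0}$, and the inhomogeneous coefficient is $\omega$ itself; the finite abelianity of $K_{0}/\mathbbm{C}(z)$ already asserted in the text follows from the Kummer-type relation $\omega^{4k}=z^{-(6k+2)}(z-1)^{-5k}$. The only real obstacle in the whole argument is the bookkeeping of the rational exponents: once one verifies the identities $-(k-1)/(2k)-1-1/k=-(3/2+1/(2k))$ and $-1/4-1=-5/4$, the Corollary reduces entirely to the preceding Proposition applied componentwise.
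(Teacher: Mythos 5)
Your proposal is correct and follows essentially the same route as the paper: apply the preceding Proposition componentwise via the Tschirnhaus substitution, observe that the quadratic source term contributes a factor $f(z)^{2}$, verify the exponent identity $\frac{s(z)f(z)}{z^{1/k}}=\frac{\omega}{2k}$, and absorb the constant $-\frac{1}{2k}$ by a linear rescaling. The only (immaterial) difference is that you rescale the target variable ($y\mapsto -y/(2k)$, resp.\ $u\mapsto -u/(2k)$) whereas the paper rescales the source variable ($X=-2kx$); both yield equivalent systems with the stated coefficients in $K_{0}=\mathbbm{C}(z)[\omega]$.
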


\begin{proof}
  In $\tmop{EX}_{2, \alpha, \beta}^{\gamma}$, let's perform the Tschirnhaus
  transformation by setting
  \[ x (t) \assign f (z) X (z) ; \hspace{2em} y (t) : = f (z) Y (z) ;
     \hspace{2em} u (t) \assign f (z) U (z) . \]
  According to the above proposition, the system in time is therefore
  equivalent to
  \[ \tmop{EX}_{2, \alpha, \beta}^{\gamma} \hspace{1em} \left\{
     \begin{array}{l}
       X'' = r_{\alpha} X\\
       X'' = r_{\beta} Y\\
       U'' = r_{\gamma} U - \frac{s (z) }{f (z) z^{1 / k}} xy
     \end{array} \right. . \]
  But now,
  \[ \frac{s (z) }{f (z) z^{1 / k}} xy = \frac{s (z) }{f (z) z^{1 / k}} f^2
     XY = \frac{s (z) }{z^{1 / k}} f (z) XY. \]
  Moreover direct computation gives
  \[ \frac{s (z) }{z^{1 / k}} f (z) = \frac{\omega}{2 k} . \]
  Hence, the last equation of $\tmop{EX}_{2, \alpha, \beta}^{\gamma}$ becomes
  \[ U'' = r_{\gamma} U - \frac{\omega}{2 k} XY. \]
  Coming back to minuscule letter by setting
  \[ X = - 2 kx ; \hspace{2em} Y = y ; \hspace{2em} U = u, \]
  we get the desired expression of $\tmop{EX}_{2, \alpha, \beta}^{\gamma}$
  into $z$ variable. Similar computations hold for $\tmop{VE}^{\gamma}_{2,
  \alpha}$.
\end{proof}

\section{Second level integrals and hierarchy}

\subsection{The second level integral involved in $\tmop{VE}_{2,
\alpha}^{\gamma} $ and $\tmop{EX}_{2, \alpha, \beta}^{\gamma}$}

From now on, we will only work with differential equation in $z$ variable,
with coefficient in $K_0 =\mathbbm{C} (z) [\omega]$.

\begin{proposition}
  \label{prop:int}Let $F$ be a differential field containing the elements $r$,
  $\omega$, $b$, and a basis of solutions $\{u_1 ; u_2 \}$ of $u'' = ru$.
  Then, the field $F$ contains all solutions of the differential equation
  \[ u'' = ru + \omega b, \]
  iff it contains \ the two following integrals
  \[ \Phi_i \assign \int \omega u_i b \nocomma \nocomma, \hspace{1em} i = 1,
     \nocomma 2. \]
\end{proposition}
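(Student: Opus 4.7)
The plan is to use the classical method of variation of parameters. The key preliminary observation is that the equation $u'' = ru$ has no first order term, so if $W \assign u_1 u_2' - u_1' u_2$ denotes the Wronskian of the basis, a direct differentiation shows $W' = 0$. Thus $W$ is a nonzero constant, in particular $W \in F$ and $1/W \in F$.

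The forward direction then proceeds as follows. Assume $\Phi_1, \Phi_2 \in F$. Seek a particular solution of $u'' = ru + \omega b$ under the form $v = c_1 u_1 + c_2 u_2$ subject to the constraint $c_1' u_1 + c_2' u_2 = 0$. Substitution into the inhomogeneous equation and elimination via the constraint yields $c_1' = -\omega u_2 b / W$ and $c_2' = \omega u_1 b / W$, hence $c_1 = -\Phi_2 / W$ and $c_2 = \Phi_1 / W$ up to additive constants. Consequently $v = (u_2 \Phi_1 - u_1 \Phi_2)/W$ lies in $F$, and every other solution of the inhomogeneous equation is obtained by adding a $\mathbb{C}$-linear combination of $u_1$ and $u_2$, which stays inside $F$.

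For the converse, assume that $F$ contains all solutions of $u'' = ru + \omega b$, and pick one such $v \in F$. Writing $v = c_1 u_1 + c_2 u_2$ and $v' = c_1 u_1' + c_2 u_2'$ (the latter using the variation of parameters constraint), Cramer's rule gives
\[
c_1 = \frac{v u_2' - v' u_2}{W}, \qquad c_2 = \frac{v' u_1 - v u_1'}{W}.
\]
Both expressions lie in $F$ because $v, v', u_1, u_1', u_2, u_2', W$ do. Differentiating and comparing with the relations $c_1' = -\omega u_2 b / W$ and $c_2' = \omega u_1 b / W$ obtained above, we conclude that $\Phi_1 = W c_2$ and $\Phi_2 = -W c_1$ up to constants, so both integrals belong to $F$.

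No genuine obstacle is expected; the only technical point to keep in mind is the constancy of the Wronskian, which makes the inversion in step three exact (no integration factor is needed) and ensures that $W^{-1} \in F$ so one can freely divide. The argument is formally identical to the classical real-analytic variation of parameters, transported into the differential-algebraic setting of Picard-Vessiot extensions.
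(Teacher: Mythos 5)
Your proof is correct and is exactly the argument the paper intends: its entire proof reads ``Classical variation of constant,'' and your write-up simply supplies the standard details (constancy of the Wronskian, the Cramer formulas for $c_1', c_2'$, and the converse recovery of $\Phi_1, \Phi_2$ from a particular solution). Nothing to add.
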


\begin{proof}
  Classical variation of constant.
\end{proof}

Let us \ denote by $F_1 / K_0$ the \ Picard-Vessiot extension associated to
the homogeneous part of one of the systems $S \assign \tmop{VE}_{2,
\alpha}^{\gamma}$, or $S \assign \tmop{EX}_{2, \alpha, \beta}^{\gamma}$. Let
also denote by $F_2 / K_0$ the Picard-Vessiot of $S$. From the above
\begin{itemizedot}
  \item For $S = \tmop{VE}_{2, \alpha}^{\gamma}$, $F_2 / F_1$ is generated by
  the four integrals $\Phi = \int \omega yx^2$. That is, in term of basis
  $\{x_1 ; x_2 \}$ and $\{y_1 ; y_2 \}$ of the corresponding homogeneous
  equations, we have at most six generators, given by the integrals
  \[ \Phi : = \int \omega y_i X, \hspace{1em} i = 1 ; 2. \]
  where $X \in \{x_1^2 \text{,} x_1 x_2 \text{,} x_2^2 \} .$
  
  \item For $S = \tmop{EX}_{2, \alpha, \beta}^{\gamma}$, $F_2 / F_1$ is
  generated by at most eight integrals $\Phi_{i, j, l} \assign \int \omega u_i
  y_j x_l$, with $i, j, l = 1, 2.$
\end{itemizedot}
These systems are complicated ones, nevertheless we get the following
elimination result

\begin{theorem}
  \label{th:elimination} If the groups $G_{\alpha}, G_{\beta}, G_{\gamma}$ are
  finite then the Picard-Vessiot extensions of $\tmop{VE}_{2,
  \alpha}^{\gamma}$, and $\tmop{EX}_{2, \alpha, \beta}^{\gamma}$ are virtually
  Abelian.
\end{theorem}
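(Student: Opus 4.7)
The plan is to exploit the two-step tower of Picard-Vessiot extensions $K_0 \subset F_1 \subset F_2$ associated to these systems, where $F_1$ is the PV extension of the homogeneous part (direct sum of the equations $x''=r_\alpha x$, $y''=r_\beta y$, $u''=r_\gamma u$) and $F_2$ is the full PV extension of the non-homogeneous system. The strategy is to show that under the finiteness assumption, $F_1/K_0$ is a finite algebraic extension, and $F_2/F_1$ is generated by integrals of algebraic functions; standard Kolchin/Galois theory then forces the connected component of $\mathrm{Gal}(F_2/K_0)$ to be abelian.

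First I would observe that if $G_\alpha,G_\beta,G_\gamma$ are finite, then since $K_0=\mathbb{C}(z)[\omega]$ is itself a finite (abelian) algebraic extension of $\mathbb{C}(z)$, the Galois groups of $x''=r_\alpha x$, etc., over $K_0$ remain finite. Hence the full bases $\{x_1,x_2\},\{y_1,y_2\},\{u_1,u_2\}$ are algebraic over $K_0$, so $F_1/K_0$ is a finite algebraic (in fact Galois) extension with Galois group a finite subgroup of $G_\alpha\times G_\beta\times G_\gamma$.

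Next I would apply Proposition \ref{prop:int} to each of the non-homogeneous equations appearing in Corollary \ref{cor:ve2z}. This shows that $F_2$ is obtained from $F_1$ by adjoining finitely many integrals of the form $\int \omega\, u_i y_j x_l$ (for $\tmop{EX}_{2,\alpha,\beta}^{\gamma}$) or $\int \omega\, y_i X$ with $X\in\{x_1^2,x_1x_2,x_2^2\}$ (for $\tmop{VE}_{2,\alpha}^{\gamma}$). Because each integrand is a product of elements algebraic over $K_0$, each integrand lies in $F_1$, so $F_2$ is a no-new-constants extension generated by primitives of elements of $F_1$. By the standard description of such extensions (they are iterated $\int$-extensions), $\mathrm{Gal}(F_2/F_1)$ is a linear algebraic subgroup of some power $\mathbb{G}_a^N$, and in particular it is connected and abelian.

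Finally I would run the usual tower argument: the quotient $\mathrm{Gal}(F_2/K_0)/\mathrm{Gal}(F_2/F_1)$ injects into $\mathrm{Gal}(F_1/K_0)$, which is finite by step~1. Hence $\mathrm{Gal}(F_2/F_1)$ has finite index in $\mathrm{Gal}(F_2/K_0)$, forcing the connected component $\mathrm{Gal}(F_2/K_0)^{\circ}$ to be contained in $\mathrm{Gal}(F_2/F_1)$, which is abelian. Therefore $\mathrm{Gal}(F_2/K_0)$ is virtually abelian, proving the theorem. The argument is essentially formal; the only subtlety worth checking carefully is that the integrals really do live over $F_1$ (i.e.\ that the Wronskians used in the variation-of-constants formula of Proposition \ref{prop:int} are already in $F_1$), and that passing between $\mathbb{C}(z)$ and $K_0$ does not enlarge the three component Galois groups out of the finite range, both of which are routine once the setup is in place.
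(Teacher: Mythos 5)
Your proposal is correct and follows essentially the same route as the paper: the authors likewise note that $F_1/K_0$ is algebraic under the finiteness hypothesis and that $F_2/F_1$ is generated by first-level integrals (primitives of elements of $F_1$, via Proposition \ref{prop:int}), so that $\tmop{Gal}^{\circ}(F_2/K_0)$ is a vector group, hence Abelian. Your write-up merely makes explicit the tower argument and the embedding of $\tmop{Gal}(F_2/F_1)$ into a power of $G_a$, which the paper leaves implicit.
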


\begin{proof}
  In these cases $F_1 / K_0$ is algebraic. It is also the algebraic closure of
  $K$ in $F_2$. Since $F_2 / F_1$ is generated by first level integral with
  respect \ to $F_1$, $\tmop{Gal}^{\circ} ( F_2 / K_0)$ is a vector group.
\end{proof}

{\tmstrong{From now, since the casuistic is sufficiently tremendous, with
those cases only, we shall assume up to the end of this paper that }}
\[ G_{\alpha}^{\circ} \simeq G_{\beta}^{\circ} \simeq G_{\gamma}^{\circ}
   \simeq G_a . \]
\subsection{Hierarchy of the integrals involved in $\tmop{VE}_{2,
\alpha}^{\gamma} $ and \ $\tmop{EX}_{2, \alpha,
\beta}^{\gamma}$.}\label{sec:hierarchy}

Since in the considered cases the group $G_{\lambda}$ of $w'' = r_{\lambda} w$
has the connected component $G_a$, the equation always have an algebraic
solution that we shall always denote by an index one, i.e., $w_1$. The second
solution is given by
\[ w_2 = w_1 I_{\lambda} \text{\tmop{with}} I = I_{\lambda} = \int
   \frac{1}{w_1^2} . \]
With this convention the second level integral involved in $\tmop{VE}_{2,
\alpha}^{\gamma}$ can be classify in following diagram
\[ \begin{array}{cc}
     \Phi = \int \omega y_1 x^2_1, & \\
     \Phi_{\gamma} = \int \omega y_2 x^2_1 = \int \Phi' I_{\gamma}, &
     \Phi_{\alpha} = \int \omega y_1 x_1 x_2 = \int \Phi' I_{\alpha},\\
     \Phi_{\gamma, \alpha} = \int \omega y_2 x_1 x_2 = \int \Phi' I_{\gamma}
     I_{\alpha}, & \Phi_{2 \alpha} = \int \omega y_1 x^2_2 = \int \Phi'
     I^2_{\alpha},\\
     & \Phi_{\gamma, 2 \alpha} = \int \omega y_2 x^2_2 = \int \Phi'
     I_{\gamma} I^2_{\alpha}
   \end{array} \]
Observe that $\Phi$ is a first level integral with respect to \ the algebraic
extension $K$ of $\mathbbm{C} (z)$, with
\[ K \assign \mathbbm{C} ( z) [ \omega ; y_1 ; x_1] = K_0 [ y_1 ; x_1] . \]
The remaining five integrals, are of second level with respect to $K$. Their
complexity grow at each change of line in the diagram.

For $\tmop{EX}_{2, \alpha, \beta}^{\gamma}$, we proceed similarly. Here, $\Phi
= \int \omega u_1 y_1 x_1$ is first level with respect to
\[ K \assign \mathbbm{C} (z) [\omega ; u_1 ; y_1 ; x_1] = K_0 [ u_1 ; y_1 ;
   x_1] . \]
The diagram of complexity is now the following
\[ \begin{array}{ccc}
     & \Phi = \int \omega u_1 y_1 x_1, & \\
     \Phi_{\gamma} = \int \Phi' I_{\gamma}, & \Phi_{\beta} = \int \Phi'
     I_{\beta}, & \Phi_{\alpha} = \int \Phi' I_{\alpha},\\
     \Phi_{\gamma, \beta} = \int \Phi' I_{\gamma} I_{\beta}, & \Phi_{\gamma,
     \alpha} = \int \Phi' I_{\gamma} I_{\alpha}, & \Phi_{\beta, \alpha} = \int
     \Phi' I_{\beta} I_{\alpha},\\
     & \Phi_{\gamma, \beta, \alpha} = \int \Phi' I_{\gamma} I_{\beta}
     I_{\alpha}, & 
   \end{array} \]

\section{Ingredients and tools}

\subsection{A cohomological argument to decide between solvability and
Abelianity}

Here we recall one important result stated and proved in the first part of
this paper (Theorem 3.1 of {\cite{duval12}}). And we add to it some
refinements which are going to be useful for effective testing of the virtual
Abelianity.

\begin{theorem}
  \label{thliouville} Let $F_1 / K$ and $F_2 / K$ be two Picard-Vessiot
  extensions with $F_1 \subset F_2$ and $F_2 / F_1$ generated by integral of
  second level. Then $G_2$ is virtually Abelian iff $G_1$ is virtually
  Abelian, and any second level integrals $\Phi \in F_2$ can be expanded into
  the form
  \[ \Phi = R_1 + J, \]
  where $R_1 \in T (F_1 / K)$ and $J' \in \tilde{K}$. Here $\tilde{K}$ is the
  algebraic closure of $K$ in $F_2$. Moreover, for all $\sigma \in
  G_2^{\circ}$, $\sigma (\Phi) - \Phi \in T (F_1 / K)$.
\end{theorem}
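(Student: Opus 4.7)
The plan is to analyse the short exact sequence of algebraic groups
\[
1 \longrightarrow H \longrightarrow G_2 \longrightarrow G_1 \longrightarrow 1,
\]
in which $H = \tmop{Gal}(F_2/F_1)$ is a vector group (isomorphic to $\mathbbm{G}_a^n$) precisely because $F_2/F_1$ is generated by second level integrals, so $H$ is unipotent and abelian. Passing to connected components gives
\[
1 \longrightarrow H^{\circ} \longrightarrow G_2^{\circ} \longrightarrow G_1^{\circ} \longrightarrow 1 .
\]
One direction is immediate: since $G_1^{\circ}$ is a quotient of $G_2^{\circ}$, virtual Abelianity of $G_2$ forces that of $G_1$.

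For the converse, I would assume $G_1^{\circ}$ abelian and study the $G_1^{\circ}$-action on $H^{\circ}$ inherited from conjugation in $G_2$. A direct check on any integral generator $\Phi$ of $F_2/F_1$ (using that $\tau(\Phi)-\Phi$ is a constant for $\tau \in H$) shows that this outer action is trivial, so $G_2^{\circ}$ is a central extension of $G_1^{\circ}$ by $H^{\circ}$. Its Abelianness is then equivalent to the vanishing of the cocycle class $c_{\Phi}:\sigma \mapsto \sigma(\Phi)-\Phi$ modulo coboundaries, for every second level generator $\Phi$. The crucial computation is that $(\sigma(\Phi)-\Phi)' = \sigma(\Phi')-\Phi'$ lies in $F_1$, and once $G_1^{\circ}$ is assumed abelian, this automatically forces $\sigma(\Phi)-\Phi$ into $T(F_1/K)$, giving the last assertion of the theorem.

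The decomposition $\Phi = R_1 + J$ is then the explicit translation of the cocycle being a coboundary: one picks $R_1 \in T(F_1/K)$ whose Galois displacement matches $c_{\Phi}$, so that $J \assign \Phi - R_1$ becomes $G_2^{\circ}$-invariant modulo constants. Invariance of $J'$ under $G_2^{\circ}$ forces $J'$ into the fixed field, which by the Galois correspondence is exactly $\tilde K$, the algebraic closure of $K$ inside $F_2$. Conversely, if every generator $\Phi$ admits such a decomposition, the commutator subgroup $[G_2^{\circ},G_2^{\circ}]$ fixes the $R_1$-parts (they sit in $T(F_1/K)$, on which $G_1^{\circ}$ acts Abelianly) and fixes the $J$-parts (whose derivatives are algebraic over $K$), hence fixes $\Phi$ up to constants, and so $G_2^{\circ}$ is abelian.

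The main obstacle I anticipate is twofold. First, one must verify that the two conditions $R_1 \in T(F_1/K)$ and $J' \in \tilde K$ can be realised simultaneously; this is a genuine constraint because $\tilde K$ and $F_1$ interact nontrivially inside $F_2$, and the coboundary correction must be chosen within the correct subspace. Second, the refinement over Theorem 3.1 of \cite{duval12} lies precisely in the sharpness of the inclusion $\sigma(\Phi)-\Phi \in T(F_1/K)$, which is what makes the cohomological argument operational for the concrete testing in Sections 6 and 7; establishing this sharpness requires a careful tracking of the $G_1^{\circ}$-orbit of $\Phi'$ rather than the generic cohomological vanishing statement.
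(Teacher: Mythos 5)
First, a point of reference: this paper does not prove Theorem~\ref{thliouville} at all --- it is recalled from Theorem~3.1 of \cite{duval12} --- so there is no in-paper proof to compare yours with, and your attempt must be judged on its own. The easy parts are fine: the exact sequence $1\to H\to G_2\to G_1\to 1$ with $H=\tmop{Gal}(F_2/F_1)$ a vector group, the implication ($G_2$ virtually Abelian $\Rightarrow$ $G_1$ virtually Abelian), and the converse direction, where the decomposition $\Phi=R_1+J$ really does force every commutator of $G_2^{\circ}$ to fix $F_1$ and each generator $\Phi$ (it fixes $R_1$ because the action on $T(F_1/K)$ factors through the Abelian $G_1^{\circ}$, and fixes $J$ because $\sigma(J)-J$ is a constant), hence to be trivial. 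The forward direction, however, has two genuine gaps, and they sit exactly where the content of the theorem lies.

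First, the assertion that the conjugation action of $G_1^{\circ}$ on $H^{\circ}$ is trivial ``by a direct check'' is unjustified and is not automatic: for $\sigma\in G_2$ one has $\sigma^{-1}(\Phi_i)=\sum_j a_{ij}\Phi_j+f_i$ with $a_{ij}\in C$ and $f_i\in F_1$ (these are the only elements translated by constants under $H$), and conjugation acts on $H\subset C^n$ through the matrix $(a_{ij})$, which need not be the identity. Centrality is a \emph{necessary} condition for Abelianity of $G_2^{\circ}$ that must be derived, not a free input. Second, and more seriously, the claim that $(\sigma(\Phi)-\Phi)'\in F_1$ together with Abelianity of $G_1^{\circ}$ ``automatically forces'' $\sigma(\Phi)-\Phi\in T(F_1/K)$ is false as stated: an element whose derivative lies in $F_1$ is a priori just another second level integral and need not lie in $F_1$ at all, let alone in the Picard--Vessiot ring. (Take $F_1=K(I)$ with $I'=g\in K$ and $\Phi=\int hI$ with $h\in K$; then $\sigma(\Phi)-\Phi=c\int h+d$, which escapes $F_1$ whenever $\int h$ does, even though $G_1^{\circ}\simeq G_a$ is Abelian.) The correct logic runs the other way: \emph{assuming} $G_2^{\circ}$ Abelian, one commutes $\sigma$ with $\tau\in H$ to get $\tau(\sigma(\Phi)-\Phi)=\sigma(\Phi)-\Phi$, hence $\sigma(\Phi)-\Phi\in F_2^{H}=F_1$, and a further argument is then needed to land in $T(F_1/K)$ rather than in its fraction field. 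Your sketch instead uses this membership as an ingredient in establishing Abelianity, which is circular; likewise, the equivalence between ``$G_2^{\circ}$ Abelian'' and ``the cocycle $\sigma\mapsto\sigma(\Phi)-\Phi$ is a coboundary'' is precisely the nontrivial statement that the theorem and the cohomological machinery of Section~4 are designed to establish, and cannot be invoked as known.
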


Here we recall that $T (F_1 / K) $ stands for the Picard-Vessiot ring of the
extension $F_1 / K$.

Let $K \subset F_1 \subset F_2$ be a tower a Picard-Vessiot extensions
satisfying the assumptions of Theorem \ref{thliouville}, with $F_1 / K$
virtually Abelian. Let $\Phi \in F_2$ be a second level integral. If $G_2$ is
virtually Abelian, the mapping
\[ \sigma \longmapsto \mathcal{C} (\sigma) \assign \sigma (\Phi) - \Phi, \]
is a {\tmem{cocycle}} from $G_2^{\circ}$ with values in the
$G_2^{\circ}$-module $M = T (F_1 / K)$. Indeed, $\mathcal{C}$ is a cocycle
since it satisfies the relation
\[ \mathcal{C} (s \sigma) = s \cdot \mathcal{C} (\sigma) +\mathcal{C} (s) . \]
Now, let $R \in M = T (F_1 / K)$, and $h$ be an arbitrary mapping from
$G_2^{\circ}$ to $\tmop{the} \tmop{field} \tmop{of} \tmop{constats} C$, we say
that the mapping
\[ \mathcal{B}: G_2^{\circ} \rightarrow M ; \hspace{2em} \sigma \mapsto
   \sigma (R) - R + h (\sigma), \]
is an {\tmem{extended coboundary}} from $G_2^{\circ}$ with values in $M$. When
moreover, $h \in \tmop{Hom} (G_2^{\circ}, C)$, then we say that $\mathcal{B}$
is a {\tmem{coboundary}}. Direct computation shows that an extended coboundary
$\mathcal{B}$ is a cocycle iff $\mathcal{B}$ is a coboundary.

In this language, Theorem \ref{thliouville}, says that if $G_2^{\circ}$ is
Abelian, then any cocycle associated to a second level integral is a
coboundary. We may wonder about a converse. Let $\Phi$ be a second level
integral such that there exists $R \in T (F_1 / K),$ and some function $h :
G_2^{\circ} \rightarrow C$, (here we do not assume a priory that $h$ is
additive), satisfying
\[ \sigma (\Phi) - \Phi = \sigma (R) - R + h (\sigma), \hspace{2em} \forall
   \sigma \in G_2^{\circ} . \]
This relation yield the following implications
\begin{eqnarray*}
  \sigma (\Phi - R) & = & \Phi - R + h (\sigma)\\
  \sigma (\Phi' - R') & = & \Phi' - R'\\
  \Phi' - R' & \in & \bar{K} .
\end{eqnarray*}
Hence, there exists $J$ such that $J'$ is algebraic over $K$, and \ $\Phi = R
+ J$. As a consequence, we proved that the cocycle associated to a second
level integral $\Phi$ is a coboundary iff $\Phi$ can be computed in closed
form.

Let us conclude this sub-section by explaining how we will use these
observations. \ For any given tower of Picard-Vessiot extensions and an
explicate second level integral $\Phi$, the fact that the associated cocycle
$\mathcal{C} (\sigma) = \sigma (\Phi) - \Phi$ belong to $T (F_1 / K)$, will
give us necessary conditions for the virtual Abelianity of $G_2$. Next, thanks
to the previous arguments, we will find sufficient conditions, showing that
the cocycles are coboundaries.

\subsection{Ostrowski relations and necessary conditions for virtual
Abelianity}

The following lemma was also stated into the first part of this paper as a
consequence of Theorem \ref{thliouville}, (see Lemma 3.3 of {\cite{duval12}}).
But we state it again in a more appropriated version to the present context.
Moreover, we prove it again since its proof is better understandable into the
framework of the cohomological arguments.

\begin{lemma}
  \label{lem:fund}Let $K \subset F_1 \subset F_2$ be a tower of Picard-Vessiot
  extensions of $K$, with the same field of constants $C$, and satisfying the
  following conditions
  \begin{itemizedot}
    \item $G_1^{\circ} = G_a$, $T (F_1 / K) = K [I]$, \ and $I' \in K$,
    
    \item $F_2$ contains some second level integrals of the form $\Phi_1
    \assign \int \Phi' I$, with $\Phi' \in K,$ and $\Phi \in F_2$.
  \end{itemizedot}
  If $F_2 / K$ is virtually Abelian, then $\Phi$ and $I$ satisfies Ostrowski
  relation of the form
  \[ \Phi - dI \in K, \text{\tmop{for} \tmop{some}} d \in C. \]
  Conversely, if such a relation holds then the extension $K (I, \Phi_1) / K$
  is virtually Abelian.
\end{lemma}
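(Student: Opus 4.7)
The plan is to apply Theorem \ref{thliouville} to the second level integral $\Phi_1 = \int \Phi' I$ for the forward direction, and to integrate $\Phi_1$ explicitly by parts for the converse. The key structural fact exploited throughout is that $I$ is transcendental over $K$ (since $G_1^\circ \simeq G_a$ is infinite), so $F_1 = K(I)$ and $T(F_1/K) = K[I]$ is a polynomial ring in the single transcendental $I$.

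Assuming $F_2/K$ virtually Abelian, Theorem \ref{thliouville} furnishes the decomposition $\Phi_1 = R_1 + J$ with $R_1 \in K[I]$ and $J' \in \tilde{K}$. From $\Phi_1' = \Phi' I$ and $I' \in K$ we get
\[ J' = \Phi' I - R_1' \in K[I] \subset F_1. \]
Because $K(I)/K$ is purely transcendental, $K$ is relatively algebraically closed in $F_1$, so $F_1 \cap \tilde{K} = K$ and hence $J' \in K$. Writing $R_1 = \sum_{i=0}^{n} a_i I^i$ and reading off the coefficients of the identity $\Phi' I - R_1' = J' \in K$ constrains $\deg R_1$: iterating the top-degree cancellation, the $I^n$-coefficient forces $a_n \in C$, after which the $I^{n-1}$-coefficient forces $a_{n-1} + n a_n I$ to be constant, which is impossible unless $a_n = 0$ whenever $n \geq 3$. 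Hence $n \leq 2$. In the extremal case $n = 2$, the $I^1$-coefficient of the same identity reads $\Phi' = a_1' + 2 a_2 I'$, and integrating gives $\Phi - 2 a_2 I - a_1 \in C$, i.e.\ the Ostrowski relation $\Phi - dI \in K$ with $d = 2 a_2 \in C$; the cases $n \leq 1$ give $\Phi \in K$, so $d = 0$ works.

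For the converse, given $\Phi = dI + k$ with $k \in K$ so that $\Phi' = dI' + k'$, a direct integration by parts yields
\[ \Phi_1 = d \int I I' + \int k' I = \tfrac{d}{2} I^2 + k I - L, \qquad L := \int k I', \]
with $L' = k I' \in K$. Therefore $K(I, \Phi_1) = K(I, L)$ is a Picard-Vessiot extension of $K$ generated by two primitives of $K$-elements, so its Galois group embeds into the additive group $G_a \times G_a$ and is consequently Abelian.

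The main obstacle is the degree bound $\deg R_1 \leq 2$ in the forward direction. Once the bound is secured, the Ostrowski relation is an immediate consequence of identifying the $I^1$-coefficient. The conceptually delicate point upstream is upgrading the conclusion $J' \in \tilde{K}$ provided by Theorem \ref{thliouville} to the stronger $J' \in K$, which depends crucially on the purely transcendental nature of $F_1/K$.
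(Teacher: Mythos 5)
Your proof is correct, and your converse half (integration by parts giving $\Phi_1 = \tfrac{d}{2}I^2 + kI - \int kI'$ with $\int kI'$ a first level integral) coincides with the paper's. The forward half, however, takes a genuinely different route through the same Theorem~\ref{thliouville}. The paper uses the cocycle clause: it computes $\sigma(\Phi_1)-\Phi_1 = c(\sigma)\Phi + d(\sigma)$, notes that virtual Abelianity forces this into $T(F_1/K)=K[I]$, evaluates at some $\sigma_0$ with $c(\sigma_0)=1$ to get $\Phi\in K[I]$, and then invokes Ostrowski's theorem to convert the algebraic dependence of the two primitives $\Phi$ and $I$ into a linear relation with constant coefficients. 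You instead use the decomposition clause $\Phi_1=R_1+J$ and carry out the Ostrowski step by hand: differentiating, observing that $J'\in\tilde K\cap K[I]=K$ (any element of $K[I]$ of positive degree in the transcendental $I$ is transcendental over $K$), and comparing coefficients in $K[I]$ to force $\deg R_1\le 2$ and read off $\Phi'=a_1'+2a_2I'$ with $a_2\in C$ from the degree-one term. Your version is more self-contained --- the degree analysis is in effect a proof of the needed special case of Ostrowski's theorem, and it shows exactly where the constancy of $d=2a_2$ comes from --- while the paper's version is deliberately cast in cocycle language because that computation is the template reused later for $\Psi_{i,j}$ and $\Psi_{\gamma,2\alpha}$, where no explicit closed-form decomposition is available. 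Both arguments use the hypotheses in the same way ($I$ transcendental since $G_1^{\circ}\simeq G_a$, and $T(F_1/K)=K[I]$), so there is no gap; this is a legitimate alternative proof.
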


\begin{proof}
  There exists $c \in \tmop{Hom} (G_2^{\circ}, \mathbbm{C})$ such that for all
  $\sigma \in G_2^{\circ}$, we have $\sigma (I) = I + c (\sigma)$. Hence,
  \[ \sigma (\Phi_1') = \Phi_1' + c (\sigma) \Phi' \Rightarrow \sigma (\Phi_1)
     - \Phi_1 = c (\sigma) \Phi + d (\sigma), \]
  for some mapping $d : G_2^{\circ} \rightarrow \mathbbm{C}$. Now according to
  Theorem \ref{thliouville}, the virtual Abelianity of $F_2 / K$ implies that
  the cocycles $\sigma (\Phi_1) - \Phi_1 \in T (F_1 / K) = K [I]$. Hence
  \[ \sigma (\Phi_1) - \Phi_1 = c (\sigma) \Phi + d (\sigma) \in T (F_1 / K)
     = K [I] . \]
  Let us choose $\sigma = \sigma_0$ such that $c (\sigma_0) = 1$. The last
  relation implies that $\Phi \in K [I]$. Hence, the two primitive integrals
  over $K$, $\Phi$ and $I$ must be dependant and we conclude thanks to
  Ostrowski.
  
  For the converse, let us \ assume that $\Phi = dI + f$ for some $f \in K$.
  Integrating by part, we can compute $\Phi_1$ in closed form thanks to first
  level integrals. Indeed we get
  \[ \Phi_1 = dI^2 / 2 + fI - \int fI' . \]
  and the claim follows since, $\int fI'$ is a first level integral w.r.t $K$.
  
  Alternatively, let us show on this example how the previous cohomological
  arguments are working here. If we assume that $\Phi = dI + f$ for some $f
  \in K$, we get a closed expression for the cocycle
  \[ \sigma (\Phi_1) - \Phi_1 = \Phi c (\sigma) + d (\sigma) = dIc (\sigma) +
     fc (\sigma) + d (\sigma) \in T (F_1 / K) = K [I] . \]
  Let us show that this is an extended coboundary. If we set $P (I) \assign
  dI^2 / 2 + fI$, we get
  \begin{eqnarray*}
    \sigma (P (I)) - P (I) & = & P (I + c (\sigma)) - P (I)\\
    & = & \frac{d}{2}  (I + c (\sigma))^2 + f (I + c (\sigma)) - \frac{d}{2}
    I^2 - fI\\
    \sigma (P (I)) - P (I) & = & dIc (\sigma) + fc (\sigma) + \frac{dc^2
    (\sigma)}{2} .
  \end{eqnarray*}
  Hence, by comparing the above two formulae, we get
  \[ \sigma (\Phi_1) - \Phi_1 = \sigma (P (I)) - P (I) + h (\sigma), \]
  where the function $h : G_2^{\circ} \rightarrow C$, can be computed thanks
  to the formula given by \ $h (\sigma) = - dc^2 (\sigma) / 2 + d (\sigma)$.
  Since the cocycle: $\sigma (\Phi_1) - \Phi_1$ is an extended coboundary,
  and, in fact, is a coboundary, $\Phi_1$ can be computed in closed form, \
  and the result follows.
\end{proof}

This proof explains why we introduced, the a priory artificial notion of an
extended coboundary. Indeed, when looking to the complicated formula for $h$
above, it not obvious that it is a group morphism from $G_2^{\circ}$ to $C$.
Nevertheless, what is really important for our purpose is that $h$ takes
constant values. In the more complicated cases that we shall meet below, we
will not explicitly compute $h$, but \ we will only show its existence.
Moreover, the advantage of this cohomological approach is that it shows that a
second level integrals can be explicitly computed in closed form without
having to make this computation explicitly. This will make things simpler in
the more complicated cases below. \

\subsection{Testing Ostrowski relations thanks to characters}

Let $K /\mathbbm{C} (z)$ be an algebraic extension. \ Let $\Phi$ and $I$ be
two primitive integrals of elements belonging to $K$. In order to test if they
satisfy an Ostrowski relation of the form
\begin{equation}
  \Phi + dI = p \in K \label{eq:test}
\end{equation}
for some $d \in \mathbbm{C}$, we shall use the following observation by taking
advantage that $\Phi$ and $I$ are primitive of algebraic functions.

Let $\sigma$ be a Galois morphism fixing $\mathbbm{C} (z)$, for example a
monodromy operator, and assume further that $\sigma$ acts on $\Phi'$ and $I'$
by characters according to the formulae
\[ \sigma (\Phi') = \chi (\Phi) \Phi', \hspace{2em} \sigma (I') = \chi (I) I'
   . \]
By integrating we get relations of the form
\[ \sigma (\Phi) = \chi (\Phi) \Phi + c_{\Phi}, \hspace{2em} \sigma (I) =
   \chi (I) I + c_I . \]
As a consequence, applying $\sigma$ to (\ref{eq:test}), we \ get a system of
two equations
\[ \left(\begin{array}{cc}
     1 & 1\\
     \chi (\Phi) & \chi (I)
   \end{array}\right)  \left(\begin{array}{c}
     \Phi\\
     dI
   \end{array}\right) = \left(\begin{array}{c}
     p\\
     \sigma (p) - c_{\Phi} - dc_I
   \end{array}\right) \in K^2 . \]
If $\chi (\Phi) \neq \chi (I)$ the matrix is invertible and (\ref{eq:test})
implies that $\Phi$ is algebraic. We have therefore proved the following
criteria.

\begin{lemma}
  \label{lem:char}{\tmdummy}
  
  \begin{enumeratenumeric}
    \item Assume that $\Phi$ is not algebraic, \ and that there exists \ a
    monodromy operator $\sigma$ acting by character on $\Phi'$ and $I'$. If
    $\chi (\Phi) \neq \chi (I)$, then (\ref{eq:test}) does not hold.
    
    \item Here is a generalisation: Let $\Phi, I_1, \ldots, I_n$ be $n + 1$
    integrals over $K$. Let $\sigma$ be a Galois morphism acting by characters
    on the derivatives
    \[ \sigma (\Phi') = \chi_{\Phi} \Phi', \hspace{2em} \sigma (I_j') =
       \chi_j I'_j, \hspace{2em}  \text{\tmop{for}} j = 1, \nocomma \ldots, n.
    \]
    If, $\chi_{\Phi} \nin \{\chi_1, \ldots, \chi_n \}$, then an arbitrary
    Ostrowski relation of the form
    \[ \Phi + \sum^n_{i = 1} d_j^{} I_j \in K, \]
    implies that $\Phi \in K$
  \end{enumeratenumeric}
\end{lemma}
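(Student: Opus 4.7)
The strategy, already essentially sketched in the paragraph preceding the lemma, is to integrate the character relations on derivatives into relations on the integrals themselves with additive constants, and then to extract $\Phi\in K$ from a Vandermonde-type linear system whose invertibility is exactly the hypothesis on the $\chi$'s. From $\sigma(\Phi')=\chi_{\Phi}\Phi'$ and $\sigma(I_j')=\chi_j I_j'$ I would integrate to obtain
\[
\sigma(\Phi)=\chi_{\Phi}\Phi+c_{\Phi},\qquad \sigma(I_j)=\chi_j I_j+c_j,
\]
for some constants $c_{\Phi},c_j\in \mathbb{C}$, using that constants of integration are exactly what is lost under differentiation.

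For part (1) I would simply apply $\sigma$ to the assumed relation $\Phi+dI=p\in K$. Combined with the original, this yields the $2\times 2$ system displayed just above the lemma statement, whose determinant $\chi(I)-\chi(\Phi)$ is nonzero by assumption. Solving gives $\Phi\in K$, and since $K/\mathbb{C}(z)$ is algebraic, $\Phi$ is algebraic, contradicting the hypothesis.

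For part (2) I would first perform a harmless normalisation: group the indices according to the value of $\chi_j$, replacing each block of integrals with a common character by a single $\mathbb{C}$-linear combination $J_i$, whose derivative is $\sigma$-scaled by the corresponding $\chi_i'$. The Ostrowski relation then becomes $\Phi+\sum_{i=1}^{m}J_i=p\in K$, with pairwise distinct characters $\chi_{\Phi},\chi_1',\ldots,\chi_m'$. Applying $\sigma^k$ for $k=0,1,\ldots,m$ yields $m+1$ equations whose coefficient matrix is Vandermonde in these distinct scalars, hence invertible. Inverting expresses $\Phi$ as a $\mathbb{C}$-linear combination of elements of $K$, so $\Phi\in K$. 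An alternative inductive write-up on $n$, eliminating one $I_j$ at a time by forming $\sigma(\cdot)-\chi_{\Phi}\cdot$ of the current relation, would reach the same conclusion and may read more cleanly.

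The only step requiring genuine care is the bookkeeping of the constants of integration $c_{\Phi}$ and $c_j$ accumulated under iteration of $\sigma$: I need to verify that they land in the right-hand-sides $p_k\in K$ rather than polluting $\Phi$ or the $J_i$. This is immediate since the $c$'s are complex constants and $\sigma$ stabilises $K$, but it is the natural point at which to slip up in a direct write-up. Everything else is finite-dimensional linear algebra over $\mathbb{C}$, and no deeper Picard--Vessiot input is required — the result is purely a statement about primitives of algebraic functions transforming by characters.
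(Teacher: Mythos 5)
Your proposal is correct and follows essentially the same route as the paper: the paper derives part (1) from the displayed $2\times 2$ system obtained by applying $\sigma$ to the Ostrowski relation, and for part (2) it likewise groups the $I_j$ by character and inverts the Vandermonde system produced by the powers $\sigma^p$, $0\leqslant p\leqslant n$. Your extra remark about tracking the constants of integration is a point the paper leaves implicit but handles in the same way.
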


The proof of the second point \ is similar to previous particular case.
Indeed, by grouping together the $I_j$ corresponding to the same character, we
are reduced to the case where all the $\chi_j$ are distinct. Then, the action
of the $\sigma^p$ for $0 \leqslant p \leqslant n$ lead to an invertible
Vandermonde $n \times n$ system, which allows a similar conclusion as in point
(1).

We are going to use this lemma in the proofs of Propositions \ref{prop:R1ve2},
\ref{prop:R2ve2} and \ref{prop:R1EX} below, by showing thanks to monodromies
that some Ostrowski relations are impossible. This is one of the great
advantage of Yoshida transformation in comparison to the time parametrisation,
where the singularities of the corresponding complex functions are not well
understood. Another advantage of Yoshida transformation is going to be shown
right now.

\subsection{To be or not to be an algebraic integral}

\label{sec:tobe}

\begin{remark}
  \label{rem:fuchs} Let $F / K_0$ be the Picard-Vessiot extension of either
  $\tmop{VE}_{2, \alpha}^{\gamma}$, or $\tmop{EX}_{2, \alpha,
  \beta}^{\gamma}$, over the field $K_0 =\mathbbm{C} (z) [\omega]$. The two
  above systems are Fuchsian with singularities at $z \in \{0, 1, \infty\}$.
  Indeed, according to Corollary \ref{cor:ve2z} and Proposition
  \ref{prop:int}, each solution of any of these systems is holomorphic in any
  simply connected domain of $\mathbbm{P}^1 \backslash \{0 ; 1 ; \infty\}$,
  with at most exponential growth at the singularities.
  
  As a consequence, the Schlesinger \ theorem implies that $\tmop{Gal} (F
  /\mathbbm{C}(z))$ is topologically generated by the two monodromies
  $\mathcal{M}_0$ and $\mathcal{M}_1$.
  
  This observation will have the following important consequence. Let $\Gamma$
  be a holonomic element of $F$ fixed by $\mathcal{M}_0$, and having a finite
  orbit under $<\mathcal{M}_1 >$. More generally, let us assume that $\Gamma$
  has a finite orbit under the monodromy group $\mathcal{M} \subset \tmop{Gal}
  (F /\mathbbm{C}(z))$. Then necessarily, $\Gamma$ is algebraic over
  $\mathbbm{C} (z)$. Indeed, since $\Gamma$ is holonomic, there exists a
  $\mathbbm{C}$-finite dimensional vector space $V$ containing $\Gamma$ on
  which $\tmop{Gal} (F /\mathbbm{C}(z))$ acts algebraically. As a consequence,
  the map
  \[ f : \tmop{Gal} (F /\mathbbm{C}(z)) \rightarrow V ; \hspace{2em} \sigma
     \mapsto \sigma (\Gamma), \]
  is a morphism of algebraic variety. Since the image $f (\mathcal{M})$ is
  finite, it is a Zariski closed subset of $V$, so $f^{- 1} (f (\mathcal{M}))$
  is a closed subset of $\tmop{Gal} (F /\mathbbm{C}(z))$ containing
  $\mathcal{M}$. Since $\mathcal{M}$ is dense in $\tmop{Gal} (F
  /\mathbbm{C}(z))$, we get that $f^{- 1} (f (\mathcal{M})) = \tmop{Gal} (F
  /\mathbbm{C}(z))$ and the orbit of $\Gamma$ under $\tmop{Gal} (F
  /\mathbbm{C}(z))$ is finite as has to be shown.
\end{remark}

The most general integrals $\Phi$ and $\Psi$ we shall meet below are Abelian
integrals of the form
\[ \Gamma \assign \int \frac{P \Omega}{J^2}, \]
where
\begin{itemizedot}
  \item $\Omega (z) = z^{e_0}  (1 - z)^{e_1}$, where the exponents $e_0$ and
  $e_1$ are rational numbers $> - 1$, with $e_0 + e_1 \nin \mathbbm{Z}$.
  
  \item $P (z) \in \mathbbm{C} [z]$.
  
  \item $J (z)$ is a Jacobi polynomial having $n$ simples roots $0 < z_1 <
  \cdots < z_n < 1$, if $n = \deg (J) \geqslant 1$.
\end{itemizedot}
Precisely, $\Gamma$ will be an integral of the type $\Phi$, for $n = \deg (J)
= 0$, and of the type $\Psi$ otherwise.

According to Remark \ref{rem:fuchs}, if one of the two exponents $e_0$, or
$e_1$ is an integer, then the corresponding integral $\Phi,$ or $\Psi$ has
finite orbit under the monodromy group, hence is algebraic. This is a very
surprising fact especially for the integrals $\Psi$. Indeed, this shows that
here, the $P, \Omega$ and $J$ must be so specific that $P \Omega / J^2$ does
not has residues at none of the $z_i$.

Away from those cases, we now have to investigate the integrals $\Gamma$, for
which the two exponents are not integers in order to be able to test their
eventual algebraicity.

\subsubsection{Reduction of the integrals}

When trying to compute $\Gamma$ in closed form we get the following formula
\[ \forall R \in \mathbbm{C} (z), \hspace{2em} \left( \frac{R \Mho}{J}
   \right)' = \frac{T (R) \Omega}{J^2}, \]
with
\[ \left\{ \begin{array}{lll}
     \Mho (z) & \assign & \Omega (z) z (1 - z)\\
     T (R) & \assign & z (1 - z) JR' + [(e_0 + 1 - (e_0 + e_1 + 2) z) J + z (z
     - 1) J'] R
   \end{array} \right. . \]
Viewed as a linear mapping of $\mathbbm{C} (z)$ to itself, $T$ is injective.
Indeed,
\[ (T (R) = 0) \hspace{1em} \Longrightarrow \hspace{1em} \left( \frac{R
   \Mho}{J} \right)' = 0 \hspace{1em} \Longrightarrow \hspace{1em} (R = 0) .
\]
Now, if we restrict $T$ to $\mathbbm{C} [z]$, by computing the leading term of
$T (z^r)$, we see that the condition $e_0 + e_1 \nin \mathbbm{Z}$ implies that
$T$ increment the degree by $n + 1$. That is $\deg (T (R)) = \deg (R) + n +
1$. By counting dimensions we therefore get $\tmop{that} \tmop{for} \tmop{all}
N \geqslant 0$, we have the following direct sum decomposition
\[ \mathbbm{C}_{N + n + 1} [z] = T (\mathbbm{C}_N [z]) \oplus \mathbbm{C}_n
   [z] . \]
Since it holds for all $N \geqslant 0$, we get
\[ \mathbbm{C} [z] = T (\mathbbm{C}[z]) \oplus \mathbbm{C}_n [z] . \]
As a consequence, we can reduce any integral $\Gamma$ by lowering the degree
of the numerator in the following way : $\forall P \in \mathbbm{C} [z] \exists
! (R, \Lambda) \in \mathbbm{C} [z] \times \mathbbm{C}_n [z]$ such that
\begin{equation}
  P = T (R) + \Lambda . \label{eq:decP}
\end{equation}
By multiplying this equality by $\Omega / J^2$, and integrating we get
\begin{eqnarray}
  \int \frac{P \Omega}{J^2} & = & \frac{R \Mho}{J} + \int \frac{\Lambda
  \Omega}{J^2}, \nonumber\\
  \Gamma (P) & = & \frac{R \Mho}{J} + \Gamma (\Lambda) .  \label{eq:gammaP}
\end{eqnarray}
Since $R \Mho / J$ is algebraic, $\Gamma (P)$ is algebraic, iff $\Gamma
(\Lambda)$ is algebraic. For the study of this problem we get the following.

\begin{theorem}
  \label{th:redint}Let $P, \Omega, J$ be as above with $e_0$ and $e_1$ in
  $\mathbbm{Q} \backslash \mathbbm{Z}$.
  \begin{enumeratenumeric}
    \item The integral $\Gamma (P)$ is algebraic iff it belongs to the field
    $\mathbbm{C} (z) [\Omega]$, that is iff there exists $R \in \mathbbm{C}
    (z)$ such that $P = T (R)$.
    
    \item $\forall P \in \mathbbm{C} [z]$ and $R \in \mathbbm{C} (z)$, the
    relation $P = T (R)$ implies that $R \in \mathbbm{C} [z]$.
    
    \item For a polynomial $P \in \mathbbm{C} [z]$, let $R$ and $\Lambda$
    satisfy equation (\ref{eq:decP}). Then $\Gamma (P)$ is algebraic iff
    $\Lambda = 0$. Two integrals $\Gamma (P_1)$ and $\Gamma (P_2)$ satisfy an
    Ostrowski relation: $\Gamma (P_1) + d \Gamma (P_2)$ is algebraic iff for
    the corresponding $\Lambda$s, we have $\Lambda_1 + d \Lambda_2 = 0$.
    
    \item If $P$ is a non zero polynomial with $\deg (P) \leqslant n$, then
    $\Gamma (P)$ is transcendental. Moreover. the $n + 1$ integrals $\int
    \frac{z^s \Omega}{J^2}$ with $0 \leqslant s \leqslant n \nocomma$ are
    algebraically independent.
  \end{enumeratenumeric}
\end{theorem}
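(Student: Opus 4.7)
The plan is to prove the four parts in order: (1) and (2) do the real work, while (3) and (4) follow from linearity combined with the direct sum decomposition $\mathbbm{C}[z] = T(\mathbbm{C}[z]) \oplus \mathbbm{C}_n[z]$ established just before the theorem.

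For part (1), the implication ``$P = T(R)$ for some $R \in \mathbbm{C}(z)$ implies $\Gamma(P) \in \mathbbm{C}(z)[\Omega]$ (hence algebraic)'' is immediate from the formula $(R\Mho/J)' = T(R)\Omega/J^2$ derived just before the theorem. The content is the converse: assume $\Gamma \assign \Gamma(P)$ is algebraic over $K \assign \mathbbm{C}(z)$, and choose a finite Galois extension $E/K$ containing both $\Omega$ and $\Gamma$; let $G \assign \tmop{Gal}(E/K)$. Since $\Omega^d \in K$ for a suitable integer $d$ and differential automorphisms commute with $d/dz$, a short logarithmic derivative computation shows that $\sigma(\Omega) = \chi(\sigma)\Omega$ for a character $\chi : G \rightarrow \mathbbm{C}^{\ast}$ taking values in roots of unity. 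Applying $\sigma$ to $\Gamma' = P\Omega/J^2 \in K\Omega$ and integrating, $\sigma(\Gamma) = \chi(\sigma)\Gamma + c(\sigma)$ for some constant $c(\sigma) \in \mathbbm{C}$, and expanding $\sigma\tau(\Gamma)$ in two ways exhibits $c : G \rightarrow \mathbbm{C}_\chi$ as a $1$-cocycle. Since $G$ is finite and $\mathbbm{C}_\chi$ is a $\mathbbm{Q}$-vector space, $H^1(G, \mathbbm{C}_\chi) = 0$, so $c$ is a coboundary: there is $a \in \mathbbm{C}$ with $c(\sigma) = a(\chi(\sigma) - 1)$. Then $\Gamma + a$ transforms by the pure character $\chi$, so $(\Gamma + a)/\Omega$ is $G$-invariant, hence lies in $K$. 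Writing this quotient as $R z(1-z)/J$ with $R \in K$ yields $\Gamma = R\Mho/J - a \in \mathbbm{C}(z)[\Omega]$, and the differentiation formula forces $P = T(R)$.

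For part (2), assume $P = T(R) \in \mathbbm{C}[z]$ with $R \in \mathbbm{C}(z)$, and suppose for contradiction that $R$ has a pole of order $m \geqslant 1$ at some $z_0 \in \mathbbm{C}$; write $R = (z - z_0)^{-m} S$ with $S(z_0) \neq 0$. I then expand $T(R)$ to leading order at $z_0$, splitting into four cases. (a) If $z_0 \nin \{0, 1, z_1, \ldots, z_n\}$, then $z(1-z)JR'$ has pole order $m+1$ while the remaining contributions are of order $\leqslant m$, forcing a pole of $T(R)$. (b) If $z_0 = 0$, the leading $z^{-m}$ coefficient of $T(R)$ reduces to $(e_0 + 1 - m)J(0)S(0)$, nonzero because $J(0) \neq 0$ (noted in the remark following Table \ref{tabjacobi}) and $e_0 \nin \mathbbm{Z}$ excludes $m = e_0 + 1$. (c) The symmetric computation at $z_0 = 1$ yields the leading coefficient $(m - e_1 - 1)J(1)S(0) \neq 0$. (d) At a simple root $z_0 = z_i$ of $J$, writing $J = (z - z_i)J_i$ with $J_i(z_i) \neq 0$, the contributions from $z(1-z)JR'$ and $z(z-1)J'R$ combine to $-(m+1)z_i(1-z_i)J_i(z_i)S(z_i)$, nonzero since $z_i \in (0,1)$. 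Thus $R$ has no finite pole, hence $R \in \mathbbm{C}[z]$.

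Parts (3) and (4) now follow at once. For (3), decomposing $P = T(R) + \Lambda$ with $\Lambda \in \mathbbm{C}_n[z]$ gives $\Gamma(P) = R\Mho/J + \Gamma(\Lambda)$, so $\Gamma(P)$ is algebraic iff $\Gamma(\Lambda)$ is; by (1) and (2) this forces $\Lambda = T(R')$ for some $R' \in \mathbbm{C}[z]$, but since $T$ strictly raises degree by $n+1$ while $\deg \Lambda \leqslant n$, one must have $R' = 0$ and $\Lambda = 0$. The Ostrowski statement follows from the same reasoning applied to $P_1 + dP_2 = T(R_1 + dR_2) + (\Lambda_1 + d\Lambda_2)$. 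For (4), a nonzero $P \in \mathbbm{C}_n[z]$ is its own $\Lambda$ in the decomposition, so $\Gamma(P)$ is transcendental by (3); algebraic independence over $\overline{\mathbbm{C}(z)}$ of the $n+1$ integrals $\int z^s\Omega/J^2$ then follows from their $\mathbbm{C}$-linear independence modulo algebraic functions (exactly the transcendence statement applied to $P = \sum_s c_s z^s \in \mathbbm{C}_n[z]$), via the classical Ostrowski theorem on integrals of algebraic functions. The main obstacle is the forward direction of (1): passing from ``$\Gamma$ algebraic'' to the explicit form $\Gamma = R\Mho/J - a$. The crucial ingredient is the cohomological vanishing $H^1(G, \mathbbm{C}_\chi) = 0$, which kills the additive cocycle $c(\sigma)$; without it one has only an affine-linear (not purely character-theoretic) action of $G$ on $\Gamma$. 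The local analysis in (2) is routine but requires careful bookkeeping, the key inputs $J(0), J(1) \neq 0$, simplicity of the roots $z_i \in (0,1)$, and $e_0, e_1 \nin \mathbbm{Z}$ conspiring precisely to block every potential cancellation.
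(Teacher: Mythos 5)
Your proof is correct, and the overall architecture (reduce algebraicity of $\Gamma(P)$ to the existence of $R$ with $P=T(R)$, show such an $R$ must be a polynomial by local analysis at the candidate poles, then conclude (3) and (4) by the degree count $\deg T(R)=\deg R+n+1$) is the same as the paper's. You differ in two places, both to your advantage. In part (1), the paper simply asserts that an algebraic $\Gamma(P)$ must lie in the Kummer extension $\mathbbm{C}(z)[\Mho]$ and then expands it in the power basis $\{\Mho^s\}$, differentiating and matching coefficients to isolate the $s=1$ term; your Galois-cohomological argument ($\sigma(\Gamma)=\chi(\sigma)\Gamma+c(\sigma)$, then $H^1(G,\mathbbm{C}_\chi)=0$ for finite $G$ acting on a divisible module, so $\Gamma+a$ transforms by the pure character) actually \emph{proves} that assertion rather than assuming it, so it fills a genuine gap in the paper's write-up, at the cost of invoking the averaging trick. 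In part (2), at the roots $z_i$ of $J$ the paper argues via residues: since $\Gamma$ is algebraic, $\Gamma'$ has no residue at $z_i$, so $\Gamma$ has at most a simple pole there, which bounds the pole of $R$; your direct Laurent computation of the leading coefficient $-(m+1)z_i(1-z_i)J_i(z_i)S(z_i)$ is equivalent but more uniform with your cases (a)--(c) and does not lean on the conclusion of part (1). Two cosmetic points: in your case (c) the factor $S(0)$ should read $S(1)$, and note that the paper's own text writes the leading coefficient at $z=1$ as $J(1)[p-e_0-1]$ where it should involve $e_1$ — your version is the correct one.
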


The ideas behind this result are very closed to what we did in
{\cite{duval08}}.

\begin{proof}
  1. Since the exponents are rational but not integers, there exist a minimal
  integral power $d \geqslant 2$ such that $\Omega^d \in \mathbbm{C} (z)$. As
  a consequence, the field extension $\mathbbm{C} (z) [\Omega] =\mathbbm{C}
  (z) [\Mho] /\mathbbm{C} (z)$ is a Kummer extension of degree $d$. Now,
  $\Gamma (P)$ is algebraic iff it belongs to $\mathbbm{C} (z) [\Mho]$. It can
  therefore be expanded into the form
  \[ \Gamma (P) = \int \frac{P \Omega}{J^2} = \sum_{s = 1}^d \frac{R_s}{J}
     \Mho^s \text{ \hspace{1em} \tmop{with} \hspace{1em} } R_s \in \mathbbm{C}
     (z) . \]
  Taking derivative of the above equation, we obtain the following expression
  \[ \frac{P \Omega}{J^2} = \sum_{s = 1}^d \left( \frac{R_s'}{R_s} -
     \frac{J'}{J} + s \frac{\Mho'}{\Mho} \right) \frac{R_s}{J} \Mho^s \]
  Since in the right hand side, each coefficient of $\Mho^s$ is in
  $\mathbbm{C} (z)$, we must have
  \[ \left( \frac{R_1}{J} \Mho \right)' = 0 \hspace{2em} \text{\tmop{for}} s
     \geqslant 2, \]
  and
  \[ \frac{P \Omega}{J^2} = \left( \frac{R_1}{J} \Mho \right)' = \frac{T
     (R_1) \Omega}{J^2} \hspace{1em} \Longleftrightarrow \hspace{1em} P = T
     (R_1), \]
  and the claim follows.
  
  2. If $P = T (R)$ with $R \in \mathbbm{C} (z)$, then the function
  \[ z \longmapsto \Gamma (z) = \int \frac{P \Omega}{J^2} = \frac{R}{J} \Mho,
  \]
  is holomorphic in an arbitrary \ simply connected domain of $\mathbbm{C}
  \setminus \{0, 1, z_1, \ldots, z_n \}$. So, if $R$ has got a pole, it must
  belong to $\{0, 1, z_1, \ldots, z_n \}$. But for all $p \geqslant 1$ , the
  leading term of $T (1 / z^p)$ is given by
  \[ T (1 / z^p) = J (0)  [e_0 + 1 - p] / z^p \neq 0 \nocomma, \]
  $\text{\tmop{since}} e_0 \nin \mathbbm{Z} \text{\tmop{and}} J (0) \neq 0.$As
  a consequence, $z = 0$ cannot be a pole of $R$. A similar argument hold at
  $z = 1$, since the expansion of $T (1 / (z - 1)^p)$ begins with $J (1)  [p -
  e_0 - 1] / (z - 1)^p$.
  
  Around $z = z_i$, $\Gamma' (z)$ is of the form
  \[ \Gamma' (z) = a / (z - z_i)^2 + 0 / (z - z_i) + h_1 (z), \]
  with $h_1$ holomorphic. Indeed, because $\Gamma$ is algebraic $\Gamma'$ does
  not have residue. So,
  \[ \Gamma (z) = - a / (z - z_i) + h_2 (z) = R \Mho / J = Rh_3 (z) / (z -
     z_i), \]
  with $h_2$ and $h_3$ holomorphic around $z_i$. Since $h_3 (z_i) \neq 0$, $R$
  cannot have a pole at $z_i$. Hence, $R$ is polynomial.
  
  3. and 4. follow from the following observation. Let $\Lambda \in
  \mathbbm{C}_n [z]$ be such that $\Gamma (\Lambda)$ is algebraic. Point 1
  implies the existence of some $R \in \mathbbm{C} (z)$ with $\Lambda = T
  (R)$. But according to point 2, $R$ is a polynomial. If $R \neq 0$ then
  $\deg (\Lambda) = \deg (R) + n + 1 > n$ which is contradictory. So $\Lambda
  = T (0) = 0$ and the integral is algebraic iff $\Lambda = 0$.
\end{proof}

\subsubsection{Linear forms and the equation \ $\Lambda = \Lambda (P) = 0$}

The above result show that the algebraicity of an integral $\Gamma (P)$
reduces to the vanishing of the polynomial $\Lambda$ appearing in equation
(\ref{eq:decP}). Although the correspondence $P \mapsto \Lambda$ is linear,
the decomposition given by equation (\ref{eq:decP}), is very hard to perform
effectively. Here, we are going to show that the vanishing of $\Lambda$ can be
controlled by the vanishing of some linear forms on $P$ which can be directly
computed thanks to some definite integrals.

In the most simple case, that is for $J = 1$, i.e,, when $J ( z)$ is a
constant, this is achieved thanks to the following.

\begin{proposition}
  \label{prop:testphi}Let $e_0$ and $e_1$ be two real numbers greater than $-
  1$, and \ belonging to $\mathbbm{Q} \backslash \mathbbm{Z}$ with $e_0 + e_1
  \nin \mathbbm{Z}$. Let us set $\Omega \assign z^{e_0}  (1 - z)^{e_1}$. Then
  we have
  \begin{enumeratenumeric}
    \item For any polynomial $P$, the primitive integral $\Phi \assign \int P
    \Omega$ is algebraic iff
    \[ \mu (P) \assign \int^1_0 P \Omega (z) dz = 0. \]
    \item For all $n \in \mathbbm{N}$,
    \[ \mu (z^n) = \frac{(e_0 + 1)_n}{(e_0 + e_1 + 2)_n} B (e_0 + 1 ; e_1 +
       1), \]
    where $B (p ; q)$ is the usual Euler Beta function and $(x)_n$ is the
    Pochammer symbol.
    
    \item If $P (z) = \sum p_n z^n$, then $\Phi$ is algebraic iff
    \[ \sum p_n  \frac{(e_0 + 1)_n}{(e_0 + e_1 + 2)_n} = 0 \]
  \end{enumeratenumeric}
\end{proposition}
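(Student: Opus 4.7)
The plan is to treat this as the $J=1$, $n=0$ special case of Theorem \ref{th:redint} and convert the abstract algebraicity criterion ($\Lambda=0$) into the concrete integral condition ($\mu(P)=0$) by integrating the decomposition (\ref{eq:decP}) against $\Omega$ over $[0,1]$. The key observation is that when $J=1$ the space $\mathbb{C}_n[z]$ collapses to $\mathbb{C}_0[z]=\mathbb{C}$, so $\Lambda$ is just a scalar and detecting its vanishing is a single linear test.

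First I would write out (\ref{eq:decP}) explicitly: for $P\in\mathbb{C}[z]$ there is a unique $(R,\Lambda)\in\mathbb{C}[z]\times\mathbb{C}$ with $P=T(R)+\Lambda$, where (specialising $J=1$)
\[
  T(R)=z(1-z)R'+\bigl(e_0+1-(e_0+e_1+2)z\bigr)R.
\]
Multiplying by $\Omega$ one recognises the defining identity $T(R)\,\Omega=(R\,\Mho)'$ with $\Mho=z^{e_0+1}(1-z)^{e_1+1}$. Now integrate from $0$ to $1$: since $e_0,e_1>-1$, the function $R\,\Mho$ vanishes at both endpoints, so $\int_0^1 T(R)\Omega\,dz=0$. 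Hence
\[
  \mu(P)=\Lambda\int_0^1\Omega\,dz=\Lambda\,B(e_0+1,e_1+1),
\]
and since $B(e_0+1,e_1+1)\neq 0$ the vanishing of $\mu(P)$ is equivalent to $\Lambda=0$. By Theorem \ref{th:redint}(3) this is exactly the algebraicity criterion for $\Phi=\Gamma(P)$, proving (1).

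For (2) I would simply recognise $\mu(z^n)=\int_0^1 z^{e_0+n}(1-z)^{e_1}dz=B(e_0+n+1,e_1+1)$ and rewrite it via Gamma functions, using $\Gamma(e_0+n+1)=(e_0+1)_n\Gamma(e_0+1)$ and $\Gamma(e_0+e_1+n+2)=(e_0+e_1+2)_n\Gamma(e_0+e_1+2)$ to pull out the Pochhammer ratio and leave behind $B(e_0+1,e_1+1)$. Statement (3) then drops out by linearity of $\mu$ applied to $P=\sum p_n z^n$ combined with (1) and (2), after dividing by the nonzero constant $B(e_0+1,e_1+1)$.

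There is no real obstacle; the only point that deserves care is checking that both boundary contributions of $R\,\Mho$ vanish, which is where the hypothesis $e_0,e_1>-1$ is used, and that $e_0,e_1,e_0+e_1\notin\mathbb{Z}$ is inherited from Theorem \ref{th:redint} to guarantee that the decomposition $P=T(R)+\Lambda$ exists and is unique (so that $\Lambda$ is well-defined and the algebraicity criterion applies).
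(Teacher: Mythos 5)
Your proposal is correct and follows essentially the same route as the paper: both specialise the decomposition $P=T(R)+\Lambda$ of Theorem \ref{th:redint} to $J=1$ (so $\Lambda$ is a scalar), integrate over $[0,1]$ using $e_0,e_1>-1$ to kill the boundary term $R\,\Mho$, and conclude $\mu(P)=\Lambda\int_0^1\Omega\,dz$, with points (2) and (3) reduced to the Beta-function evaluation and linearity. No substantive difference.
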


The condition on the two exponents to be greater than -1, guaranties the
convergence of the generalised integrals between 0 and 1. The first point
shows that the set of polynomials for which $\Phi$ is algebraic is an
hyperplane given by the kernel of the linear form $\mu$.

Points 2 and 3, give explicit criterion on the coefficients of $P$ to decide
whether or not $\Phi$ is algebraic.

\begin{proof}
  1. Since $J = 1$, in the decomposition given by equation \ (\ref{eq:decP}):
  $P = T (R) + \Lambda$, we have that $\Lambda$ is a number. The corresponding
  relation (\ref{eq:gammaP}), can be written
  \[ \Phi = \int P \Omega = R \Mho + \Lambda \int \Omega = R (z) z^{e_0 + 1} 
     (1 - z)^{e_1 + 1} + \Lambda \int \Omega . \]
  Now let us \ compute $\Lambda$ by evaluating the integrals between 0 and 1.
  Since $e_p + 1 > 0$ for $p \in \{0 ; 1\}$, and $R \in \mathbbm{C} [z]$, we
  have
  \[ \mu (P) = \int_0^1 P \Omega (z) dz = \Lambda \int_0^1 \Omega (z) dz
     \hspace{1em} \Longrightarrow \hspace{1em} \Lambda = \mu (P) / \int_0^1
     \Omega (z) dz. \]
  2. Is a direct consequence of the relation
  \[ \mu (z^n) = \int_0^1 z^{e_0 + n}  (1 - z)^{e_1} dz = B (e_0 + n + 1 ;
     e_1 + 1) = \frac{\Gamma (e_0 + n + 1) \Gamma (e_1 + 1)}{\Gamma (e_0 + e_1
     + n + 2)} . \]
  3. Follows directly from the previous considerations.
\end{proof}

On the basis of the same ideas we now treat the case when $\deg (J) = n
\geqslant 1$.

Let $0 < z_1 \leqslant \cdots \leqslant z_n < 1$ be the roots of $J$. Let
$\gamma_0$ be a half of the circle going counterclockwise from 0 to 1. Let
$\gamma_i$ for $1 \leqslant i \leqslant n$ be some small trigonometric circles
each enclosing $z_i$ and no other root $z_j$. Let us \ consider the $n + 1$
linear forms on $\mathbbm{C} [z]$ given by
\[ \mathcal{L}_i (P) \assign \int_{\gamma_i} \frac{P \Omega}{J^2},
   \hspace{2em} 0 \leqslant i \leqslant n. \]
Then we get the following

\begin{proposition}
  \label{prop:testpsi}With the previous notations
  \begin{enumeratenumeric}
    \item If $P = T (R) + \Lambda$ as in relation (\ref{eq:decP}), then for
    all $0 \leqslant i \leqslant n$, $\mathcal{L}_i (P) =\mathcal{L}_i
    (\Lambda)$.
    
    \item The $n + 1$ linear forms $\mathcal{L}_i$ are free and $\int \frac{P
    \Omega}{J^2}$ is algebraic iff for all $0 \leqslant i \leqslant n$,
    $\mathcal{L}_i (P) = 0$.
  \end{enumeratenumeric}
\end{proposition}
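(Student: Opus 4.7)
For part (1), my approach is to use the identity $(R\Mho/J)' = T(R)\Omega/J^2$ derived just before equation (\ref{eq:decP}), which rewrites each $\mathcal{L}_i(T(R))$ as the integral of a total derivative. For $i \geq 1$, the loop $\gamma_i$ is a small closed curve around $z_i$ on which $\Omega$ is single-valued (its branchings are only at $0$ and $1$), so $R\Mho/J$ is genuinely meromorphic near $z_i$, and the derivative of a meromorphic function has no residue; hence $\int_{\gamma_i}(R\Mho/J)'\,dz = 0$. For $i=0$, the fundamental theorem of calculus applied along the arc $\gamma_0$ gives $\mathcal{L}_0(T(R)) = [R\Mho/J]_0^1$, which vanishes because $\Mho = z^{e_0+1}(1-z)^{e_1+1}$ and the hypotheses $e_0, e_1 > -1$ force $e_0+1, e_1+1 > 0$. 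Writing $P = T(R) + \Lambda$ as in (\ref{eq:decP}) then gives $\mathcal{L}_i(P) = \mathcal{L}_i(\Lambda)$.

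For part (2), I plan to establish the equivalence \emph{$\Gamma(P)$ is algebraic iff every $\mathcal{L}_i(P) = 0$}; combined with Theorem \ref{th:redint}(3), this simultaneously gives freeness of the $n+1$ forms on the $(n+1)$-dimensional space $\mathbbm{C}_n[z]$ (their common kernel reduces to $\{0\}$) and the algebraicity criterion. One direction is immediate: if $\Gamma(P)$ is algebraic, Theorem \ref{th:redint}(1)--(2) gives $\Gamma(P) = R\Mho/J + \text{const}$ with $R \in \mathbbm{C}[z]$, and the argument of part (1) kills every $\mathcal{L}_i(P)$. The substantive direction is the converse: assuming $\Lambda \in \mathbbm{C}_n[z]$ satisfies $\mathcal{L}_i(\Lambda) = 0$ for all $i$, show $\Gamma(\Lambda)$ is algebraic.

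For this, I would analyze the monodromy of $F(z) = \int \Lambda\Omega/J^2$. Vanishing of $\mathcal{L}_i(\Lambda) = 2\pi\mathi\,\text{Res}_{z_i}(\Lambda\Omega/J^2)$ for $i \geq 1$ means $F$ is single-valued (locally meromorphic with at most simple poles) near each $z_i$. Since the integrand near $z=0$ has the form $z^{e_0}\cdot(\text{analytic})$ and $e_0+1 \neq 0$, direct integration gives the local expansion $F(z) = F(0) + z^{e_0+1}H(z)$ with $H$ analytic at $0$, so the monodromy at $0$ reads $\mathcal{M}_0(F) = \omega_0 F + (1-\omega_0)F(0)$ with $\omega_0 = e^{2\pi\mathi e_0} \neq 1$; symmetrically $\mathcal{M}_1(F) = \omega_1 F + (1-\omega_1)F(1)$. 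The identity $F(1) - F(0) = \int_{\gamma_0} \Lambda\Omega/J^2 = \mathcal{L}_0(\Lambda) = 0$ then yields $F(0) = F(1) =: F^*$, so that $G := F - F^*$ transforms as $G \mapsto \omega_i G$ under $\mathcal{M}_0$ and $\mathcal{M}_1$ and is single-valued near each $z_i$.

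Since $e_0, e_1 \in \mathbbm{Q}$, the scalars $\omega_0, \omega_1$ are roots of unity, and the monodromy orbit of $G$ is contained in the finite set $\{\omega_0^a\omega_1^b G\}$. The function $G$ is holonomic, being a primitive of an element of $\mathbbm{C}(z)[\Omega]$. Remark \ref{rem:fuchs} therefore forces $G$ to be algebraic over $\mathbbm{C}(z)$, hence $F = G + F^*$ is algebraic, and Theorem \ref{th:redint}(3) concludes $\Lambda = 0$. The main obstacle I anticipate is the local-expansion bookkeeping at $z=0$ and $z=1$ needed to identify the translation parts $c_0, c_1$ of the monodromies with the boundary values of $F$ and thereby link the Ostrowski-type constraint $\mathcal{L}_0(\Lambda)=0$ to the compatibility $F(0)=F(1)$; once that identification is made, the remainder is a standard Fuchsian monodromy argument combined with Remark \ref{rem:fuchs}.
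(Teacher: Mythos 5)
Your argument is correct, and for the substantive half of point (2) it takes a genuinely different route from the paper. Part (1) is the paper's proof made explicit (the paper simply cites relation (\ref{eq:gammaP}); you spell out why the exact term $R\Mho/J$ contributes nothing: closed loops kill derivatives of meromorphic functions, and on the open arc $\gamma_0$ the boundary terms vanish because $\Mho=z^{e_0+1}(1-z)^{e_1+1}$ vanishes at $0$ and $1$). For part (2), the paper never touches global monodromy: it exhibits a dual family of test polynomials, $P_0\assign J^2$ (whose integrand $\Omega$ has no residue at any $z_i$, so only $\mathcal{L}_0(J^2)=\mu(1)\neq 0$ survives) and $P_s\assign J^2/(z-z_s)$ (whose integrand $\Omega/(z-z_s)$ has a nonzero residue only at $z_s$), gets a triangular evaluation matrix, concludes that the $\mathcal{L}_i$ restrict to a basis of $\mathbbm{C}_n[z]^{\ast}$, and then invokes Theorem \ref{th:redint}(3). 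You instead prove directly that $\mathcal{L}_i(\Lambda)=0$ for all $i$ forces $\Gamma(\Lambda)$ to be algebraic: vanishing residues at the $z_i$ plus the period condition $\mathcal{L}_0(\Lambda)=F(1)-F(0)=0$ make $G=F-F^{\ast}$ a simultaneous eigenvector of all local monodromies with root-of-unity eigenvalues, so its orbit is finite and Remark \ref{rem:fuchs} applies; freeness then follows from injectivity on the $(n+1)$-dimensional space $\mathbbm{C}_n[z]$ together with Theorem \ref{th:redint}(3). Both proofs are sound and both ultimately lean on Theorem \ref{th:redint}(3). The paper's route is the more elementary one (residue calculus and linear algebra only), whereas yours explains conceptually why these $n+1$ periods are exactly the obstructions to closed-form integration, and it reuses the same finite-orbit mechanism the paper itself deploys in Section \ref{sec:tobe} for integer exponents; the bookkeeping you flag at $z=0,1$ is routine given $e_0,e_1>-1$ and $e_0,e_1\nin\mathbbm{Z}$, which rule out logarithms and resonances and make $F(0),F(1)$ the fixed points of $\mathcal{M}_0,\mathcal{M}_1$.
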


Let us observe that this property is a generalisation of the previous one.
Indeed, $\mathcal{L}_0 = \mu$, for $\deg (J) = 0$, that is when $J = 1$. Here
the problem is that we do not find comparable simple closed formulae for the
linear forms $\mathcal{L}_i$. We mention this difficulty because the
$\mathcal{L}_i (P)$ got the flavour of some periods on some Abelian variety.
But we did not find this link precisely. This is probably the deep reason why
things are so complicated in our context. Maybe we did not find the proper
geometric space where the actual notions would get some more transparent
meaning.

\begin{proof}
  1. This is a direct consequence of relation (\ref{eq:gammaP}).
  
  2. For $P_0 = J^2$ the function $P \Omega / J^2 = \Omega$ has no residue at
  none of the $z_i$, hence
  \[ \mathcal{L}_0 (J^2) = \mu (1) \neq 0 \text{\hspace{1em} \tmop{and}
     \hspace{1em} } \mathcal{L}_i (J^2) = 0, \hspace{1em} \text{\tmop{for}
     \hspace{1em}} 1 \leqslant i \leqslant n . \]
  Now let us \ set $P_s \assign J^2 / (z - z_s)$, $\tmop{for} 1 \leqslant s
  \leqslant n$. The function $P_s \Omega / J^2 = \Omega / (z - z_s)$ has a non
  zero residue at $z_s$, \ and zero residue elsewhere. So, \ $\mathcal{L}_s
  (P_s) \neq 0$ and $\mathcal{L}_i (P_s) = 0$ for $i \neq s,$ and $1 \leqslant
  i \leqslant n$. From this it follows immediately that the $n + 1$ linear
  forms $\mathcal{L}_s$ are free on $\mathbbm{C} [z]$. According to point 1,
  their restriction to $\mathbbm{C}_n [z]$ form a basis of the dual space
  $\mathbbm{C}_n [z]^{\ast}$. As a consequence, any $\Lambda \in \mathbbm{C}_n
  [z]$ is zero iff it belongs to the common kernel of the linear forms. And we
  can therefore \ conclude that $\int \frac{P \Omega}{J^2}$ is algebraic iff
  for all $0 \leqslant i \leqslant n$, $\mathcal{L}_i (P) = 0$ according to
  Theorem \ref{th:redint}.
\end{proof}

\section{Reducing the virtual Abelianity of $\tmop{VE}_{2, \alpha}^{\gamma}$
and $\tmop{EX}_{2, \alpha, \beta}^{\gamma}$ to Ostrowski relations }

In this section we exhibit the integrals and the Ostrowski relations which are
going to govern the virtual Abelianity of the systems $\tmop{VE}_{2,
\alpha}^{\gamma} $ and \ $\tmop{EX}_{2, \alpha, \beta}^{\gamma}$. Next in the
two sections that follow we will test effectively these results.

\subsection{Getting obstruction thanks to the integrals $\Phi_{\nu}$ for $\nu
\in \{\alpha, \beta, \gamma\}$}

\begin{proposition}
  \label{prop:test1}With the notation of Section \ref{sec:hierarchy}, we get
  the following necessary conditions
  \begin{enumeratenumeric}
    \item If the differential Galois group of $\tmop{VE}_{2, \alpha}^{\gamma}$
    is virtually Abelian, then, we get two Ostrowski relations $\Phi +
    d_{\gamma} I_{\gamma}$ and $\Phi + d_{\alpha} I_{\alpha}$ are algebraic
    over $\mathbbm{C} (z)$ for some constants $d_{\gamma}$ and $d_{\alpha}$ in
    $\mathbbm{C}$. Here, $\Phi = \int \omega y_1 x^2_1$.
    
    \item If he differential Galois group of $\tmop{EX}_{2, \alpha,
    \beta}^{\gamma}$ is virtually Abelian, then we get three Ostrowski
    relations $\Phi + d_{\gamma} I_{\gamma}$, $\Phi + d_{\beta} I_{\beta}$ and
    $\Phi + d_{\alpha} I_{\alpha}$ are algebraic over $\mathbbm{C} (z)$. Here,
    $\Phi = \int \omega u_1 y_1 x_1$.
  \end{enumeratenumeric}
\end{proposition}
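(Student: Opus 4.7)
My plan is to reduce each claim to a direct application of Lemma \ref{lem:fund}, once the correct intermediate field has been set up.

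First I fix the base field: for part (1), set $K \assign \mathbbm{C}(z)[\omega, y_1, x_1]$; for part (2), set $K \assign \mathbbm{C}(z)[\omega, u_1, y_1, x_1]$. Under the standing hypothesis $G_\alpha^\circ \simeq G_\beta^\circ \simeq G_\gamma^\circ \simeq G_a$, Lemma \ref{lem:jordan} ensures that the distinguished solutions $x_1$, $y_1$, $u_1$ are algebraic over $\mathbbm{C}(z)$, so $K/K_0$ is algebraic. In this $K$, the integrand of $\Phi$ lies in $K$, and so do the derivatives $I_\alpha' = 1/x_1^2$, $I_\gamma' = 1/y_1^2$ (and, in the EX case, $I_\beta' = 1/y_1^2$, where $y_1$ is the algebraic solution attached to the $\beta$-equation). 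Using the representations $x_2 = x_1 I_\alpha$, $y_2 = y_1 I_\gamma$, $u_2 = u_1 I_\gamma$ from Section \ref{sec:hierarchy}, the second-level integrals take the canonical form
\[ \Phi_\nu = \int \Phi' \, I_\nu, \]
with $\nu \in \{\alpha, \gamma\}$ in the VE case and $\nu \in \{\alpha, \beta, \gamma\}$ in the EX case.

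Second, I transfer virtual Abelianity and apply the lemma. If the full Picard--Vessiot extension $F_2/K_0$ of the system is virtually Abelian, then $F_2/K$ is Picard--Vessiot with Galois group a finite-index subgroup of $\tmop{Gal}(F_2/K_0)$, hence also virtually Abelian. For each $\nu$, set $F_1 = K[I_\nu]$. Since $G_\nu^\circ \simeq G_a$, the primitive $I_\nu$ is transcendental over $K_0$; because $K/K_0$ is algebraic, $I_\nu$ remains transcendental over $K$. Therefore $K[I_\nu]/K$ is Picard--Vessiot with Galois group $G_a$ and Picard--Vessiot ring $T(F_1/K) = K[I_\nu]$. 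All hypotheses of Lemma \ref{lem:fund} are then in place for the tower $K \subset F_1 \subset F_2$ and the integral $\Phi_\nu = \int \Phi' I_\nu$, and the lemma yields
\[ \Phi - d_\nu I_\nu \in K \]
for some $d_\nu \in \mathbbm{C}$. Since $K/\mathbbm{C}(z)$ is algebraic, this says that $\Phi + d_\nu I_\nu$ (after reabsorbing the sign into $d_\nu$) is algebraic over $\mathbbm{C}(z)$. Applying this with $\nu \in \{\alpha, \gamma\}$ establishes part (1); applying it with $\nu \in \{\alpha, \beta, \gamma\}$ establishes part (2).

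The only non-routine point is the transcendence of each $I_\nu$ over the enlarged base $K$, needed to confirm $T(F_1/K) = K[I_\nu]$ before invoking Lemma \ref{lem:fund}: it amounts to the fact that adjoining the algebraic solutions of one homogeneous equation together with $\omega$ to $K_0$ cannot algebraise the $G_a$-primitive attached to a different homogeneous equation. Under the standing $G_a$ hypotheses this is immediate from the algebraicity of $K/K_0$, but it is the one step where the whole Picard--Vessiot tower, rather than a single equation, has to be considered at once.
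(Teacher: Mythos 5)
Your proof is correct and follows essentially the same route as the paper: for each index $\nu$ one forms the tower $K \subset K(I_\nu) \subset K(I_\nu)(\Phi_\nu) \subset F$, notes that virtual Abelianity passes to these sub-Picard--Vessiot extensions, and invokes Lemma \ref{lem:fund} to extract the Ostrowski relation $\Phi - d_\nu I_\nu \in K$, which gives algebraicity over $\mathbbm{C}(z)$ since $K$ is algebraic over $\mathbbm{C}(z)$. The extra care you take in checking that each $I_\nu$ stays transcendental over the enlarged base $K$ (so that $T(F_1/K)=K[I_\nu]$ and $G_1^\circ = G_a$) is a detail the paper leaves implicit, not a different argument.
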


\begin{proof}
  Let \ $F / K$be the Picard-Vessiot extension $\tmop{VE}^{\gamma}_{2,
  \alpha}$. According to Section \ref{sec:hierarchy}, we get the following
  inclusion
  \[ K \subset F_1^{\gamma} : = K (I_{\gamma}) \subset F_2^{\gamma} \assign
     F_1^{\gamma} (\Phi_{\gamma}) \subset F, \]
  and a similar one by changing $\gamma$ to $\alpha$. The fact that $F / K$ is
  virtually Abelian implies the same property for $F_2^{\gamma} / K,$ and
  $F_2^{\alpha} / K$. Then we can conclude thanks to Lemma \ref{lem:fund}.
  Similar arguments hold when dealing with $\tmop{EX}_{2, \alpha,
  \beta}^{\gamma}$.
\end{proof}

\subsection{Strategy and Game}

Two cases may a priory happen, when applying the above Proposition: $\Phi$ is
either transcendental either algebraic over $K$ or $\mathbbm{C} (z)$.

\subsubsection{When $\Phi$ is transcendental}

If this occurs, then the virtual Abelianity of the differential Galois group
of $S = \tmop{VE}_{2, \alpha}^{\gamma}$, or $S = \tmop{EX}_{2, \alpha,
\beta}^{\gamma}$ implies that the corresponding constants $d_{\gamma} $,
$d_{\beta}$ and $d_{\alpha}$ are non zero complex numbers. As a first
consequence we must also get Ostrowski relations between the integrals $I$.
For instance, $d_{\gamma} I_{\gamma} - d_{\alpha} I_{\alpha}$ is algebraic...

\subsubsection{When $\Phi$ is algebraic}

In this situation, the Ostrowski relations of Proposition \ref{prop:test1},
hold with $d_{\gamma} = d_{\beta} = d_{\alpha} = 0$ and the proposition is
helpless. Unfortunately, as we shall see in Section 5 and 6 below, $\Phi$ is
algebraic very oftenly. This is the reason why we have to find new necessary
conditions for the virtual Abelianity of differential Galois groups of
$\tmop{VE}_{2, \alpha}^{\gamma}$ and $\tmop{EX}_{2, \alpha, \beta}^{\gamma}$
in this case. This will be the purpose of the next subsection.

\subsection{Getting obstruction when $\Phi$ is algebraic}

In this subsection, we give the two criteria such that groups of \
$\tmop{VE}_{2, \alpha}^{\gamma}$ and $\tmop{EX}_{2, \alpha, \beta}^{\gamma}$
are virtually Abelian when $\Phi$ is algebraic.

\subsubsection{Integration by part and new second level integrals $\Psi_m$
associated to the $\Phi_m$}

Here, we keep notations and formulae given in Section \ref{sec:hierarchy}, \
and we assume that for each system \ $S = \tmop{VE}_{2, \alpha}^{\gamma}$ or
$S = \tmop{EX}_{2, \alpha, \beta}^{\gamma}$ its corresponding $\Phi$ is
algebraic, i.e., $\Phi \in K$. We know that the Picard-Vessiot extension $F /
F_1 = \tmop{PV} (S) / F_1$ is generated by second level integrals
$\Phi_{\nu}$, $\Phi_{i, j}$ and $\Phi_{i, j, l}$ which have, according to the
given hierarchy, one, two or three indices. For each multi-index $m \in \{\nu
; (i, j) ; (i, j, l)\}$, let us symbolically write $\Phi_m = \int \Phi' I^m$.
Integration by part gives
\[ \Phi_m = \Phi I^m - \Psi_m \text{ \hspace{1em} \tmop{with}} \hspace{1em}
   \Psi_m \assign \int \Phi (I^m)' . \]
Since $\Phi \in K$, $\Phi I^m \in T (F_1 / K) = K [I]$, and $F / F_1 =
\tmop{PV} (S) / F_1$ is generated by the corresponding $\Psi_m$.

Our motivation to introduce these new integrals $\Psi_m$ is the fact that \
the computation of their associated cocycles $\sigma (\Psi_m) - \Psi_m$ is \
simpler than for the cocycles $\sigma (\Phi_m) - \Phi_m$.

Now, we give the precise formulae for the given $\Psi_m$ for one, two and
three indices respectively. Next, we compute their cocycle, and give a general
property about some specific cocycles.

\paragraph{For one index}

Here we have $\Psi_{\nu} \assign \int \Phi I'_{\nu}$, for $\nu \in \{\alpha,
\beta, \gamma\}$.

\begin{remark}
  \label{rem:psi} Let us \ observe that the integrals
  \[ \Psi \assign \int \Phi I', \]
  are not well defined objects. This is because, as a primitive integral,
  $\Phi$ is only defined up to an additive constant. Therefore, two integrals
  $\Psi_1$ and $\Psi_2$ defined for the same ``$\Phi$'' modulo constant terms,
  and the same $I$, are related by a relation of the form
  \[ \Psi_2 = \Psi_1 + dI + e, \]
  where $(d, e) \in \mathbbm{C}^2$. Hence, having an Ostrowski relation
  $\Psi_1 + dI \in K$ is therefore equivalent to have a representative $\Psi_2
  = \Psi_1 + dI$ which is algebraic. We will therefore use both of the two
  expressions.
  
  Let us \ observe also that point (3) of Theorem \ref{prop:ve2test2} below is
  coherent when the two representatives $\Psi_{\gamma}$ and $\Psi_{\alpha}$
  are defined with respect to the same $\Phi$.
  
  Observe also that the $\Psi_{\nu}$ are integrals of first level with respect
  to $K$, \ since $\Phi$, \ and the $I_{\nu}'$ are in $K$.
\end{remark}

\paragraph{For two indices}

For $i \neq j$, we have
\begin{eqnarray*}
  \Psi_{i, j} & = & \int \Phi (I_i' I_j + I_j' I_i) = \int \Psi_i' I_j +
  \Psi_j' I_i,\\
  \Psi_{i, j} & = & \Psi_i I_j + \Psi_j I_i - M_{i, j} \text{\hspace{1em}
  \tmop{with} \hspace{1em}} M_{i, j} \assign \int \Psi_i I'_j + \Psi_j I'_i .
\end{eqnarray*}
In the particular case where $i = j = \alpha$, by simplicity we divide by two
the original $\Psi$ by setting
\begin{eqnarray*}
  \Psi_{2 \alpha} & = & \int \Phi I'_{\alpha} I_{\alpha} = \int \Psi_{\alpha}'
  I_{\alpha},\\
  \Psi_{2 \alpha} & = & \Psi_{\alpha} I_{\alpha} - X \text{\hspace{1em}
  \tmop{with}} \hspace{1em} X \assign \int \Psi_{\alpha} I_{\alpha}' .
\end{eqnarray*}

\paragraph{For three indices}

We get
\begin{eqnarray*}
  \Psi_{\gamma, 2 \alpha} & = & \int \Phi (I_{\gamma} I_{\alpha}^2)' = \int
  \Phi (I_{\gamma}' I_{\alpha}^2 + 2 I_{\alpha}' I_{\alpha} I_{\gamma}),\\
  \Psi_{\gamma, 2 \alpha} & = & \int \Psi_{\gamma}' I_{\alpha}^2 + 2
  \Psi_{\alpha}' I_{\alpha} I_{\gamma}',\\
  \Psi_{\alpha, \beta, \gamma} & = & \int \Phi (I_{\alpha} I_{\beta}
  I_{\gamma})' = \int \Phi (I'_{\alpha} I_{\beta} I_{\gamma} + I_{\alpha}
  I'_{\beta} I_{\gamma} + I_{\alpha} I_{\beta} I'_{\gamma}),\\
  \Psi_{\alpha, \beta, \gamma} & = & \int \Psi_{\alpha}' I_{\beta} I_{\gamma}
  + \Psi_{\beta}' I_{\gamma} I_{\alpha} + \Psi_{\gamma}' I_{\alpha} I_{\beta}
  .
\end{eqnarray*}

\paragraph{General formulae for the cocycles $\mathcal{C}_m (\sigma) \assign
\sigma (\Psi_m) - \Psi_m$ when $\sigma \in G^{\circ}$}

Here $G$ denotes the corresponding Galois group. In order to simplify
notations we may some time write \ $c_{\nu}$ instead of $c_{\nu} (\sigma)$ in
the relations $\sigma (I_{\nu}) = I_{\nu} + c_{\nu} (\sigma)$, for $\sigma \in
G^{\circ}$. We get the following formulae
\begin{eqnarray*}
  \mathcal{C}_{i, j} (\sigma) & = & c_i \Psi_j + c_j \Psi_j + l_{i, j}
  (\sigma),\\
  \mathcal{C}_{2 \alpha} (\sigma) & = & 2 c_{\alpha} \Psi_{\alpha} + l_{2
  \alpha} (\sigma),\\
  \mathcal{C}_{\gamma, 2 \alpha} (\sigma) & = & 2 c_{\alpha} \Psi_{\alpha,
  \gamma} + 2 c_{\gamma} \Psi_{2 \alpha} + c_{\alpha}^2 \Psi_{\gamma} + 2
  c_{\alpha} c_{\gamma} \Psi_{\alpha} + l_{\gamma, 2 \alpha} (\sigma),\\
  \mathcal{C}_{\alpha, \beta, \gamma} (\sigma) & = & c_{\alpha} (\sigma)
  \Psi_{\beta, \gamma} + c_{\beta} (\sigma) \Psi_{\gamma, \alpha} + c_{\gamma}
  (\sigma) \Psi_{\alpha, \beta} + c_{\alpha} c_{\beta} \Psi_{\gamma} +
  c_{\beta} c_{\gamma} \Psi_{\alpha} + c_{\gamma} c_{\alpha} \Psi_{\beta} +
  l_{\alpha, \beta, \gamma} (\sigma),
\end{eqnarray*}
where the respective $l_m$ are function from $G^{\circ}$ to $\mathbbm{C}.$

We \ make only \ the computation for $\mathcal{C}_{\gamma, 2 \alpha} (\sigma)$
since the others are similar.

Since $\Psi_{\gamma, 2 \alpha}' = \Phi (I_{\gamma}' I_{\alpha}^2 + 2
I_{\alpha}' I_{\alpha} I_{\gamma})$, we get
\begin{eqnarray*}
  \sigma (\Psi_{\gamma, 2 \alpha}') & = & \Phi I'_{\gamma}  (I^2_{\alpha} + 2
  c_{\alpha} I_{\alpha} + c^2_{\alpha}) + 2 \Phi I_{\alpha}'  (I_{\alpha} +
  c_{\alpha})  (I_{\gamma} + c_{\gamma}),\\
  \sigma (\Psi_{\gamma, 2 \alpha}') & = & \Psi_{\gamma, 2 \alpha}' + 2
  c_{\alpha} \Psi'_{\alpha, \gamma} + 2 c_{\gamma} \Psi'_{2 \alpha} +
  c_{\alpha}^2 \Psi'_{\gamma} + 2 c_{\alpha} c_{\gamma} \Psi'_{\alpha},\\
  \mathcal{C}_{\gamma, 2 \alpha} (\sigma) & = & 2 c_{\alpha} \Psi_{\alpha,
  \gamma} + 2 c_{\gamma} \Psi_{2 \alpha} + c_{\alpha}^2 \Psi_{\gamma} + 2
  c_{\alpha} c_{\gamma} \Psi_{\alpha} + l_{\gamma, 2 \alpha} (\sigma) .
\end{eqnarray*}
The last relation has been obtained by integrating the previous one. As a
consequence the function $l_{\gamma, 2 \alpha} : G^{\circ} \rightarrow
\mathbbm{C}$ has constant values, but it is far to being a group morphism.
This can be simply seen if we translate for $l$ the cocycle relation for
$\mathcal{C}$.

\paragraph{Cocycles of degree in $I$ and symmetric matrices}

When proving the two theorems below, we will get explicit polynomial
expressions of the above cocycles, since a necessary condition for the virtual
Abelianity of $G$ is going to be
\[ \forall \sigma \in G^{\circ}, \hspace{2em} \mathcal{C}_m (\sigma) \in T
   (F_1 / K) = K [I] \assign K [I_1, \ldots, I_n] . \]
In practice, these cocycles are going to be of degree one or two in $I$.
Precisely, let us assume that $F_1 / K = K (I_1, \ldots, I_n)$, is generated
by $n$ independent first level integrals. Let us denote by $I^T \assign (I_1,
\ldots, I_n)$ and $C^T (\sigma) \assign (c_1 (\sigma), \ldots, c_n (\sigma))$.

We say that a cocycle $\mathcal{C} (\sigma) = \sigma (\Psi) - \Psi${\tmem{ is
of degree one in}} $I$, if it has \ the form
\[ \mathcal{C} (\sigma) = C (\sigma)^T AI + C (\sigma)^T  \tilde{A} C (\sigma)
   + F^T C (\sigma) + l (\sigma), \]
where $A \in M_n (K)$ and $\tilde{A} \in M_n (K)$ are $n \times n$ matrices,
$F \in K^n$ and, $l$ is a constant valued function.

In the particular case where $A$ and $\tilde{A}$ are constant matrices (i.e.,
belong to $M_n (\mathbbm{C})$), the mapping $\sigma \mapsto C (\sigma)^T 
\tilde{A} C (\sigma)$ is a constant valued function. So, the general
expression of {\tmem{the \ degree one cocycles with constant matrix}} \ can be
more simply written
\[ \mathcal{C} (\sigma) = C (\sigma)^T AI + C (\sigma)^T F + l (\sigma) . \]
With these notations, we get the following property which simplifies the proof
of the theorems, enlightening a link between the abelianity of a group, \ and
the general idea of {\tmem{symmetry}} which is realised here by symmetric
matrices.

\begin{lemma}
  \label{lem:cocycle}Let us assume that $F_1 / K = K (I_1, \ldots, I_n)$, is
  generated by $n$ independent first level integrals over $K$.
  \begin{enumeratenumeric}
    \item Let
    \[ \mathcal{C} (\sigma) = C (\sigma)^T AI + C (\sigma)^T  \tilde{A} C
       (\sigma) + F^T C (\sigma) + l (\sigma) \]
    be a general cocycle of degree one in $I$. Then, $\mathcal{C}$ coincides
    with a coboundary iff $A$ is a symmetric matrix, and $A - 2 \tilde{A} \in
    M_n (\mathbbm{C})$.
    
    \item Let $\mathcal{C} (\sigma) = \sigma (\Psi) - \Psi = C (\sigma)^T AI +
    C (\sigma)^T F + l (\sigma)$ be a degree one cocycle with constant
    matrices. Then $\mathcal{C}$ coincides with a coboundary iff $A$ is
    symmetric. If it is the case, then the corresponding second level integral
    $\Psi$ can be computed in a closed form thanks to a quadratic expression
    of the form
    \[ \Psi = \frac{1}{2} I^T AI + F^T I + J \text{\tmop{with}} J' \in K. \]
    \item For $n = 1$ every degree one cocycle with constant matrix is a
    coboundary.
  \end{enumeratenumeric}
\end{lemma}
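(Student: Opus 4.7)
The plan is to work with the Picard-Vessiot ring $T(F_1/K) = K[I_1, \ldots, I_n]$, which is valid because independence of the $I_j$ together with $I_j' \in K$ makes $F_1/K$ Picard-Vessiot with $G_1^\circ$ a vector group: each $\sigma \in G_2^\circ$ acts on $I := (I_1, \ldots, I_n)^T$ by $\sigma(I) = I + C(\sigma)$, and the associated map $C : G_2^\circ \to \mathbbm{C}^n$ is surjective. A coboundary is a map $\sigma \mapsto \sigma(R) - R + h(\sigma)$ with $R \in K[I]$ and $h : G_2^\circ \to \mathbbm{C}$; by the discussion preceding Lemma \ref{lem:fund}, checking that $\mathcal{C}$ is an extended coboundary suffices, since $\mathcal{C}$ is a cocycle to begin with.

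For the sufficiency direction of (1), my ansatz is $R := \tfrac{1}{2}\, I^T A I + F^T I \in K[I]$. Using that $\sigma$ fixes $K$, that $\sigma(I) = I + C(\sigma)$, and that the symmetry of $A$ converts $I^T A\, C + C^T A\, I$ into $2\, C^T A\, I$, one computes
\[
\sigma(R) - R \;=\; C^T A\, I + \tfrac{1}{2}\, C^T A\, C + F^T C.
\]
Subtracting this from $\mathcal{C}(\sigma)$ yields $-\tfrac{1}{2}\, C^T(A - 2\tilde{A})\, C + l(\sigma)$, and the hypothesis $A - 2\tilde{A} \in M_n(\mathbbm{C})$ turns this into a $\mathbbm{C}$-valued function $h(\sigma)$. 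Hence $\mathcal{C}$ is an extended coboundary, and therefore a coboundary.

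Conversely, suppose $\mathcal{C}(\sigma) = \sigma(S) - S + h(\sigma)$ with $S \in K[I]$. The presence in $\mathcal{C}$ of the term $C^T A\, I$, linear in $I$ with non-constant $C$-dependence, forces $\deg_I S = 2$; writing $S = \tfrac{1}{2}\, I^T B I + G^T I + H$ with $B \in M_n(K)$ symmetric (only the symmetric part of $B$ contributes to $I^T B I$), the same computation gives $\sigma(S) - S = C^T B\, I + \tfrac{1}{2}\, C^T B\, C + G^T C$. Matching the $I$-linear parts as identities in $(C, I)$ forces $B = A$, so $A$ is symmetric. Matching the pure-$C$ part yields
\[
C^T\!\bigl(\tfrac{A}{2} - \tilde{A}\bigr)\, C - (F - G)^T C \;\in\; \mathbbm{C} \qquad \text{for every } C \in \mathbbm{C}^n.
\]
Specialising $C$ to $\varepsilon e_i$ and then to $\varepsilon e_i + \mu e_j$ and equating the coefficients of the various monomials in $\varepsilon, \mu$ forces $F - G \in \mathbbm{C}^n$ and the symmetric part of $A - 2\tilde{A}$ to lie in $M_n(\mathbbm{C})$; after replacing $\tilde{A}$ by its symmetric part (which leaves $C^T \tilde{A}\, C$ unchanged), this is the announced condition.

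Statement (2) is the specialisation of (1) with $\tilde{A} = 0$: the condition $A - 2\tilde{A} = A \in M_n(\mathbbm{C})$ is then automatic, so only symmetry of $A$ is needed. For the closed form, taking $R = \tfrac{1}{2}\, I^T A I + F^T I$ in $\sigma(\Psi) - \Psi = \sigma(R) - R + h(\sigma)$, the relation $\sigma(\Psi - R) = (\Psi - R) + h(\sigma)$ differentiates to $\sigma((\Psi - R)') = (\Psi - R)'$ for every $\sigma \in G_2^\circ$, so $J := \Psi - R$ satisfies $J' \in K$. Statement (3) follows at once from (2), since every $1 \times 1$ matrix is symmetric. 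The delicate step is the necessity direction of (1): the identity one obtains is a family of elements of $K$ parametrised by $C \in \mathbbm{C}^n$, and extracting the matrix-level condition $A - 2\tilde{A} \in M_n(\mathbbm{C})$ requires careful bookkeeping to account for the fact that only the symmetric part of $\tilde{A}$ is ever visible to the quadratic form $C^T \tilde{A}\, C$.
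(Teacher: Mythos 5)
Your proof is correct and follows essentially the same route as the paper: test the cocycle against the coboundary of a quadratic polynomial in $I$, read off the symmetry of $A$ from the $I$-linear part, and extract the constancy of $A-2\tilde{A}$ from the purely quadratic part in $C(\sigma)$, using that $C(\sigma)$ spans $\mathbbm{C}^n$. The only (minor) divergence is that the paper obtains the last condition by differentiating the residual identity with respect to $z$, whereas you specialise $C$ to basis vectors; your explicit remark that only the symmetric part of $\tilde{A}$ is visible is a point the paper glosses over.
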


\begin{proof}
  (1) Since $\mathcal{C}$ is of degree one in $I$, if it coincides with a
  coboundary of the form $\Delta P + h (\sigma)$, then $P (I)$ must be
  quadratic in $I$. It can therefore be written into the form
  \[ P (I) = I^T SI + B^T I, \]
  for some symmetric matrix $S \in M_n (K)$, and $B \in K^n$. The relation
  $\mathcal{C} (\sigma) = \Delta P + h (\sigma) = P ( I + c) - P ( I) + h (
  \sigma)$, is therefore equivalent to having
  \[ I^T AC (\sigma) + C (\sigma)^T  \tilde{A} C (\sigma) + F^T C (\sigma) + l
     (\sigma) = 2 I^T SC (\sigma) + C (\sigma)^T SC (\sigma) + B^T C (\sigma)
     + h (\sigma) . \]
  For a fixed value of $\sigma$, both side of this equation are affine linear
  forms in $I$ with coefficients in $K$. Hence, we must have
  \[ AC (\sigma) = 2 SC (\sigma), \]
  for all $\sigma \in G^{\circ}$. Since $C (\sigma)$ span all $\mathbbm{C}^n$,
  when $\sigma \in G^{\circ}$, we have that $A = 2 S$ is symmetric. Moreover,
  the previous equation is reduced to
  \[ C (\sigma)^T  ( \tilde{A} - S) C (\sigma) + (F - B)^T C (\sigma) = h
     (\sigma) - l (\sigma) . \]
  By derivating both sides of this equation, we obtain
  \[ C (\sigma)^T  ( \tilde{A}' - S') C (\sigma) + (F' - B')^T C (\sigma) =
     0, \text{\hspace{1em} \tmop{for} \tmop{all}} \sigma \in G^{\circ} . \]
  This is therefore equivalent to $\tilde{A}' - S' = 0$ and $F' - B' = 0$.
  That is to having $A - 2 \tilde{A} \in M_n (\mathbbm{C})$. Conversely, if
  those two conditions are satisfied, we just have to choose $B = F$ to get
  the desired coboundary.
  
  (2) When $\mathcal{C}$ is of degree one with constant matrix, it coincides
  with a coboundary iff $A$ is constant and symmetric, since in this case
  there is no condition on $\tilde{A}$. Conversely, if $A$ is symmetric, the
  computation above with $\tilde{A} = 0$ \ shows that $\Psi$ and $\tilde{\Psi}
  \assign \frac{1}{2} I^T AI + F^T I$, have equal cocycles up to a constant
  valued function, therefore the difference $\Psi - \tilde{\Psi}$ is a first
  level integral.
  
  (3) is obvious since a $1 \times 1$ matrix is always symmetric.
\end{proof}

Let's observe that the computations of the second level integral $\Phi_1$
appearing in Lemma \ref{lem:fund}, is a particular case of points (2) and (3)
of the above lemma.

\subsubsection{The criteria for the virtual Abelianity of $\tmop{EX}_{2,
\alpha, \beta}^{\gamma}$ when $\Phi$ is algebraic}

We decided to begin with $\tmop{EX}_{2, \alpha, \beta}^{\gamma}$ because here,
the role played by the integrals $I_{\nu}$ is symmetric. This is not the case
when dealing with $\tmop{VE}_{2, \alpha}^{\gamma}$. As a consequence, although
they proceed with the same methods, the proof of Theorem \ref{prop:EXtest2} is
more transparent than the proof of Theorem \ref{prop:ve2test2} below.

For the statement and proof of the following result, we use the notations
introduced above. The first point of the theorem gives a necessary condition
for the virtual Abelianity of $\tmop{EX}_{2, \alpha, \beta}^{\gamma}$. Having
expressed this first necessary condition, the next three points will give
sufficient conditions for the virtual Abelianity of $\tmop{EX}_{2, \alpha,
\beta}^{\gamma}$. Each of them depends on the degree of dependence of the
integrals $I_{\gamma}$, $I_{\beta}$ and $I_{\alpha}$. Notice also that also
that in the formulae $G \subset K^p$, with $p = 2$ or 3 below, the letter $G$
denotes a $p$-component vector which has nothing to do with the Galois group.

\begin{theorem}
  \label{prop:EXtest2}With the notation of Section \ref{sec:hierarchy}, let us
  assume that $\Phi = \int \omega u_1 y_1 x_1$ is algebraic, i.e., $\Phi \in
  K$.
  \begin{enumeratenumeric}
    \item Let us \ denote by $\Psi = (\Psi_{\alpha}, \Psi_{\beta},
    \Psi_{\gamma})^T$, and similar notations for the three component vector
    $I$. If $\tmop{EX}_{2, \alpha, \beta}^{\gamma}$ has \ virtually Abelian
    differential Galois group, then there exists an Ostrowski relation between
    $\Psi$ and $I$ of the form :
    \[ \Psi = DI + F \in K^3, \]
    for some constant $3 \times 3$ matrix $D$ and $F \in K^3$.
    
    \item If the integrals $I_{\alpha}, I_{\beta}, I_{\gamma}$ are independent
    over $K$, then $\tmop{EX}_{2, \alpha, \beta}^{\gamma}$ has a virtually
    Abelian differential Galois group iff the two following conditions are
    satisfied
    \begin{enumeratealpha}
      \item There exists a unique determination of $\Phi$ modulo constants
      such that each $\Psi_{\mu}$ for $\mu \in \{\gamma ; \beta ; \alpha\}$ is
      algebraic. This correspond to having $D = 0$ in point (1).
      
      \item For this determination of $\Phi$ the integrals $M_{i, j}$ are of
      first level, and can be expanded into the form
      \[ M = EI + G, \]
      for some constant $3 \times 3$ symmetric matrix $E$, where $G \subset
      K^3$,and \ $M \assign (M_{\beta, \gamma}, M_{\gamma, \alpha}, M_{\alpha,
      \beta})^T$.
    \end{enumeratealpha}
    \item If the integrals $I_{\alpha}, I_{\beta}, I_{\gamma}$ form a system
    of rank one over $K$, that is if we have Ostrowski relations of the forms
    $I_{\beta} - \theta_{\beta} I_{\alpha} \in K$ and $I_{\gamma} -
    \theta_{\gamma} I_{\alpha} \in K$, then $\tmop{EX}_{2, \alpha,
    \beta}^{\gamma}$ has a virtually Abelian differential Galois group iff
    condition (1) holds and, for the first level integrals $N_{i, j}$ defined
    by equation (\ref{eq:rank1}) below, we get an Ostrowski relation of the
    form
    \[ N_{\beta, \gamma} + \theta_{\beta} N_{\gamma, \alpha} +
       \theta_{\gamma} N_{\alpha, \beta} = eI_{\alpha} + g, \]
    with $e \in \mathbbm{C}$ and $g \in K$.
    
    \item If the integrals $I_{\alpha}, I_{\beta}, I_{\gamma}$ form a system
    of rank two over $K$, that is if we get one Ostrowski relation of the form
    \[ I_{\gamma} - bI_{\beta} - aI_{\alpha} \in K, \]
    and $I_{\alpha}, I_{\beta}$ are independent. Then, $\tmop{EX}_{2, \alpha,
    \beta}^{\gamma}$ has a virtually Abelian differential Galois group iff the
    following conditions are satisfied
    \begin{enumeratealpha}
      \item There exists a choice of $\Phi$ modulo constants such that (1) can
      be written
      \[ \Psi_{\alpha} = xI_{\beta} + f_{\alpha}, \Psi_{\beta} = yI_{\alpha}
         + f_{\beta}, \Psi_{\gamma} + b \Psi_{\beta} + a \Psi_{\alpha} \in K.
      \]
      with $x$ and $y$ in $\mathbbm{C}$.
      
      \item For the first level integrals $N_{i, j}$ defined by equation
      (\ref{eq:rank2}) below, we have a relation of the form
      \[ \left(\begin{array}{c}
           N_{\beta, \gamma} + aN_{\alpha, \beta}\\
           N_{\alpha, \gamma} + bN_{\alpha, \beta}
         \end{array}\right) = E \left(\begin{array}{c}
           I_{\alpha}\\
           I_{\beta}
         \end{array}\right) + G. \]
      where, $E$ is a $2 \times 2$ constant symmetric matrix and $G \in K^2$.
    \end{enumeratealpha}
  \end{enumeratenumeric}
\end{theorem}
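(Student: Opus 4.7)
The strategy is to translate virtual Abelianity of $\tmop{EX}_{2,\alpha,\beta}^\gamma$, via Theorem~\ref{thliouville} and the cohomological criterion, into requirements that each cocycle $\mathcal{C}(\sigma)=\sigma(\Psi)-\Psi$ attached to the one-, two- and three-index integrals $\Psi_\mu$, $\Psi_{i,j}$, $\Psi_{\alpha,\beta,\gamma}$ of Section~\ref{sec:hierarchy} be (i) valued in $K[I_\alpha,I_\beta,I_\gamma]=T(F_1/K)$ and (ii) a coboundary in the cohomological sense. Condition (i) produces Ostrowski relations via the usual Ostrowski theorem on the corresponding primitive integrals, while (ii) is controlled by the symmetric-matrix conditions of Lemma~\ref{lem:cocycle}.

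\emph{Proof of (1).} The two-index cocycle computed above is $\mathcal{C}_{i,j}(\sigma)=c_i\Psi_j+c_j\Psi_i+l_{i,j}(\sigma)$. Requirement (i), combined with the fact that $G^\circ$ acts as translations $I_\nu\mapsto I_\nu+c_\nu$ with the triple $(c_\alpha,c_\beta,c_\gamma)$ ranging over a linear subspace of $\mathbbm{C}^3$, allows one, by varying $\sigma$ so as to isolate a single coordinate $c_\nu$, to extract $\Psi_\mu\in K[I]$. Since $\Psi_\mu$ is itself a first level integral over $K$ (its derivative $\Phi I'_\mu$ lies in $K$), Ostrowski forces $\Psi_\mu=\sum_\nu D_{\mu,\nu}I_\nu+F_\mu$ with $D_{\mu,\nu}\in\mathbbm{C}$ and $F_\mu\in K$, which in matrix form reads $\Psi=DI+F$.

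\emph{Proof of (2).} In the independent case the coefficients $(c_\alpha,c_\beta,c_\gamma)$ sweep all of $\mathbbm{C}^3$, so $D$ is uniquely determined. Plugging the expansion of $\Psi_\mu$ into $\mathcal{C}_{i,j}$, its associated matrix $A^{(i,j)}$ from Lemma~\ref{lem:cocycle} has constant entries $A^{(i,j)}_{k,\nu}=\delta_{k,i}D_{j,\nu}+\delta_{k,j}D_{i,\nu}$. By Lemma~\ref{lem:cocycle}(2), the cocycle is a coboundary iff each $A^{(i,j)}$ is symmetric; requiring this over all three pairs kills the off-diagonal of $D$ and equalises its diagonal, giving $D=d\cdot\tmop{Id}$. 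Replacing $\Phi$ by $\Phi-d$ shifts $\Psi_\mu\mapsto\Psi_\mu-dI_\mu$ and normalises $D$ to zero, so $\Psi_\mu\in K$; this proves (a). For (b), with $\Psi_\mu\in K$, the identity $\Psi_{i,j}=\Psi_iI_j+\Psi_jI_i-M_{i,j}$ shows that (i) on the cocycle of $\Psi_{i,j}$ reduces to $M_{i,j}\in K[I]$, and a further Ostrowski application yields $M=EI+G$ with constant $E$. Substituting this and the formula for $\Psi_{i,j}$ into the three-index cocycle $\mathcal{C}_{\alpha,\beta,\gamma}$ produces a degree-one cocycle whose quadratic-in-$c$ matrix $\tilde{A}$ comes from the $c_ic_j\Psi_k$ terms and whose linear matrix $A$ comes from the $\Psi_{i,j}$ contributions. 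A direct calculation shows that the $\Psi_\mu$-part of $A$ exactly cancels $2\tilde{A}$, so the condition $A-2\tilde{A}\in M_n(\mathbbm{C})$ from Lemma~\ref{lem:cocycle}(1) is automatic, and symmetry of $A$, after identifying the row index $(i,j)$ with the complementary column index $\{\alpha,\beta,\gamma\}\setminus\{i,j\}$, becomes exactly symmetry of the $3\times 3$ matrix $E$.

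\emph{Proofs of (3) and (4).} These follow the same scheme but with the coefficient vector $(c_\alpha,c_\beta,c_\gamma)$ constrained by the Ostrowski relations among the $I_\nu$'s: in (3) it equals $c_\alpha(1,\theta_\beta,\theta_\gamma)$, and in (4) it lies in the plane $c_\gamma=ac_\alpha+bc_\beta$. These linear relations mean that $D$ in (1) is determined only up to the kernel of the restriction map, and the remaining freedom is absorbed into the choice of $\Phi$ and the diagonal of $D$ to give the reduced forms of $\Psi_\mu$ stated in (3) and (4)(a). The two- and three-index cocycle conditions descend to the quotient, producing the symmetric-matrix conditions on the combinations $N_{i,j}$ of the $M_{i,j}$ that remain nontrivial modulo the relations. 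The main obstacle throughout is bookkeeping in the three-index cocycle: one must carefully track how substituting $\Psi_{i,j}=\Psi_iI_j+\Psi_jI_i-M_{i,j}$ and the Ostrowski form of $M$ into $\mathcal{C}_{\alpha,\beta,\gamma}$ produces a cocycle whose symmetry matches the stated one, and verify that $A-2\tilde{A}\in M_n(\mathbbm{C})$ is automatic in each case.
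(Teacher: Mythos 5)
Your overall strategy coincides with the paper's: translate virtual Abelianity through Theorem \ref{thliouville} into the two requirements that each cocycle take values in $T(F_1/K)=K[I]$ and be a coboundary, and control the latter with the symmetric-matrix criterion of Lemma \ref{lem:cocycle}. Points (1) and (2) essentially reproduce the paper's argument, with two local inaccuracies. In (1) you propose to ``vary $\sigma$ so as to isolate a single coordinate $c_\nu$''; this is impossible when the $I_\nu$ are dependent (then $(c_\alpha,c_\beta,c_\gamma)$ is confined to a line or a plane), and point (1) must hold in all rank cases since (3) and (4) rely on it. The correct extraction inverts the $3\times 3$ system $C\Psi\in T(F_1/K)^3$ whose determinant $2c_\alpha c_\beta c_\gamma$ is nonzero for some $\sigma$ regardless of the rank. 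In (2)(b), the membership $M_{i,j}\in K[I]$ does not follow from ``(i) on the cocycle of $\Psi_{i,j}$'' --- that condition concerns $\sigma(\Psi_{i,j})-\Psi_{i,j}$ and is automatically satisfied once the $\Psi_\mu$ lie in $K$; it is obtained by applying requirement (i) to the three-index cocycle $\mathcal{C}_{\alpha,\beta,\gamma}$, whose linear-in-$C(\sigma)$ part lets you extract each $\Psi_{i,j}\in T(F_1/K)$ and hence each $M_{i,j}=\Psi_iI_j+\Psi_jI_i-\Psi_{i,j}$.

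The genuine gap is in (3) and (4), which you only sketch. First, the specific content of those statements --- the normal form $\Psi_\alpha=xI_\beta+f_\alpha$, $\Psi_\beta=yI_\alpha+f_\beta$ in (4)(a), and the precise combinations $N_{\beta,\gamma}+\theta_\beta N_{\gamma,\alpha}+\theta_\gamma N_{\alpha,\beta}$ in (3) and $N_{\beta,\gamma}+aN_{\alpha,\beta}$, $N_{\alpha,\gamma}+bN_{\alpha,\beta}$ in (4)(b) --- is exactly the output of the bookkeeping you defer, so none of the stated conditions is actually derived. Second, and more importantly, your plan to conclude by checking symmetry of $A$ and constancy of $A-2\tilde A$ via Lemma \ref{lem:cocycle} cannot work as written in these cases: once the $\Psi_\nu$ are affine in $I$ (rank one or two), the closed forms of the $\Psi_{i,j}$ are quadratic in $I$, so $\mathcal{C}_{\alpha,\beta,\gamma}$ is of degree two in $I$ and falls outside the scope of Lemma \ref{lem:cocycle}. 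The paper's proof first subtracts the coboundary of a cubic, using $\Delta(I^3/3)=cI^2+c^2I+c^3/3$, to reduce the cocycle to degree one before any symmetry condition can be read off; this reduction step is absent from your proposal. Finally, in (3) the two-index cocycles have constant $1\times 1$ matrices and are therefore automatically coboundaries, so the additional condition there is the single Ostrowski relation $N=eI_\alpha+g$ coming from requirement (i), not a ``symmetric-matrix condition'' as your closing summary suggests.
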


\begin{proof}
  (1) Let us set $F / K = \tmop{PV} (\tmop{EX}_{2, \alpha, \beta}^{\gamma}) /
  K,$ and assume that $G$ is virtually Abelian. According to Theorem
  \ref{thliouville}, since each $\Psi_{i, j} \in F$, the cocycles
  \[ \mathcal{C}_{i, j} (\sigma) = \sigma (\Psi_{i, j}) - \Psi_{i, j} = c_i
     (\sigma) \Psi_j + c_j (\sigma) \Psi_i + l_{i, j} (\sigma) \in T (F_1 / K)
     = K [I] \assign K [I_{\gamma}, I_{\beta}, I_{\alpha}] . \]
  By considering all those possible relations with $i \neq j$, we get three
  relations which can be written into matrix form
  \[ \left(\begin{array}{ccc}
       0 & c_{\gamma} (\sigma) & c_{\beta} (\sigma)\\
       c_{\gamma} (\sigma) & 0 & c_{\alpha} (\sigma)\\
       c_{\beta} (\sigma) & c_{\alpha} (\sigma) & 0
     \end{array}\right)  \left(\begin{array}{c}
       \Psi_{\alpha}\\
       \Psi_{\beta}\\
       \Psi_{\gamma}
     \end{array}\right) = C \Psi \in T (F_1 / K)^3 . \]
  But $\det (C) = 2 c_{\alpha} (\sigma) c_{\beta} (\sigma) c_{\gamma} (\sigma)
  \neq 0$, for some $\sigma \in G^{\circ}$. So, by inverting this system we
  get that all $\Psi_{\mu} \in T (F_1 / K)$. So, $\Psi_{\mu}, I_{\alpha},
  I_{\beta}, I_{\gamma}$ are four dependant integrals of first level over $K$,
  from which we can deduce the desired Ostrowski relations.
  
  (2) By plugging each $\Psi_i = \sum_{\mu} d_{i, \mu} I_{\mu} + f_{\mu}$ into
  the above $\mathcal{C}_{i, j} (\sigma)$, we get that each $\sigma (\Psi_{i,
  j}) - \Psi_{i, j}$ is a cocycle of degree one with constant matrix. Indeed,
  let us do this for $\Psi_{\alpha, \beta}$:
  \begin{eqnarray*}
    c_{\alpha} \Psi_{\beta} + c_{\beta} \Psi_{\alpha} & = & (c_{\alpha},
    c_{\beta}, c_{\gamma})  \left(\begin{array}{ccc}
      d_{\beta, \alpha} & d_{\beta, \beta} & d_{\beta, \gamma}\\
      d_{\alpha, \alpha} & d_{\alpha, \beta} & d_{\alpha, \gamma}\\
      0 & 0 & 0
    \end{array}\right)  \left(\begin{array}{c}
      I_{\alpha}\\
      I_{\beta}\\
      I_{\gamma}
    \end{array}\right) + (c_{\alpha}, c_{\beta}, c_{\gamma}) 
    \left(\begin{array}{c}
      f_{\beta}\\
      f_{\alpha}\\
      0
    \end{array}\right)\\
    \mathcal{C}_{\alpha, \beta} (\sigma) & = & C (\sigma)^T A_{\alpha, \beta}
    I + C (\sigma)^T F_{\alpha, \beta} + l_{\alpha, \beta} (\sigma) .
  \end{eqnarray*}
  According to Theorem \ref{thliouville}, the Abelianity of $G^{\circ}$
  implies that the cocycle must be a coboundary. From Lemma \ref{lem:cocycle}
  the equivalent condition is that $A_{\alpha, \beta}$ is a symmetric matrix.
  As a consequence, we get that
  \[ d_{\beta, \gamma} = d_{\alpha, \gamma} = 0 \text{ \hspace{1em}
     \tmop{and} \hspace{1em} } d_{\alpha, \alpha} = d_{\beta, \beta} . \]
  Since the same arguments hold by considering $\mathcal{C}_{\beta, \gamma}$
  and $\mathcal{C}_{\gamma, \alpha}$, we deduce that $D = d \Iota_3$, for some
  $d \in \mathbbm{C}$. As a consequence, we can write
  \[ \Psi_{\mu} = \int \Phi I_{\mu}' = dI_{\mu} + f_{\mu} \hspace{1em}
     \Longrightarrow \hspace{1em} \int (\Phi - d) I_{\mu}' = f_{\mu} \in K, \]
  for \ $\mu \in \{\alpha, \beta, \gamma\}$. Therefore, if we substitute,
  $\Phi - d$ to $\Phi$ in the definition of the corresponding $\Psi_{\mu}$,
  then they all are algebraic. This proves point (a). Let us assume from now
  that we are in this situation. Then, each $M_{i, j} = \int \Psi_i I'_j +
  \Psi_j I'_i$ is a first level integral.
  
  Since, $\Psi_{\alpha, \beta, \gamma} = \int \Psi'_{\gamma} I_{\beta}
  I_{\alpha} + \Psi'_{\beta} I_{\gamma} I_{\alpha} + \Psi'_{\alpha} I_{\gamma}
  I_{\beta}$ is a second level integral that belongs to $F$ its associated
  cocycles $\mathcal{C}_{\alpha, \beta, \gamma} (\sigma) \in T (F_1 / K)$ for
  all $\sigma \in G^{\circ}$. But
  \[ \mathcal{C}_{\alpha, \beta, \gamma} (\sigma) = c_{\alpha} (\sigma)
     \Psi_{\beta, \gamma} + c_{\beta} (\sigma) \Psi_{\gamma, \alpha} +
     c_{\gamma} (\sigma) \Psi_{\alpha, \beta} + c_{\alpha} c_{\beta}
     \Psi_{\gamma} + c_{\beta} c_{\gamma} \Psi_{\alpha} + c_{\gamma}
     c_{\alpha} \Psi_{\beta} + l_{\alpha, \beta, \gamma} (\sigma) . \]
  As a consequence, each $\Psi_{i, j} \in T (F_1 / K)$ since the vectors $C
  (\sigma) = (c_{\alpha} (\sigma), c_{\beta} (\sigma), c_{\gamma} (\sigma))^T$
  ranges $\mathbbm{C}^3$, when $\sigma$ ranges $G^{\circ}$. Therefore, each
  $M_{i, j} = \Psi_i I_j + \Psi_j I_i - \Psi_{i, j}$ is a first level integral
  which belongs to $T (F_1 / K)$. Hence, we get a matrix type Ostrowski
  relation of the form $M = EI + G$, where $E$ is constant $3 \times 3$
  matrix, and $G \in K^3$. The previous relation implies that
  \[ Z \assign \left(\begin{array}{c}
       \Psi_{\beta, \gamma}\\
       \Psi_{\gamma, \alpha}\\
       \Psi_{\alpha, \beta}
     \end{array}\right) = YI - EI - G, \text{ \hspace{1em} \tmop{with}
     \hspace{1em} } Y \assign \left(\begin{array}{ccc}
       0 & \Psi_{\gamma} & \Psi_{\beta}\\
       \Psi_{\gamma} & 0 & \Psi_{\alpha}\\
       \Psi_{\beta} & \Psi_{\alpha} & 0
     \end{array}\right) \in M_3 (K) . \]
  As a consequence, we get the following formulae for the cocycle
  \begin{eqnarray*}
    \mathcal{C}_{\alpha, \beta, \gamma} (\sigma) & = & C (\sigma)^T Z +
    \frac{1}{2} C (\sigma)^T YC (\sigma) + l_{\alpha, \beta, \gamma}
    (\sigma)\\
    & = & C (\sigma)^T  (YI - EI - G) + \frac{1}{2} C (\sigma)^T YC (\sigma)
    + l_{\alpha, \beta, \gamma} (\sigma),\\
    \mathcal{C}_{\alpha, \beta, \gamma} (\sigma) & = & C (\sigma)^T  (Y - E) I
    + \frac{1}{2} C (\sigma)^T YC (\sigma) - C (\sigma) ^T G + l_{\alpha,
    \beta, \gamma} (\sigma) .
  \end{eqnarray*}
  Hence, $\mathcal{C}_{\alpha, \beta, \gamma}$ is a general coboundary of
  degree one. We get the formula of the first point of Lemma \ref{lem:cocycle}
  by setting
  \[ A \assign Y - E \text{\tmop{and}} \tilde{A} \assign Y / 2. \]
  Since $Y$ is symmetric, $\mathcal{C}_{\alpha, \beta, \gamma}$ is a
  coboundary iff $E$ is a symmetric matrix. This proves the claim.
  
  (3) For simplicity, we set $I \assign I_{\alpha}$ and $c \assign
  c_{\alpha}$, then by assumption we get $\forall \nu \in \{\alpha, \beta,
  \gamma\}$, $c_{\nu} = \theta_{\nu} c$, with $\theta_{\alpha} = 1$. Let's
  assume that $G$ is virtually Abelian. Here, the Ostrowski relations of point
  (1) can be written
  \[ \forall \nu \in \{\alpha, \beta, \gamma\}, \Psi_{\nu} = d_{\nu} I +
     f_{\nu} \text{\tmop{with}} d_{\nu} \in \mathbbm{C}, f_{\nu} \in K. \]
  As a consequence, the cocycles $\mathcal{C}_{i, j}$ are of degree one with
  constant $1 \times 1$ matrices. Indeed we get
  \begin{eqnarray*}
    c_i (\sigma) \Psi_j + c_j (\sigma) \Psi_i & = & c (\theta_i \Psi_j +
    \theta_j \Psi_i)\\
    \mathcal{C}_{i, j} (\sigma) = \sigma (\Psi_{i, j}) - \Psi_{i, j} & = & c
    (\theta_i d_j + \theta_j d_i) I + c (\theta_i f_j + \theta_j f_i) + l_{i,
    j} (\sigma) .
  \end{eqnarray*}
  Therefore, according to Lemma \ref{lem:cocycle}, those cocycles are
  coboundary and there exist some first level integrals $N_{i, j}$ such that
  \begin{equation}
    \Psi_{i, j} = \frac{1}{2}  (\theta_i d_j + \theta_j d_i) I^2 + (\theta_i
    f_j + \theta_j f_i) I + N_{i, j} . \label{eq:rank1}
  \end{equation}
  Now, if we denote by the symbol $\bigoplus$the sum over the three cyclic
  permutations of the indices $(\alpha, \beta, \gamma)$, we get
  \begin{eqnarray*}
    \mathcal{C}_{\alpha, \beta, \gamma} (\sigma) & = & \bigoplus c_h \Psi_{i,
    j} + \bigoplus c_i c_j \Psi_h + l_{\alpha, \beta, \gamma} (\sigma),\\
    \mathcal{C}_{\alpha, \beta, \gamma} (\sigma) & = & c \bigoplus \left(
    \frac{\theta_h}{2} (\theta_i d_j + \theta_j d_i) I^2 + \theta_h (\theta_i
    f_j + \theta_j f_i) I + \theta_h N_{i, j} \right)\\
    &  & + c^2 \bigoplus \theta_i \theta_j  (d_h I + f_h) + l_{\alpha, \beta,
    \gamma} (\sigma),\\
    \mathcal{C}_{\alpha, \beta, \gamma} (\sigma) & = & d [cI^2 + c^2 I] + 2
    cfI + cf + c^2 f + cN + l_{\alpha, \beta, \gamma} (\sigma),
  \end{eqnarray*}
  where we have set $d \assign \bigoplus \theta_i \theta_j d_h \in
  \mathbbm{C}$, $f \assign \bigoplus \theta_h \theta_i f_j \in K$ and $N
  \assign \bigoplus \theta_h N_{i, j}$ is a first level integral. Since
  $\mathcal{C}_{\alpha, \beta, \gamma} (\sigma) \in T (F_1 / K)$, for all
  $\sigma \in G^{\circ}$, $N$ also belongs to $T (F_1 / K)$. As a consequence,
  we get an Ostrowski relation of the form
  \[ N = N_{\beta, \gamma} + \theta_{\beta} N_{\gamma, \alpha} +
     \theta_{\gamma} N_{\alpha, \beta} = eI_{\alpha} + g = eI + g. \]
  This is precisely our additional necessary condition. Conversely, let's
  assume that (1) hold and $N = eI + g$. We already saw that all the $\Psi_{i,
  j}$ can be computed in closed form. We will conclude by showing that in fact
  \ $\mathcal{C}_{\alpha, \beta, \gamma}$ is a coboundary. By plugging $N = eI
  + g$ inside the last expression of $\mathcal{C}_{\alpha, \beta, \gamma}$, we
  see that the latter is of degree two in $I$. But we are going to decrease
  its degree thanks to the following trick
  \[ \Delta (I^3 / 3) = (I + c)^3 / 3 - I^3 / 3 = cI^2 + c^2 I + c^3 / 3. \]
  Therefore,
  \begin{eqnarray*}
    \mathcal{C}_{\alpha, \beta, \gamma} (\sigma) - \Delta (dI^3 / 3) & = & 2
    cfI + cf + c^2 f + cN + l_{\alpha, \beta, \gamma} (\sigma) - c^3 / 3\\
    & = & \Delta (fI^2) + ceI + cg + l_{\alpha, \beta, \gamma} (\sigma) - c^3
    / 3\\
    \mathcal{C}_{\alpha, \beta, \gamma} (\sigma) & = & \Delta [dI^3 / 3 + (f +
    e / 2) I^2 + gI] + l (\sigma) .
  \end{eqnarray*}
  So, $\mathcal{C}_{\alpha, \beta, \gamma}$ is a coboundary and $G$ is
  virtually Abelian.
  
  (4) Here we have $c_{\gamma} = bc_{\beta} + ac_{\alpha}$. If we set: $I^T
  \assign (I_{\alpha}, I_{\beta})$ and $C (\sigma)^T \assign (c_{\alpha},
  c_{\beta})$ then the Ostrowski relation of point (1) can be written
  \[ \Psi = DI + F = \left(\begin{array}{c}
       \Psi_{\alpha}\\
       \Psi_{\beta}\\
       \Psi_{\gamma}
     \end{array}\right) = \left(\begin{array}{cc}
       d_{\alpha, \alpha} & d_{\alpha, \beta}\\
       d_{\beta, \alpha} & d_{\beta, \beta}\\
       d_{\gamma, \alpha} & d_{\gamma, \beta}
     \end{array}\right)  \left(\begin{array}{c}
       I_{\alpha}\\
       I_{\beta}
     \end{array}\right) + \left(\begin{array}{c}
       f_{\alpha}\\
       f_{\beta}\\
       f_{\gamma}
     \end{array}\right) . \]
  Again, the cocycles $\mathcal{C}_{i, j}$ are going to be of degree one with
  constant matrices of size $2 \times 2$ this time. If we write
  $\mathcal{C}_{i, j} (\sigma) = C (\sigma)^T A_{i, j} I + C (\sigma)^T F_{i,
  j} + l_{i, j} (\sigma)$, the same computations as in point (3) give
  \begin{eqnarray*}
    A_{\beta, \gamma} = \left(\begin{array}{cc}
      ad_{\beta, \alpha} & ad_{\beta, \beta}\\
      d_{\gamma, \alpha} + bd_{\beta, \alpha} & d_{\gamma, \beta} + bd_{\beta,
      \beta}
    \end{array}\right), & F_{\beta, \gamma} = \left(\begin{array}{c}
      af_{\beta}\\
      f_{\gamma} + bf_{\beta}
    \end{array}\right), & \\
    A_{\alpha, \gamma} = \left(\begin{array}{cc}
      d_{\gamma, \alpha} + ad_{\alpha, \alpha} & d_{\gamma, \beta} +
      ad_{\alpha, \beta}\\
      bd_{\alpha, \alpha} & bd_{\alpha, \beta}
    \end{array}\right), & F_{\alpha, \gamma} = \left(\begin{array}{c}
      f_{\gamma} + af_{\alpha}\\
      bf_{\alpha}
    \end{array}\right), & \\
    A_{\alpha, \beta} = \left(\begin{array}{cc}
      d_{\beta, \alpha} & d_{\beta, \beta}\\
      d_{\alpha, \alpha} & d_{\alpha, \beta}
    \end{array}\right), & F_{\alpha, \beta} = \left(\begin{array}{c}
      f_{\beta}\\
      f_{\alpha}
    \end{array}\right) . & 
  \end{eqnarray*}
  The three cocycles $\mathcal{C}_{i, j}$ are coboundary iff the matrices
  $A_{i, j}$ are symmetric. This translates to having $D$ of the form
  \[ D = \left(\begin{array}{cc}
       d_{\alpha, \alpha} & d_{\alpha, \beta}\\
       d_{\beta, \alpha} & d_{\beta, \beta}\\
       d_{\gamma, \alpha} & d_{\gamma, \beta}
     \end{array}\right) = \left(\begin{array}{cc}
       d & x\\
       y & d\\
       ad - by & bd - ax
     \end{array}\right) \in M_{3, 2} (\mathbbm{C}) . \]
  If we look at the first line of $D$ this gives
  \[ \Psi_{\alpha} = \int \Phi I_{\alpha}' = dI_{\alpha} + xI_{\beta} +
     f_{\beta} \Leftrightarrow \int (\Phi - d) I_{\alpha}' = 0 I_{\alpha} +
     xI_{\beta} + f_{\alpha} . \]
  As a consequence, if we change $\Phi$ to $\Phi - d$ in the definition of the
  $\Psi_{\nu}$, the new corresponding matrix $D$ will be simplified into the
  form
  \[ D = \left(\begin{array}{cc}
       0 & x\\
       y & 0\\
       - by & - ax
     \end{array}\right) \Rightarrow \left(\begin{array}{c}
       \Psi_{\alpha}\\
       \Psi_{\beta}\\
       \Psi_{\gamma}
     \end{array}\right) = \left(\begin{array}{c}
       xI_{\beta}\\
       yI_{\alpha}\\
       - byI_{\alpha} - axI_{\beta}
     \end{array}\right) + \left(\begin{array}{c}
       f_{\alpha}\\
       f_{\beta}\\
       f_{\gamma}
     \end{array}\right), \]
  and this relation is equivalent to condition (4.a).
  
  When this condition is satisfied, the matrices $A_{i, j}$ are symmetric,
  hence by the second point of the lemma, we get explicit formulae of the form
  \begin{equation}
    \Psi_{i, j} = \frac{1}{2} I^T A_{i, j} I + F_{i, j}^T I + N_{i, j},
    \label{eq:rank2}
  \end{equation}
  where the $N_{i, j}$ are first level integral. Precisely, by expressing the
  $A_{i, j}$ in terms of $x, y, \theta_{\alpha}, \theta_{\beta}$, we get the
  following expressions
  \begin{eqnarray*}
    A_{\beta, \gamma} = \left(\begin{array}{cc}
      ay & 0\\
      0 & - ax
    \end{array}\right) & \Longrightarrow & \Psi_{\beta, \gamma} = \frac{ay}{2}
    I_{\alpha}^2 - \frac{ax}{2} I_{\beta}^2 + (af_{\beta}) I_{\alpha} +
    (f_{\gamma} + bf_{\beta}) I_{\beta} + N_{\beta, \gamma},\\
    A_{\alpha, \gamma} = \left(\begin{array}{cc}
      - by & 0\\
      0 & bx
    \end{array}\right) & \Longrightarrow & \Psi_{\alpha, \gamma} = -
    \frac{by}{2} I_{\alpha}^2 + \frac{bx}{2} I_{\beta}^2 + (f_{\gamma} +
    af_{\alpha}) I_{\alpha} + (bf_{\alpha}) I_{\beta} + N_{\alpha, \gamma},\\
    A_{\alpha, \beta} = \left(\begin{array}{cc}
      y & 0\\
      0 & x
    \end{array}\right) & \Longrightarrow & \Psi_{\alpha, \beta} = \frac{y}{2}
    I_{\alpha}^2 + \frac{x}{2} I_{\beta}^2 + (f_{\beta}) I_{\alpha} +
    (f_{\alpha}) I_{\beta} + N_{\alpha, \beta} .
  \end{eqnarray*}
  Next, by plugging these expressions in closed form of $\Psi_{i, j}$ and
  $\Psi_{\nu}$ into $\mathcal{C}_{\alpha, \beta, \gamma}$ we get a formula of
  degree two in $I$ where $A$ is the symmetric matrix given by $A \assign
  \left(\begin{array}{cc}
    af_{\beta} & f / 2\\
    f / 2 & bf_{\alpha}
  \end{array}\right)$ with $f \assign f_{\gamma} + bf_{\beta} + af_{\alpha}$:
  \begin{eqnarray*}
    \mathcal{C}_{\alpha, \beta, \gamma} (\sigma) & = & ay [c_{\alpha}
    I_{\alpha}^2 + c_{\alpha}^2 I_{\alpha}] + bx [c_{\beta} I_{\beta}^2 +
    c_{\beta}^2 I_{\beta}] + l_{\alpha, \beta, \gamma} (\sigma)\\
    &  & + C (\sigma)^T 2 AI + C (\sigma)^T AC (\sigma)\\
    &  & + C (\sigma)^T \left(\begin{array}{c}
      N_{\beta, \gamma} + aN_{\alpha, \beta}\\
      N_{\alpha, \gamma} + bN_{\alpha, \beta}
    \end{array}\right) .
  \end{eqnarray*}
  In the first line above we recognise an expression of the form $\Delta (
  \frac{ay}{3} I_{\alpha}^3 + \frac{bx}{3} I_{\beta}^3) + l (\sigma)$.
  Moreover, since \ $\mathcal{C}_{\alpha, \beta, \gamma} (\sigma) \in T (F_1 /
  K) \forall \sigma \in G^{\circ}$, the first level integrals $N_{\beta,
  \gamma} + aN_{\alpha, \beta}$ and $N_{\alpha, \gamma} + bN_{\alpha, \beta}$
  are in $T (F_1 / K)$. So we get an Ostrowski relation of the form
  \[ \left(\begin{array}{c}
       N_{\beta, \gamma} + aN_{\alpha, \beta}\\
       N_{\alpha, \gamma} + bN_{\alpha, \beta}
     \end{array}\right) = EI + G \text{\tmop{with}} E \in M_2 (\mathbbm{C}), G
     \in K^2 . \]
  Hence,
  \[ \mathcal{C}_{\alpha, \beta, \gamma} (\sigma) = \Delta ( \frac{ay}{3}
     I_{\alpha}^3 + \frac{bx}{3} I_{\beta}^3) + C (\sigma)^T  (2 A + E) I + C
     (\sigma)^T AC (\sigma) + C (\sigma)^T G + l (\sigma), \]
  is a coboundary, iff $E$ is a constant $2 \times 2$ symmetric matrix. This
  proves the claim.
\end{proof}

\subsubsection{The criteria for the virtual Abelianity of $\tmop{VE}_{2,
\alpha}^{\gamma}$ when $\Phi$ is algebraic}

Here, we also use the notations of Section \ref{sec:hierarchy}. Again, the
first three points of the theorem below, give necessary conditions for the
virtual Abelianity of $\tmop{VE}_{2, \alpha}^{\gamma}$. Points 5 and 6, give
sufficient conditions according to the dependence of the two integrals
$I_{\alpha}$ and $I_{\gamma}$.

\begin{theorem}
  \label{prop:ve2test2}With the notation of Section \ref{sec:hierarchy}, let
  us assume that $\Phi = \int \omega y_1 x^2_1$ is algebraic (i.e. $\Phi \in
  K$). If $\tmop{VE}_{2, \alpha}^{\gamma}$ is virtually Abelian, then we get
  the following
  \begin{enumeratenumeric}
    \item There exists an Ostrowski relation between $\Psi_{\alpha}$ and
    $I_{\alpha}$ : $\Psi_{\alpha} - d_{\alpha} I_{\alpha} \in K$.
    
    \item There exists an Ostrowski relation between $\Psi_{\gamma}$,
    $I_{\gamma}$ and $I_{\alpha}$ : $\Psi_{\gamma} - d_{\gamma} I_{\gamma} -
    dI_{\alpha} \in K$.
    
    \item If in the previous relations $d_{\gamma} \neq d_{\alpha}$ then there
    exists an Ostrowski relation between $I_{\gamma}$ and $I_{\alpha}$.
    
    \item Conversely, if $\Phi \in K$ and conditions (1),(2) and (3) hold
    true, then independently of the virtual Abelianity of $\tmop{VE}_{2,
    \alpha}^{\gamma}$, the second level integrals $\Psi_{\alpha},
    \Psi_{\gamma}, \Psi_{2 \alpha}, \Psi_{\gamma, \alpha}$, $X$ and $M$ can be
    computed in closed form. Moreover, $\tmop{VE}_{2, \alpha}^{\gamma}$ is
    virtually Abelian iff $\Psi_{\gamma, 2 \alpha}$ can also be computed in
    closed form.
    
    \item If $\Phi \in K$ and conditions (1),(2) and (3) hold true with
    $I_{\alpha}$ and $I_{\gamma}$ independent. According to points (1) and
    (3), there is unique choice of $\Phi$ modulo constants such that
    $\Psi_{\alpha} \in K$ and $\Psi_{\gamma} = dI_{\alpha} + f$. Then
    $\tmop{VE}_{2, \alpha}^{\gamma}$ is virtually Abelian iff the two
    following conditions are satisfied:
    \begin{itemizedot}
      \item The integrals $X$ and $M$ have polynomial expressions of the form
      \[ \left(\begin{array}{c}
           M\\
           X
         \end{array}\right) = \left(\begin{array}{c}
           dI_{\alpha}^2 / 2\\
           0
         \end{array}\right) + E \left(\begin{array}{c}
           I_{\alpha}\\
           I_{\gamma}
         \end{array}\right) + G \text{\tmop{with}} E \in M_2 (\mathbbm{C}), G
         \in K^2, \]
      where \ $M$ is defined in equation (\ref{eq:M}) below.
      
      \item Moreover, $E$ is symmetric, that is we have $a_X = b_M$ in
      equations (\ref{eq:aX}) and (\ref{eq:bM}) below.
    \end{itemizedot}
    \item Assume that $\Phi \in K$ and conditions (1),(2) hold true with
    $I_{\alpha}$ and $I_{\gamma}$ are dependant, that is, we have an Ostrowski
    relation $I_{\gamma} - \theta I_{\alpha} \in K$. Let's we write (2) into
    the form $\Psi_{\gamma} - eI_{\alpha} \in K$. Then, $\tmop{VE}_{2,
    \alpha}^{\gamma}$ is virtually Abelian iff $M + \theta X$ can be computed
    into a polynomial form
    \[ M + \theta X = \frac{e}{2} I^2 + aI + g \text{\tmop{with}} a \in
       \mathbbm{C}, g \in K. \]
  \end{enumeratenumeric}
\end{theorem}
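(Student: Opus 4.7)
The plan is to follow the strategy already exhibited in the proof of Theorem \ref{prop:EXtest2}, adapted to the asymmetric situation in which the indices $\alpha$ and $\gamma$ play distinct roles inside $\tmop{VE}_{2,\alpha}^{\gamma}$. The two master tools are Theorem \ref{thliouville}---virtual Abelianity of $F_2/K$ is equivalent to each second level cocycle $\sigma \mapsto \sigma(\Psi_m) - \Psi_m$ lying in $T(F_1/K) = K[I_\alpha, I_\gamma]$ and being a coboundary---and Lemma \ref{lem:cocycle}, which for a degree-one cocycle with constant matrix $A$ reduces being a coboundary to the symmetry of $A$.

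For the necessary conditions (1)--(3), one would start from the cocycles
\[
\mathcal{C}_{2\alpha}(\sigma) = 2 c_\alpha \Psi_\alpha + l_{2\alpha}(\sigma), \qquad \mathcal{C}_{\gamma,\alpha}(\sigma) = c_\alpha \Psi_\gamma + c_\gamma \Psi_\alpha + l_{\gamma,\alpha}(\sigma)
\]
computed above. Requiring them to lie in $T(F_1/K)$ forces $\Psi_\alpha, \Psi_\gamma \in K[I_\alpha, I_\gamma]$; since each $\Psi_\nu$ is a primitive of an element of $K$, differentiating its polynomial expansion kills every monomial of total degree $\geqslant 2$, so $\Psi_\nu = a_{\nu,\alpha} I_\alpha + a_{\nu,\gamma} I_\gamma + f_\nu$ with $a_{\nu,\mu} \in \mathbbm{C}$ and $f_\nu \in K$. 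Applying Lemma \ref{lem:cocycle} to $\mathcal{C}_{2\alpha}$, whose constant matrix has first row $(a_{\alpha,\alpha}, a_{\alpha,\gamma})$ and zero second row, the symmetry requirement forces $a_{\alpha,\gamma} = 0$, which is (1) with $d_\alpha \assign a_{\alpha,\alpha}$. The analogous analysis of $\mathcal{C}_{\gamma,\alpha}$ gives the general linear expansion (2); when $I_\alpha, I_\gamma$ satisfy no Ostrowski relation over $K$, the decomposition of $\Psi_\gamma$ is unique and symmetry of the corresponding matrix forces $a_{\gamma,\gamma} = d_\alpha$. Thus if $d_\gamma \neq d_\alpha$, uniqueness must fail, which is precisely (3).

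For the converse (4), assume $\Phi \in K$ and (1)--(3). Then $\Psi_\alpha, \Psi_\gamma$ are explicit, and integration by parts gives
\[
\Psi_{2\alpha} = \Psi_\alpha I_\alpha - X, \qquad \Psi_{\gamma,\alpha} = \Psi_\alpha I_\gamma + \Psi_\gamma I_\alpha - M,
\]
so closed-form evaluation of the two-index $\Psi$'s reduces to that of the first level integrals $X$ and $M$. After this reduction the only remaining second level cocycle to control is $\mathcal{C}_{\gamma,2\alpha}$, whence virtual Abelianity of $\tmop{VE}_{2,\alpha}^{\gamma}$ reduces to $\Psi_{\gamma,2\alpha}$ being expressible in closed form.

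Finally, for points (5) and (6), one would plug the closed-form expressions of $\Psi_\nu, \Psi_{2\alpha}, \Psi_{\gamma,\alpha}$ into
\[
\mathcal{C}_{\gamma,2\alpha}(\sigma) = 2 c_\alpha \Psi_{\alpha,\gamma} + 2 c_\gamma \Psi_{2\alpha} + c_\alpha^2 \Psi_\gamma + 2 c_\alpha c_\gamma \Psi_\alpha + l_{\gamma,2\alpha}(\sigma),
\]
and use the telescoping identity $\Delta(I^3/3) = cI^2 + c^2 I + c^3/3$ (as in the proof of Theorem \ref{prop:EXtest2}(3)) to reduce the cubic part of the cocycle to degree at most one in the $I_\mu$'s. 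In the independent case (5), Lemma \ref{lem:cocycle} then reduces the coboundary condition to an expansion of $(M,X)^T$ as $(dI_\alpha^2/2, 0)^T + E (I_\alpha, I_\gamma)^T + G$ together with the symmetry of the constant $2\times 2$ matrix $E$, i.e.\ $a_X = b_M$. In the dependent case (6), the Ostrowski relation $I_\gamma - \theta I_\alpha \in K$ collapses everything to a single Ostrowski condition on $M + \theta X$. The main obstacle here is bookkeeping rather than conceptual: one must carefully track the normalisation of $\Phi$ modulo constants (cf.\ Remark \ref{rem:psi}), the constant-valued functions $l_m(\sigma)$, and the cross-terms arising from integrations by parts, so as not to hide or create spurious symmetries in $E$.
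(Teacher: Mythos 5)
Your proposal follows the paper's architecture almost exactly: Theorem \ref{thliouville} to force the cocycles into $T(F_1/K)$, Lemma \ref{lem:cocycle} to convert the coboundary condition into symmetry of constant matrices, the telescoping identity $\Delta(I^3/3)=cI^2+c^2I+c^3/3$ to absorb the degree-two part of $\mathcal{C}_{\gamma,2\alpha}$, and the independent/dependent dichotomy for points (5) and (6). The one place you genuinely diverge is in points (1) and (3): the paper obtains (1) by applying Lemma \ref{lem:fund} to the tower $K\subset K(I_\alpha)\subset K(I_\alpha)(\Psi_{2\alpha})$, and obtains (3) by normalising $d_\alpha=0$, expanding $M$ and applying Lemma \ref{lem:fund} to the residual second level integral $d_\gamma\int I_\gamma' I_\alpha+J_1$; you instead read both off the symmetry of the $2\times 2$ constant matrices of $\mathcal{C}_{2\alpha}$ and $\mathcal{C}_{\gamma,\alpha}$ (forcing $a_{\alpha,\gamma}=0$ and $d_\gamma=d_\alpha$ in the independent case). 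This is legitimate and arguably tidier --- the paper itself notes that the computation in Lemma \ref{lem:fund} is a particular case of Lemma \ref{lem:cocycle}(2)--(3) --- but keep in mind that Lemma \ref{lem:cocycle} is stated only for \emph{independent} $I_1,\dots,I_n$, so when $I_\alpha$ and $I_\gamma$ are dependent you must rerun the argument with $n=1$ (where every matrix is symmetric and no constraint arises, consistently with (3) being vacuous there).

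The one substantive omission is in point (4). You assert that closed-form evaluation of $\Psi_{2\alpha}$ and $\Psi_{\gamma,\alpha}$ ``reduces to that of the first level integrals $X$ and $M$'', but $M=\int\Psi_\gamma I_\alpha'+\Psi_\alpha I_\gamma'$ is \emph{not} first level: after normalising $\Psi_\alpha\in K$, its derivative still contains $d_\gamma I_\gamma I_\alpha'+dI_\alpha I_\alpha'$, so $M$ is a genuine second level integral. Its computability in closed form is exactly where hypothesis (3) does work: either $d_\gamma=0$, in which case $M=\tfrac{d}{2}I_\alpha^2+J_1$ with $J_1$ first level, or $I_\gamma=\theta I_\alpha+\kappa$, in which case $\int I_\gamma I_\alpha'=\tfrac{\theta}{2}I_\alpha^2+\int\kappa I_\alpha'$ is again a polynomial in $I_\alpha$ plus a first level integral. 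Without this case analysis the claim of point (4) --- and hence the reduction of virtual Abelianity to the single integral $\Psi_{\gamma,2\alpha}$ --- is not justified. The rest of your sketch, including the final symmetry condition $a_X=b_M$ in (5) and the collapse to the single Ostrowski relation on $M+\theta X$ in (6), matches the paper's computation.
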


\begin{proof}
  (1) Again, let us assume that $F / K = \tmop{PV} (\tmop{VE}_{2,
  \alpha}^{\gamma}) / K$ is virtually Abelian. $F$ contains the first level
  integrals $\Psi_{\nu} = \int \Phi I'_{\nu}$ for $\nu \in \{\gamma ;
  \alpha\}$. It also contains the second level integral $\Psi_{2 \alpha} =
  \int \Psi_{\alpha}' I_{\alpha}$ and we get an Ostrowski relation between
  $\Psi_{\alpha}$ and $I_{\alpha}$ thanks to Lemma \ref{lem:fund}.
  
  (2) Since the second level integral $\Psi_{\alpha, \gamma} \assign \int
  \Psi'_{\gamma} I_{\alpha} + \Psi'_{\alpha} I_{\gamma}$ also belongs to $F$,
  and since $F / K$ is virtually Abelian, from Theorem \ref{thliouville}, the
  cocycles,
  \[ \mathcal{C}_{\alpha, \gamma} (\sigma) = \sigma (\Psi_{\alpha, \gamma}) -
     \Psi_{\alpha, \gamma} = c_{\alpha} (\sigma) \Psi_{\gamma} + c_{\gamma}
     (\sigma) \Psi_{\alpha} + l (\sigma) \in T (F_1 / K) = K [I_{\gamma} ;
     I_{\alpha}], \]
  From point (1), we already know that $\Psi_{\alpha} \in T (F_1 / K)$.
  Therefore, $\Psi_{\gamma} \in T (F_1 / K)$ and we get an Ostrowski relation
  between the three integrals $\Psi_{\gamma}$, $I_{\gamma}$ and $I_{\alpha}$.
  
  (3) According to point (1) and Remark \ref{rem:psi}, we can choose a fixed
  representative of $\Phi$ such that $\Psi_{\alpha} \in K$. In other words we
  can assume that $d_{\alpha} = 0$ in (1). Let's compute $\Psi_{\gamma,
  \alpha}$ thanks to the two previous Ostrowski relations modulo integral of
  first level and elements of $T (F_1 / K)$. We get
  \begin{equation}
    \Psi_{\alpha, \gamma} \assign \Psi_{\gamma} I_{\alpha} + \Psi_{\alpha}
    I_{\gamma} - M \text{\tmop{with}} M \assign M_{\alpha, \gamma} = \int
    \Psi_{\gamma} I'_{\alpha} + \Psi_{\alpha} I'_{\gamma} . \label{eq:M}
    (\tmop{eq} : M)
  \end{equation}
  Since $\Psi_{\alpha} \in K$ and $\Psi_{\gamma}$ can be written
  $\Psi_{\gamma} = d_{\gamma} I_{\gamma} + dI_{\alpha} + f$ with $f \in K$,
  the expression $\Psi_{\gamma} I_{\alpha} + \Psi_{\alpha} I_{\gamma} \in T
  (F_1 / K)$ and $M$ is a second level integral belonging to $F$. Moreover,
  \begin{eqnarray*}
    M & = & \int d_{\gamma} I_{\gamma} I'_{\alpha} + dI_{\alpha} I'_{\alpha} +
    (fI_{\alpha}' + \Psi_{\alpha} I_{\gamma}')\\
    M & = & d_{\gamma}  \int I_{\gamma}' I_{\alpha} + \frac{d}{2} I_{\alpha}^2
    + \int fI_{\alpha}' + \Psi_{\alpha} I_{\gamma}' \in F.
  \end{eqnarray*}
  If we set $J_1 \assign \int fI_{\alpha}' + \Psi_{\alpha} I_{\gamma}'$; this
  a first level integral over $K$ and the last relation tells us that \
  \[ d_{\gamma} \int I_{\gamma}' I_{\alpha} + J_1 \in F. \]
  But $F / K$ virtually Abelian implies that $F (J_1) / K$ is also virtually
  Abelian. Hence applying Lemma \ref{lem:fund} again, we get an Ostrowski
  relation between $I_{\gamma}$ and $I_{\alpha}$ if $d_{\gamma} \neq 0$.
\end{proof}

Before proving point (4), let us assume that $\Phi$ is algebraic and
conditions (1),(2) and (3) of the theorem hold true. We have seen that these
conditions can be restated into the following simpler form :
\begin{equation}
  \left\{ \begin{array}{l}
    \Phi \in K, \Psi_{\alpha} \in K\\
    \Psi_{\gamma} = d_{\gamma} I_{\gamma} + dI_{\alpha} + f,\\
    d_{\gamma} \neq 0 \hspace{1em} \Rightarrow \hspace{1em} \exists (\theta,
    \kappa) \in \mathbbm{C}^{\ast} \times K|I_{\gamma} = \theta I_{\alpha} +
    \kappa
  \end{array} \right.
\end{equation}
\label{eq:neccond}Let us set $X \assign \int \Psi_{\alpha} I'_{\alpha}$ and $M
\assign \int \Psi_{\gamma} I'_{\alpha} + \Psi_{\alpha} I'_{\gamma}$ as in
(\ref{eq:M}). $X$ is a first level integral. Here we are going to show in
addition that the second level integrals $M$ and $\Psi_{\gamma, \alpha}$ can
also be computed in closed form. Precisely we shall prove that any such
integral coincides with a polynomial in $I_{\alpha}, I_{\gamma}$ with
coefficients in $K$ plus a first level integral.

\begin{proof}
  of point (4). Since $\Psi_{2 \alpha} = \Psi_{\alpha} I_{\alpha} - X$, and
  $X$ is a first level integral and $\Psi_{2 \alpha}$ can be computed in
  closed form. Now, from point (3) \ we get two possibilities. If $d_{\gamma}
  = 0$, then $M = \frac{d}{2} I_{\alpha}^2 + J_1$. If $d_{\gamma} \neq 0$,
  then
  \[ M = \frac{d_{\gamma} \theta}{2} I_{\alpha}^2 + d_{\gamma}  \int \kappa
     I_{\alpha}' + \frac{d}{2} I_{\alpha}^2 + J_1 = \frac{d_{\gamma} \theta +
     d}{2} I_{\alpha}^2 + J_2 \text{\tmop{with}} J_2 \assign J_1 + d_{\gamma} 
     \int \kappa I_{\alpha}' . \]
  Hence, $M$ can be computed in closed form since $J_2$ is a first level
  integral. For $\Psi_{\gamma, \alpha}$ this follows from (\ref{eq:M}). As a
  consequence, according to Theorem \ref{thliouville}, and Section
  \ref{sec:hierarchy}, $\tmop{VE}_{2, \alpha}^{\gamma}$ is virtually Abelian
  iff $\Phi_{\gamma, 2 \alpha}$ can be computed in closed form.
\end{proof}

\paragraph{Computation of the cocycle $\mathcal{C}_{\gamma, 2 \alpha} (\sigma)
= \sigma (\Psi_{\gamma, 2 \alpha}) - \Psi_{\gamma, 2 \alpha}$ when
(\ref{eq:neccond}) hold true}

By substituting the previous integrals by their expression in closed form into
the formula $\mathcal{C}_{\gamma, 2 \alpha} (\sigma) = 2 c_{\alpha}
\Psi_{\alpha, \gamma} + 2 c_{\gamma} \Psi_{2 \alpha} + c_{\alpha}^2
\Psi_{\gamma} + 2 c_{\alpha} c_{\gamma} \Psi_{\alpha} + l (\sigma)$, we get
\begin{eqnarray}
  \mathcal{C}_{\gamma, 2 \alpha} (\sigma) & = & 2 c_{\alpha}  [\Psi_{\gamma}
  I_{\alpha} + \Psi_{\alpha} I_{\gamma} - M]  \label{eq:cocycle1}\\
  &  & + 2 c_{\gamma}  [\Psi_{\alpha} I_{\alpha} - X] \nonumber\\
  &  & + c_{\alpha}^2  [\Psi_{\gamma}] + 2 c_{\alpha} c_{\gamma} 
  [\Psi_{\alpha}] + l (\sigma) \nonumber
\end{eqnarray}
According to Theorem \ref{thliouville}, $G$ is virtually Abelian implies that
\[ \forall \sigma \in G^{\circ}, \mathcal{C} (\sigma) \in T (F_1 / K) = K
   [I_{\alpha} ; I_{\gamma}] . \]
Therefore, if we compute $\mathcal{C}_{\gamma, 2 \alpha} (\sigma)$ modulo $T
(F_1 / K) = K [I_{\alpha} ; I_{\gamma}]$, we find a new necessary condition
for Abelianity:
\begin{equation}
  G \text{v. \tmop{Ab}} \Rightarrow \forall \sigma \in G^{\circ}, c_{\gamma} X
  + c_{\alpha} M \in K [I_{\alpha} ; I_{\gamma}] . \label{eq:neccond2}
\end{equation}
Two cases therefore happen

{\tmstrong{First Case}} : If $I_{\gamma}$ and $I_{\alpha}$ are independent
over $K$, then (\ref{eq:neccond2}) is equivalent to having both $X$ and $M$ in
$K [I_{\alpha} ; I_{\gamma}]$. Since $X$ is a first level integral this
translates to having an Ostrowski relation of the form
\begin{equation}
  X = a_X I_{\alpha} + b_X I_{\gamma} + g_X \text{\tmop{with}} a_X, b_X \in
  \mathbbm{C}, g_X \in K. \label{eq:aX}
\end{equation}
Since the independence of $I_{\gamma}$ and $I_{\alpha}$ imposes to having
$d_{\gamma} = 0$ that is $\Psi_{\gamma} = dI_{\alpha} + f$, and $M =
\frac{d}{2} I_{\alpha}^2 + J_1$. Therefore, $M \in K [I_{\gamma} ;
I_{\alpha}]$ iff we get an Ostrowski relation of the form
\begin{equation}
  J_1 = a_M I_{\alpha} + b_M I_{\gamma} + g_M \Leftrightarrow M = \frac{d}{2}
  I_{\alpha}^2 + a_M I_{\alpha} + b_M I_{\gamma} + g_M . \label{eq:bM}
\end{equation}
Now, if we substitute the actual expressions in closed polynomial form of
$\Psi_{\gamma}, X$ and $M$ into (\ref{eq:cocycle1}), and expand the result as
a polynomial in $\{I_{\alpha} ; I_{\gamma} \}$ and $\{c_{\alpha} ; c_{\gamma}
\}$ with coefficients in $K$, this gives the following formula for the
cocycle:
\begin{eqnarray*}
  \mathcal{C}_{\gamma, 2 \alpha} (\sigma) & = & d [c_{\alpha} I_{\alpha}^2 +
  c_{\alpha}^2 I_{\alpha}] + l_{\gamma, 2 \alpha} (\sigma)\\
  &  & + [2 c_{\alpha} (f - a_M) + 2 c_{\gamma} (\Psi_{\alpha} - a_X)]
  I_{\alpha}\\
  &  & + [2 c_{\alpha} (\Psi_{\alpha} - b_M) + 2 c_{\gamma} ( - b_X)]
  I_{\gamma} \\
  &  & + c^2_{\alpha} f + 2 c_{\alpha} c_{\gamma} \Psi_{\alpha} - 2
  c_{\alpha} g_M - 2 c_{\gamma} g_X .
\end{eqnarray*}
Again, as in point (4) of Theorem \ref{prop:EXtest2}, this cocycle is of
degree two, and by setting
\[ A \assign \left(\begin{array}{cc}
     2 (f - a_M) & 2 (\Psi_{\alpha} - b_M)\\
     2 (\Psi_{\alpha} - a_X) & - 2 b_X
   \end{array}\right), \tilde{A} \assign \left(\begin{array}{cc}
     f & \Psi_{\alpha}\\
     \Psi_{\alpha} & 0
   \end{array}\right), G \assign \left(\begin{array}{c}
     g_M\\
     g_X
   \end{array}\right), I \assign \left(\begin{array}{c}
     I_{\alpha}\\
     I_{\gamma}
   \end{array}\right), \]
we get a formula
\[ \mathcal{C}_{\gamma, 2 \alpha} (\sigma) = \Delta ( \frac{d}{3}
   I_{\alpha}^3) + C (\sigma)^T AI + C (\sigma)^T  \tilde{A} C (\sigma) - 2 C
   (\sigma)^T G + l (\sigma) . \]
But $A - 2 \tilde{A}$ is a constant matrix, and $A$ is symmetric iff $a_X =
b_M$. So, \ $\mathcal{C}_{\gamma, 2 \alpha}$ is a coboundary iff this latter
condition is satisfied. This proves the criteria given in point (5).

{\tmstrong{Second case}} : Here, we assume that $I_{\gamma}$ and $I_{\alpha}$
are dependant that is $I_{\gamma} = \theta I_{\alpha} + \kappa$ and
$c_{\gamma} = \theta c_{\alpha}$. For simplicity we will write $I_{\alpha} =
I$ and $c_{\alpha} = c \Rightarrow c_{\gamma} = \theta c$. The Ostrowski
relation for $\Psi_{\gamma}$ will be written $\Psi_{\gamma} = eI + f$ with $e
\assign \theta d_{\gamma} + d$. As a consequence the formula for $M$ is now $M
= \frac{e}{2} I^2 + J_2$ (independently of the possible vanishing of
$d_{\gamma}$ the important number with that respect is now $e$). Now
(\ref{eq:neccond2}) is equivalent to having
\[ M + \theta X = \frac{e}{2} I^2 + J_2 + \theta X \in K [I_{\alpha} ;
   I_{\gamma}] = K [I] \Leftrightarrow J_2 + \theta X \in K [I] . \]
Since, $J_2 + \theta X$ is a first level integral this equivalent to having an
expansion in closed polynomial form :
\[ J_2 + \theta X = aI + g \Leftrightarrow M + \theta X = \frac{e}{2} I^2 + aI
   + g \text{\tmop{with}} a \in \mathbbm{C}, g \in K. \]
Again, if we substitute the actual expressions of $\Psi_{\gamma}$ and $M +
\theta X$ in closed polynomial form in (\ref{eq:cocycle1}) with $I_{\gamma} =
\theta I + \kappa$ and $c_{\gamma} = \theta c$ and expand the result as a
polynomial in $I$ and $c$, this gives the following formula for the cocycle
\begin{eqnarray*}
  \mathcal{C}_{\gamma, 2 \alpha} (\sigma) & = & e [cI^2 + c^2 I] + l_{\gamma,
  2 \alpha} (\sigma)\\
  &  & + 2 c (f + 2 \theta \Psi_{\alpha} - a) I + c^2  (f + 2 \theta
  \Psi_{\alpha}) + 2 c (\kappa \Psi_{\alpha} - g) .
\end{eqnarray*}
There is no condition on the size, and we recognise a coboundary
\[ \mathcal{C}_{\gamma, 2 \alpha} (\sigma) = \Delta [ \frac{e}{3} I^3 + (f + 2
   \theta \Psi_{\alpha} - a) I^2 + 2 (\kappa \Psi_{\alpha} - g) I] + l
   (\sigma) . \]
and we do not get more constrain in this case, and point (6) follows.

\section{Effective test for $\tmop{VE}_{2, \alpha}^{\gamma}$}

Here according to Section \ref{sec:hierarchy}, $K =\mathbbm{C} (z) [\omega,
y_1 ; x_1]$. We are therefore led to apply Proposition \ref{prop:test1} with
\[ \Phi = \int \omega y_1 x^2_1 . \]
\subsection{Testing the algebraicity of $\Phi$ when $|k| \geqslant 3$
}\label{sec:testphi}

According to Lemma \ref{lem:jordan}, when $|k| \geqslant 3$ and
$G_{\gamma}^{\circ} = G_{\alpha}^{\circ} = G_a$, we can write $y_1$ and $x_1$
into the form
\[ y_1 = z^{a_{\gamma}}  (z - 1)^{b_{\gamma}} J_{\gamma} (z) \text{\tmop{and}}
   x_1 = z^{a_{\alpha}}  (z - 1)^{b_{\alpha}} J_{\alpha} (z) . \]
Since $\omega = z^{- ( \frac{3}{2} + \frac{1}{2 k})}  (z - 1)^{\frac{-
5}{4}}$, and $\Phi' = \omega y_1 x^2_1$ we get,
\[ \Phi' = z^{E_0}  (z - 1)^{E_1} J_{\gamma} J^2_{\alpha}, \]
with $E_0 = a_{\gamma} + 2 a_{\alpha} - 3 / 2 - 1 / 2 k$, and $E_1 =
b_{\gamma} + 2 b_{\alpha} - 5 / 4$. These values can be explicitly computed
thanks to Table \ref{tabjacobi}, and lead us to the following new table giving
the explicit expression of $z^{E_0}  (z - 1)^{E_1}$ in the expression $\Phi' =
z^{E_0}  (z - 1)^{E_1} J_{\gamma} J^2_{\alpha}$.

\begin{table}[h]
  \begin{tabular}{|c|c|c|c|c|}
    \hline
    $\gamma \backslash \alpha$ & 1 & 2 & 3 & 4\\
    \hline
    1 & $z^{1 / k}  (z - 1)^{- 1 / 2} $ & $z^{1 / k}  (z - 1)^{1 / 2}$ & $z^{-
    1 / k}  (z - 1)^{- 1 / 2}$ & $z^{- 1 / k}  (z - 1)^{1 / 2}$\\
    \hline
    2 & $z^{1 / k}$ alg & $z^{1 / k}  (z - 1)$ alg & $z^{- 1 / k}$ alg & $z^{-
    1 / k}  (z - 1)$ alg\\
    \hline
    3 & $(z - 1)^{- 1 / 2}$ alg & $(z - 1)^{1 / 2}$ alg & $z^{- 2 / k}  (z -
    1)^{- 1 / 2}$ & $z^{- 2 / k}  (z - 1)^{1 / 2}$\\
    \hline
    4 & 1 alg & $(z - 1)$ alg & $z^{- 2 / k}$ alg & $z^{- 2 / k}  (z - 1)$
    alg\\
    \hline
  \end{tabular}
  \caption{\label{tab:phialg}}
\end{table}

In this table the notation {\guillemotleft} alg {\guillemotright}, means that
necessarily, $\Phi$ is algebraic, and we count 10 cases over the 16
possibilities where this happens! In fact, this happens when at least one of
the exponents $E_0$ or $E_1$ belong to $\mathbbm{Z}$. This can be seen by
direct computation, either by a consequence of Remark \ref{rem:fuchs}.

Now,for each of the 6 entries of the previous table, where $\Phi$ can be
transcendental, $E_0 \in \{\pm 1 / k ; - 2 / k\}$ and $E_1 \in \{\pm 1 / 2\}$,
and we may write
\[ \Phi' = \Omega P \text{\tmop{with}} \Omega \assign z^{E_0}  (1 - z)^{E_1}
   \text{\tmop{and}} P \assign J_{\gamma} J_{\alpha}^2 . \]
As a consequence, $\Phi$ is an integral of the type described in the previous
section, so according to Proposition \ref{prop:testphi}, the algebraicity of
$\Phi$ reduces to the vanishing of $\mu (P) = \mu (J_{\gamma} J_{\alpha}^2)$.
There are two ways of testing the vanishing of this number. One with the
coefficients of $P$ if they are explicitly known. And the other with the
evaluation of the definite integral between 0 and 1.

About this second method it applies sometime efficiently in our context.
Indeed, for any Jacobi polynomial $J (z) = J^{(\alpha, \beta)}$, we can
associate a kernel $\Omega = \Omega_J = z^{\beta}  (1 - z)^{\alpha}$, whose
explicit formula is given thanks to Table \ref{tabjacobi}. Moreover we know
that $J$ is orthogonal for the scalar product $< P ; Q > = \int_0^1 PQ
\Omega_J$, to any polynomial whose degree is smaller than $\deg (J)$.

For examples : thanks to Tables \ref{tabjacobi} and \ref{tab:phialg}, we have

In case $(\gamma, \alpha) = (1 ; 1)$ then, $\Omega = \Omega_{\gamma} =
\Omega_{\alpha}$ \ therefore,
\[ \Phi = \int J_{\gamma} J_{\alpha}^2 \Omega_{\gamma} \Rightarrow \mu = \mu
   (J_{\gamma} J_{\alpha}^2) = < J_{\gamma} ; J_{\alpha}^2 >_{\gamma} . \]
Hence, $2 \deg (J_{\alpha}) < \deg (J_{\gamma}) \Rightarrow \mu (P) = 0$ and
the corresponding $\Phi$ is algebraic. This of course give new cases when
$\Phi$ is algebraic. Unfortunately, we have got no converse and $\Phi$ could
be algebraic outside of these cases.

In case $(\gamma, \alpha) = (1 ; 2)$ then $\Omega = \Omega_{\gamma}  (1 - z)
= \Omega_{\alpha}$ therefore,
\[ \Phi = \int J_{\gamma} J_{\alpha}^2  (1 - z) \Omega_{\gamma} \Rightarrow
   \mu = \mu (J_{\gamma} J_{\alpha}^2) = < J_{\gamma} ; J_{\alpha}^2  (1 - z)
   >_{\gamma} . \]
Hence, $2 \deg (J_{\alpha}) + 1 < \deg (J_{\gamma}) \Rightarrow \mu (P) = 0$
and the corresponding $\Phi$ is algebraic.

For the remaining four cases we have unfortunately no comparison of the
expression of $\mu$ with one of the two scalar products. So in general we are
not able to give any condition for the algebraicity ! Nevertheless, for
applications in specific examples one is therefore reduced just to check the
relation given by point 3 of Proposition \ref{prop:testphi}.

\subsection{Getting obstruction when $\Phi$ can be transcendental in the 6
possible cases of Table \ref{tab:phialg}}

Our main result is going to be the following

\begin{proposition}
  \label{prop:R1ve2}When $|k| \geqslant 3$ and $\Phi$ is transcendental, then
  in the six cases of Table \ref{tab:phialg}, $\tmop{VE}_{2, \alpha}^{\gamma}$
  is not virtually Abelian excepted maybe for $|k| = 3$ when
  $(\lambda_{\gamma} ; \lambda_{\alpha}) \in \tmop{Case} 3 \times
  \{\tmop{Case} 3 ; \tmop{Case} 4\}$.
\end{proposition}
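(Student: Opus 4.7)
The plan is to apply Proposition~\ref{prop:test1} together with the character test of Lemma~\ref{lem:char} at the monodromy around $z=0$, and verify in each of the six transcendent cases of Table~\ref{tab:phialg} that no Ostrowski relation $\Phi + dI_\nu \in K$ can hold. If $\tmop{VE}_{2,\alpha}^{\gamma}$ were virtually Abelian, Proposition~\ref{prop:test1} would give two Ostrowski relations $\Phi + d_\gamma I_\gamma \in K$ and $\Phi + d_\alpha I_\alpha \in K$. Since $K$ is algebraic over $\mathbb{C}(z)$ and $\Phi$ is assumed transcendental, both $d_\gamma$ and $d_\alpha$ must be non-zero: otherwise $\Phi$ would itself lie in $K$.

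The next step is to read off the characters of the local monodromy $\mathcal{M}_0$, which fixes $\mathbb{C}(z)$ and therefore qualifies as a Galois morphism in the sense of Lemma~\ref{lem:char}. By Lemma~\ref{lem:jordan}(3), $\chi_0(I_\nu) = e^{-4\pi \mathi a_\nu}$, which Table~\ref{tabjacobi} reduces to $e^{-2\pi \mathi /k}$ in Cases~1,2 and to $e^{2\pi \mathi /k}$ in Cases~3,4. Since $\Phi' = z^{E_0}(z-1)^{E_1} J_\gamma J_\alpha^2$ with $(z-1)^{E_1}J_\gamma J_\alpha^2$ single-valued around $z=0$, we have $\chi_0(\Phi) = e^{2\pi \mathi E_0}$ with $E_0$ read directly from Table~\ref{tab:phialg}: $E_0 = 1/k$ for $(\gamma,\alpha) \in \{(1,1),(1,2)\}$, $E_0 = -1/k$ for $(\gamma,\alpha) \in \{(1,3),(1,4)\}$, and $E_0 = -2/k$ for $(\gamma,\alpha) \in \{(3,3),(3,4)\}$.

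Now a direct case-by-case comparison closes the argument. For $(1,1),(1,2)$, the ratio $\chi_0(\Phi)/\chi_0(I_\gamma) = e^{4\pi \mathi /k}$ is not $1$ as soon as $|k| \geqslant 3$; for $(1,3),(1,4)$, the ratio $\chi_0(\Phi)/\chi_0(I_\alpha) = e^{-4\pi \mathi /k}$ is again non-trivial when $|k|\geqslant 3$; and for $(3,3),(3,4)$, both ratios equal $e^{-6\pi \mathi /k}$, which is $1$ precisely when $3/k \in \mathbb{Z}$, i.e., for $|k|\geqslant 3$ only when $|k|=3$. In every non-excluded case, at least one of the two required Ostrowski relations fails the character test of Lemma~\ref{lem:char}(1), forcing $\Phi$ to be algebraic and contradicting our hypothesis; hence $\tmop{VE}_{2,\alpha}^\gamma$ is not virtually Abelian.

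The bookkeeping in the case analysis, and in particular the correct matching of the exponents $a_\nu$ and $E_0$ coming from Tables~\ref{tabjacobi} and~\ref{tab:phialg}, is the only delicate point; everything else is routine character arithmetic. The genuine obstacle, and the reason the statement carries an ``excepted maybe'' clause, is precisely that the $\mathcal{M}_0$-character test becomes blind for $|k|=3$ in cases $(3,3)$ and $(3,4)$. Disposing of these residual cases would require either an analysis of $\mathcal{M}_\infty$, or the sharper sufficient conditions of Theorem~\ref{prop:ve2test2}, and is postponed.
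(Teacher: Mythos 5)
Your proof is correct and takes essentially the same route as the paper's: Proposition~\ref{prop:test1} combined with the $\mathcal{M}_0$-character test of Lemma~\ref{lem:char} and the exponents from Lemma~\ref{lem:jordan} and Tables~\ref{tabjacobi}, \ref{tab:phialg}, with your multiplicative ratios $e^{2\pi\mathi(E_0+2a_\nu)}$ matching the paper's additive conditions $E_0+2a_\nu\in\mathbbm{Z}$ exactly. The only (cosmetic) difference is that the paper first deduces an Ostrowski relation between $I_\gamma$ and $I_\alpha$ to discard the cases $(1,3)$ and $(1,4)$ via $a_\gamma\neq a_\alpha$, whereas you test $\Phi$ directly against $I_\alpha$ there; both give the same exceptional set.
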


\begin{proof}
  Let's assume that $\tmop{VE}_{2, \alpha}^{\gamma}$ is virtually Abelian and
  look to the restrictions imposed by this condition thanks to Proposition
  \ref{prop:test1} and its consequences. According to Lemma \ref{lem:jordan},
  the action of the monodromy around $z = 0$, is given by characters thanks to
  the following formulae
  \begin{equation}
    \mathcal{M}_0 (I'_{\gamma}) = \exp (\mathi 2 \pi ( - 2 a_{\gamma}))
    I_{\gamma}', \mathcal{M}_0 (\Phi') = \exp (\mathi 2 \pi E_0) \Phi' .
    \label{eq:char}
  \end{equation}
  Since $\Phi$ is transcendental, the two constants $d_{\gamma}$ and
  $d_{\alpha}$ in Proposition \ref{prop:test1}.1 both are non-zero. Therefore,
  according to the later, we get an Ostrowski between $I_{\gamma}$ and
  $I_{\alpha}$. As a consequence, thanks to Lemma \ref{lem:char}, the
  Ostrowski relation between $I_{\gamma}$ and $I_{\alpha}$ implies that
  \[ \exp (\mathi 2 \pi ( - 2 a_{\gamma})) = \exp (\mathi 2 \pi ( - 2
     a_{\alpha})) \Leftrightarrow 2 a_{\gamma} - 2 a_{\alpha} \in \mathbbm{Z}.
  \]
  But, by writing $a = \frac{1}{2} + \frac{\varepsilon}{2 k}$, with
  $\varepsilon = 1$ in Cases 1 or 2, $\varepsilon = - 1$ in Cases 3 or 4, we
  get that $2 a_{\gamma} - 2 a_{\alpha} \in \mathbbm{Z}$ iff,
  $\varepsilon_{\gamma} = \varepsilon_{\alpha}$. In other words we proved that
  an Ostrowski relation between $I_{\gamma}$ and $I_{\alpha}$ implies that
  \begin{equation}
    a_{\gamma} = a_{\alpha} \Leftrightarrow (\gamma ; \alpha) \in
    \{\tmop{Cases} 1 \tmop{or} 2\}^2 \cup \{\tmop{Cases} 3 \tmop{or} 4\}^2 .
    \label{eq:osI}
  \end{equation}
  This in fact eliminates two possibilities in Table \ref{tab:phialg}.
  
  Now, the Ostrowski relation between $\Phi$ and $I_{\gamma}$ implies
  similarly that $E_0 + 2 a_{\gamma} \in \mathbbm{Z}$. But $E_0 = a_{\gamma} +
  2 a_{\alpha} - 3 / 2 - 1 / 2 k$ and $a_{\gamma} = a_{\alpha}$, therefore,
  \[ E_0 + 2 a_{\gamma} = 5 a_{\gamma} - \frac{3}{2} - \frac{1}{2 k} = 1 +
     \frac{5 \varepsilon_{\gamma} - 1}{2 k} . \]
  For $\varepsilon = 1$, (Cases 1 or 2), $\frac{5 \varepsilon - 1}{2 k} =
  \frac{2}{k} \nin \mathbbm{Z}$, Hence $\tmop{VE}_{2, \alpha}^{\gamma}$ is not
  virtually Abelian.
  
  For $\varepsilon = - 1$, (Cases 3 or 4), $\frac{5 \varepsilon - 1}{2 k} =
  \frac{- 3}{k} \in \mathbbm{Z} \Leftrightarrow |k| = 3$. This proves that in
  general $\tmop{VE}_{2, \alpha}^{\gamma}$ is not virtually Abelian except
  maybe in the exceptional cases mentioned in the proposition. 
\end{proof}

\subsection{Getting Obstruction when $\Phi$ is algebraic }

Here, according to the second point of Theorem \ref{prop:ve2test2}, we have to
test an Ostrowski relation between $\Psi_{\alpha}$ and $I_{\alpha}$. Again, if
$\Psi_{\alpha}$ is algebraic such a relation hold and we get nothing new. As a
consequence our first task is going to be the study of this problem.

\subsubsection{Testing the algebraicity of $\Psi_{\alpha}$ and
$\Psi_{\gamma}$}

Here, according to Lemma \ref{lem:jordan},
\[ \Psi_{\alpha}' = \Phi I_{\alpha}' = \frac{\Phi}{x_1^2} = \frac{\Phi}{z^{2
   a_{\alpha}}  (z - 1)^{2 b_{\alpha}} J_{\alpha}^2 (z)}, \]
and similar formula with $\Psi_{\gamma}' = \Phi I_{\gamma}' = \Phi / y_1^2$.

According to Remark \ref{rem:psi}, we must compute the two $\Psi'$ with the
same $\Phi$ up to an additive constant. The precise forms of those expressions
are respectively given for $\Psi_{\alpha}'$ resp for $\Psi_{\gamma}'$ in the
following two tables

\begin{table}[h]
\[
     \begin{array}{|c|c|c|c|c|}
       \hline
       \gamma \backslash \alpha & 1 & 2 & 3 & 4\\
       \hline
       1 & \frac{R}{J^2_{\alpha}}  & \frac{R}{J^2_{\alpha}} &
       \frac{R}{J^2_{\alpha}} & \frac{R}{J^2_{\alpha}}\\
       \hline
       2 & \frac{Q}{(z - 1)^{1 / 2} J_{\alpha}^2} & \frac{Q}{(z - 1)^{3 / 2}
       J_{\alpha}^2} & \frac{Q}{(z - 1)^{1 / 2} J_{\alpha}^2} & \frac{Q}{(z -
       1)^{3 / 2} J_{\alpha}^2}\\
       \hline
       3 & \frac{Q}{z^{1 + 1 / k} J_{\alpha}^2} & \frac{Q}{z^{1 + 1 / k}
       J_{\alpha}^2} & \frac{R}{z^{1 / k} J^2_{\alpha}} & \frac{R}{z^{1 / k}
       J^2_{\alpha}}\\
       \hline
       4 & \frac{Q}{z^{1 / k}  (z - 1)^{1 / 2} J_{\alpha}^2} & \frac{Q}{z^{1 /
       k}  (z - 1)^{3 / 2} J_{\alpha}^2} & \frac{Q}{z^{1 / k}  (z - 1)^{1 / 2}
       J_{\alpha}^2} & \frac{Q}{z^{1 / k}  (z - 1)^{3 / 2} J_{\alpha}^2}\\
       \hline
     \end{array}
\]
     \caption{\label{tab:psialpha}}
   \end{table}

\begin{table}[h]
\[
     \begin{array}{|c|c|c|c|c|}
       \hline
       \gamma \backslash \alpha & 1 & 2 & 3 & 4\\
       \hline
       1 & \frac{R}{J^2_{\gamma}} & \frac{(z - 1) R}{J^2_{\gamma}} &
       \frac{R}{z^{2 / k} J^2_{\gamma}} & \frac{(z - 1) R}{z^{2 / k}
       J^2_{\gamma}}\\
       \hline
       2 & \frac{Q}{(z - 1)^{3 / 2} J_{\gamma}^2} & \frac{Q}{(z - 1)^{3 / 2}
       J_{\gamma}^2} & \frac{Q}{z^{2 / k} (z - 1)^{3 / 2} J_{\gamma}^2} &
       \frac{Q}{z^{2 / k} (z - 1)^{3 / 2} J_{\gamma}^2}\\
       \hline
       3 & \frac{Q}{z^{1 - 1 / k} J_{\gamma}^2} & \frac{(z - 1) Q}{z^{1 - 1 /
       k} J_{\gamma}^2} & \frac{R}{z^{1 / k} J^2_{\gamma}} & \frac{(z - 1)
       R}{z^{1 / k} J^2_{\gamma}}\\
       \hline
       4 & \frac{Q}{z^{- 1 / k} (z - 1)^{3 / 2} J_{\gamma}^2} & \frac{Q}{z^{-
       1 / k} (z - 1)^{3 / 2} J_{\gamma}^2} & \frac{Q}{z^{1 / k} (z - 1)^{3 /
       2} J_{\gamma}^2} & \frac{Q}{z^{1 / k} (z - 1)^{3 / 2} J_{\gamma}^2}\\
       \hline
     \end{array}
\]
     \caption{\label{tab:psigamma}}
   \end{table}
Let's explain briefly how those tables were computed. When we are in the ten
cases of Table \ref{tab:phialg}, where $\Phi$ is always algebraic, then $\Phi'
= z^{E_0}  (z - 1)^{E_1} P$, with at least one integral exponent. Assume for
instance that $E_1 \in \mathbbm{N}$, then we choose a primitive of the form
$\Phi = z^{E_0 + 1} Q$ with $Q \in \mathbbm{C} [z]$.

Now, when we are in the six remaining cases when the two exponents are not
integers,
\[ \Phi = \int \Omega P = \int z^{E_0}  (z - 1)^{E_1} P. \]
According to Theorem \ref{th:redint} points 1 and 2, $\Phi$ is algebraic iff
it can be computed in closed form
\[ \Phi = \Mho R = z^{E_0 + 1}  (z - 1)^{E_1 + 1} R \text{\tmop{with}} R \in
   \mathbbm{C} [z] . \]
Surprisingly, in those six cases we get the following implication
\[ (\Phi \tmop{alg}) \Rightarrow (\Psi_{\alpha} \tmop{and} \Psi_{\gamma}
   \tmop{alg}) . \]
Indeed in those six cases, $E_1 + 1 \in \{1 / 2 ; 3 / 2\}$ For $\nu \in
\{\alpha ; \gamma\}$,
\[ \Psi'_{\nu} = \frac{z^{E_0 + 1}  (z - 1)^{E_1 + 1} R}{z^{2 a_{\nu}}  (z -
   1)^{2 b_{\nu}} J_{\nu}^2 (z)} . \]
Since $2 b_{\nu}$ also belongs to $\{1 / 2 ; 3 / 2\}$, the exponent of
$\Psi'_{\nu}$ at $z = 1$ is an integer $\neq - 1$. As a consequence,
$\Psi_{\nu}$ is fixed by $\mathcal{M}_1$ and we can conclude thanks to Remark
\ref{rem:fuchs}.

As a consequence, when $\Phi$ is algebraic, we see form the above tables that
$\Psi_{\alpha}$ and $\Psi_{\gamma}$ are simultaneously algebraic very often.
In fact in 10 cases over the 16. Since it is much more difficult to prove the
transcendence than to show the algebraicity of an integral, we therefore
cannot say something general in the six remaining cases, when for example
$\lambda_{\gamma} \in \tmop{Case} 4$.

\subsubsection{Getting obstruction when $\Psi_{\alpha}$ or $\Psi_{\gamma}$ is
transcendental}

Here our main result is going to be the following

\begin{proposition}
  \label{prop:R2ve2}If $\Phi$ is algebraic, and $\Psi_{\alpha}$ is
  transcendental, then $\tmop{VE}_{2, \alpha}^{\gamma}$ is not virtually
  Abelian except maybe for the two cases $(\lambda_{\gamma} ;
  \lambda_{\alpha}) \in \tmop{Case} 4 \times \{\tmop{Case} 1 \tmop{or} 2\}$.
  
  Similarly, if $\Psi_{\gamma}$ is transcendental, then $\tmop{VE}_{2,
  \alpha}^{\gamma}$ is not virtually Abelian excepted maybe for the two cases
  $(\lambda_{\gamma} ; \lambda_{\alpha}) \in \tmop{Case} 4 \times
  \{\tmop{Case} 1 \tmop{or} 2\}$ or when $(\lambda_{\gamma} ;
  \lambda_{\alpha}) \in \tmop{Case} 2 \times \{\tmop{Case} 3 \tmop{or} 4\}$ if
  $|k| = 3$.
\end{proposition}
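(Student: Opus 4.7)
The plan is to follow the template of Proposition~\ref{prop:R1ve2}, but now the Ostrowski relations to be obstructed are the ones that Theorem~\ref{prop:ve2test2} extracts, under the hypothesis of virtual Abelianity of $\tmop{VE}_{2,\alpha}^{\gamma}$, for the integrals $\Psi_\alpha$ (point~(1)) and $\Psi_\gamma$ (point~(2)). In both cases the obstruction comes from Lemma~\ref{lem:char} applied to the local monodromies $\mathcal{M}_0$ and $\mathcal{M}_1$ around $z=0$ and $z=1$.

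First, assuming $\tmop{VE}_{2,\alpha}^{\gamma}$ virtually Abelian, point~(1) of Theorem~\ref{prop:ve2test2} yields $\Psi_\alpha - d_\alpha I_\alpha \in K$ for some $d_\alpha \in \mathbbm{C}$; transcendence of $\Psi_\alpha$ forces $d_\alpha \neq 0$, so this is a genuine non-trivial Ostrowski relation in the sense of Lemma~\ref{lem:char}(1). Likewise point~(2) gives $\Psi_\gamma - d_\gamma I_\gamma - d I_\alpha \in K$, a non-trivial Ostrowski relation when $\Psi_\gamma$ is transcendental, fitting Lemma~\ref{lem:char}(2) with $n=2$.

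Next, I read off the characters. By Lemma~\ref{lem:jordan}, $\chi_0(I'_\nu) = \exp(-4\pi\mathi a_\nu)$, equal to $\exp(-2\pi\mathi/k)$ in Cases~1--2 and $\exp(2\pi\mathi/k)$ in Cases~3--4, while $\chi_1(I'_\nu) = -1$ in every case. The characters of $\Psi'_\alpha$ and $\Psi'_\gamma$ are obtained directly as $\exp(2\pi\mathi \times \text{local exponent})$ at $z=0$ and $z=1$ from the expressions recorded in Tables~\ref{tab:psialpha} and~\ref{tab:psigamma}; the exponents that appear involve only $\pm 1/k$, $\pm 2/k$, $0$, $\pm 1/2$, $\pm 3/2$.

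Third, I apply Lemma~\ref{lem:char}. For the $\Psi_\alpha$ part I restrict to the ten pairs $(\gamma,\alpha)$ for which $\Phi$ is algebraic (the ``alg'' entries in Table~\ref{tab:phialg}) and compare $\chi_0(\Psi'_\alpha)$ and $\chi_1(\Psi'_\alpha)$ with $\chi(I'_\alpha)$ under the standing assumption $|k|\geqslant 3$. The $\mathcal{M}_0$ test (relying on $\pm 1/k, \pm 2/k \notin \mathbbm{Z}$) removes row~2 of Table~\ref{tab:psialpha} (character $1$ cannot equal $\exp(\pm 2\pi\mathi/k)$) together with the $\alpha \in \{3,4\}$ entries of rows~3 and~4 (which would require $2/k \in \mathbbm{Z}$); the remaining candidates are $(\gamma,\alpha) \in \{3,4\}\times\{1,2\}$. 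The $\mathcal{M}_1$ test then eliminates $\gamma=3$, since in row~3 the $z=1$ exponent of $\Psi'_\alpha$ is an integer (character $+1$), whereas $\chi_1(I'_\alpha)=-1$. Exactly $\tmop{Case}~4 \times \{\tmop{Case}~1, \tmop{Case}~2\}$ survives.

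For $\Psi_\gamma$, Lemma~\ref{lem:char}(2) requires that at each singular point $\chi(\Psi'_\gamma) \in \{\chi(I'_\gamma), \chi(I'_\alpha)\}$. An analogous scan of the ten algebraic-$\Phi$ entries of Table~\ref{tab:psigamma} leaves $(\gamma,\alpha) \in \tmop{Case}~4 \times \{\tmop{Case}~1, \tmop{Case}~2\}$ for every $|k|\geqslant 3$, plus the conditional survivors $(\gamma,\alpha) \in \tmop{Case}~2 \times \{\tmop{Case}~3, \tmop{Case}~4\}$; these last two are kept only when $\chi_0(\Psi'_\gamma) = \exp(-4\pi\mathi/k)$ matches $\chi_0(I'_\alpha) = \exp(2\pi\mathi/k)$, which demands $3/k \in \mathbbm{Z}$ and hence $|k|=3$ under $|k|\geqslant 3$.

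The main obstacle is not conceptual but bookkeeping: the character has to be tabulated carefully at ten entries of each of Tables~\ref{tab:psialpha} and~\ref{tab:psigamma}, tested against two monodromies, with close attention to the small values of $|k|$ where otherwise forbidden coincidences $r/k \in \mathbbm{Z}$ create spurious survivors. Once this is organised, the whole argument is just Lemma~\ref{lem:char} on top of the exponent data in Tables~\ref{tab:I}, \ref{tab:psialpha} and~\ref{tab:psigamma}.
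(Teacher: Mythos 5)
Your proposal is correct and follows essentially the same route as the paper: extract the Ostrowski relations $\Psi_{\alpha}-d_{\alpha}I_{\alpha}\in K$ and $\Psi_{\gamma}-d_{\gamma}I_{\gamma}-dI_{\alpha}\in K$ from Theorem \ref{prop:ve2test2}, then kill cases with Lemma \ref{lem:char} by comparing the monodromy characters of $\Psi'_{\alpha}$, $\Psi'_{\gamma}$ (Tables \ref{tab:psialpha}, \ref{tab:psigamma}) with those of the $I'_{\nu}$ (Table \ref{tab:I}) at $z=0$ and $z=1$, the only difference being that you run the character test over all ten algebraic-$\Phi$ entries while the paper first narrows to the six entries where the $\Psi$'s can actually be transcendental. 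Your case-by-case eliminations and the surviving exceptional cases (including the $|k|=3$ condition arising from $3/k\in\mathbbm{Z}$) agree with the paper's.
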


\begin{proof}
  From now we assume that $\tmop{VE}_{2, \alpha}^{\gamma}$ is virtually
  Abelian. According to Theorem \ref{prop:ve2test2}, we must get two Ostrowski
  relations: one between $\Psi_{\alpha}$ and $I_{\alpha}$, and another one
  between $\Psi_{\gamma}$, $I_{\gamma}$ and $I_{\alpha}$. Let's assume that we
  get such a relation between $\Psi_{\gamma}$ $I_{\gamma}$ and $I_{\alpha}$.
  The arguments for $\Psi_{\alpha}$ being simpler. According to Lemma
  \ref{lem:char}, if $\Psi_{\gamma}$ is transcendental, we must get the
  following relation between the exponents at $z = 0$ :
  \[ e_0 (\Psi_{\gamma}') \in \{e_0 (I'_{\gamma}) ; e_0 (I'_{\alpha})\}
     \tmop{mod} \mathbbm{Z}. \]
  Now a direct comparison, of Table \ref{tab:psigamma}, with Table
  \ref{tab:I}, in the six cases where $\Psi_{\gamma}$ can be transcendental,
  gives that $e_0 (\Psi_{\gamma}') \in \{e_0 (I'_{\gamma}) ; e_0
  (I'_{\alpha})\} \tmop{mod} \mathbbm{Z}$, when $(\lambda_{\gamma} ;
  \lambda_{\alpha}) \in \tmop{Case} 4 \times \{\tmop{Case} 1 \tmop{or} 2\}$ or
  when $(\lambda_{\gamma} ; \lambda_{\alpha}) \in \tmop{Case} 2 \times
  \{\tmop{Case} 3 \tmop{or} 4\}$, if $|k| = 3$. Moreover, when
  $(\lambda_{\gamma} ; \lambda_{\alpha}) \in \tmop{Case} 4 \times
  \{\tmop{Case} 1 \tmop{or} 2\}$, we get the same conclusion for the exponents
  at $z = 1$. This is the reason why, these two cases cannot be a-priory
  refined.
\end{proof}

\subsubsection{Some results when $\Phi, \Psi_{\alpha}$ and $\Psi_{\gamma}$ are
algebraic}

As we said before those cases happen very often in 10 cases over the the 16.
This the right moment to apply Theorem \ref{prop:ve2test2}, with $d_{\alpha} =
d_{\gamma} = e = 0$. Here, $X = \int \Psi_{\alpha} I_{\alpha}'$ and $M = \int
\Psi_{\alpha} I_{\gamma}' + \Psi_{\gamma} I_{\alpha}'$ are first level
integrals and we first have to see if they can be computed in polynomial form.
That is, if they satisfied Ostrowski relations with $I_{\alpha}$ and
$I_{\gamma}$...

Precisely, according to the theorem we will have two cases
\begin{itemizedot}
  \item When $I_{\alpha}$ and $I_{\gamma}$ are independent then we must check
  the following simple form of the relation given in point 5 of the theorem
  \begin{equation}
    \left(\begin{array}{c}
      M\\
      X
    \end{array}\right) = E \left(\begin{array}{c}
      I_{\alpha}\\
      I_{\gamma}
    \end{array}\right) + G, \label{eq:MXind}
  \end{equation}
  where $E$ is symmetric.
  
  \item When \ $I_{\alpha}$ and $I_{\gamma}$ are dependant, that is
  $I_{\gamma} - \theta I_{\alpha} \in K$, then we must check a relation of the
  form
  \begin{equation}
    M + \theta X = aI + g. \label{eq:MXdep}
  \end{equation}
\end{itemizedot}
Now, according to equation (\ref{eq:osI}), we have a necessary condition for
the independence of the two integrals $I_{\alpha}$ and $I_{\gamma}$, which we
resume in the following new table

\begin{table}[h]
  \begin{tabular}{|c|c|c|c|c|}
    \hline
    $\gamma \backslash \alpha$ & Case 1 & Case 2 & Case 3 & Case 4\\
    \hline
    Case 1 & ? & ? & Ind & Ind\\
    \hline
    Case 2 & ? & ? & Ind & Ind\\
    \hline
    Case 3 & Ind & Ind & ? & ?\\
    \hline
    Case 4 & Ind & Ind & ? & ?\\
    \hline
  \end{tabular}
  \caption{\label{tabIdep}}
\end{table}

So we see again, that in half of the cases the integrals are independent, and
for the remaining cases we obviously do not know.

We have made explicit computations of the integrals $X$ and $M$, they are
first level integrals so they will obey the rule given by Remark
\ref{rem:fuchs}. Nevertheless, they are integrals of the form
\[ \int \frac{P \Omega}{J_{\alpha} J_{\gamma}^2} \text{\tmop{or}} \int
   \frac{P \Omega}{J_{\gamma} J_{\alpha}^2} . \]
As a consequence, they do not enter into the context of Theorem
\ref{th:redint}.

\subsection{Experimental considerations for $\tmop{VE}_{2,
\alpha}^{\gamma}$.}\label{sec:exp}

Here we are going to join the information given by Propositions
\ref{prop:R1ve2} and \ref{prop:R2ve2}, the previous tables and some
experiments given by computers. This will give a picture of the behaviour of
the Galois group of $\tmop{VE}_{2, \alpha}^{\gamma}$. For simplicity, we will
assume that
\[ |k| \geqslant 5. \]
Indeed, the Propositions above showed, that for $|k| = 3$ some exceptional
behaviour occur. Experiments also show that for $|k| = 4$, we also get new
exceptions. This is quite normal since these two cases correspond
geometrically to the situation where the hyper-elliptic curve parametrised by
$t \mapsto (\varphi (t), \dot{\varphi} (t))$ is in fact an elliptic curve.

\paragraph{Step 1: when $\Phi$ is transcendental}

According to Section \ref{sec:testphi}, we found that $\Phi$ can be
transcendental in 6 over the 16 cases, with half of the possibilities when
$(\gamma, \alpha) \in \{(1, 1) ; (1, 2)\}$. It seems that experiments are
showing that away from the predicted cases, $\Phi$ is transcendental. As a
consequence, thanks to Proposition \ref{prop:R1ve2}, the probability to get
obstruction when $\Phi$ is transcendental is
\[ p_{\Phi} = 5 / 16. \]

\paragraph{Step 2: when $\Phi$ is algebraic}

Here for convenience, we present a new table resuming the case when $\Phi$ is
algebraic

\begin{table}[h]
  \begin{tabular}{|c|c|c|c|c|}
    \hline
    $\gamma \backslash \alpha$ & Case 1 & Case 2 & Case 3 & Case 4\\
    \hline
    Case 1 & $A$ & $A'$ &  & \\
    \hline
    Case 2 & $B$ & $B'$ & $C$ & $C'$\\
    \hline
    Case 3 & $D$ & $D'$ &  & \\
    \hline
    Case 4 & $E$ & $E'$ & $F$ & $F'$\\
    \hline
  \end{tabular}
  \caption{}
\end{table}

The empty cases correspond to the four cases where $\Phi$ is transcendental.
Again, for each letter $X$, we will denote by $p_X$ the probability to get
obstruction in the context given by the corresponding letter. Here, the first
thing to check is the possible transcendence of either $\Psi_{\alpha}$ or
$\Psi_{\gamma}$ in order to apply Proposition \ref{prop:R2ve2}. If nothing
subsequent occurs from this test, then we check one of the two relations
(\ref{eq:MXind}) or (\ref{eq:MXdep}) depending of the possible dependence of
the integrals $I_{\alpha}$, $I_{\gamma}$ in order to apply Theorem
\ref{prop:ve2test2}.

\subparagraph{In $A$ and $A'$:}According to Section \ref{sec:testphi}, $\Phi$
is algebraic when $2 \deg (J_{\alpha}) < \deg (J_{\gamma})$ in $A$ and, when
$2 \deg (J_{\alpha}) + 1 < \deg (J_{\gamma})$ in $A'$. In both cases,
according to Tables \ref{tab:psialpha} and \ref{tab:psigamma} $\Psi_{\alpha}$
and $\Psi_{\gamma}$ are algebraic. Moreover, relation (\ref{eq:MXind}) hold
for some symmetric matrix $E$. Observe that here, we do not need to check the
dependence of the integrals $I$, since if this happens then we would get
(\ref{eq:MXind})$\Rightarrow$(\ref{eq:MXdep}). As a consequence, there is no
obstruction and
\[ p_A = p_{A'} = 0. \]
For the remaining letters, $\Phi$ is always algebraic.

\subparagraph{In $B$ and $B'$:}According to Tables \ref{tab:psialpha} and
\ref{tab:psigamma}, $\Psi_{\alpha}$ and $\Psi_{\gamma}$ are algebraic.
Moreover, experiments give that $M$ and $X$ are algebraic. As a consequence,
relation (\ref{eq:MXind}) is trivially satisfied with $E = 0$. Hence,
\[ p_B = p_{B'} = 0. \]
\subparagraph{In $C$ and $C'$:}According to Table \ref{tab:psialpha}
$\Psi_{\alpha}$ is algebraic. Nevertheless, experience shows that
$\Psi_{\gamma}$ is transcendental, hence according to Proposition
\ref{prop:R2ve2}, $\tmop{VE}_{2, \alpha}^{\gamma}$ \ is not virtually Abelian
and,
\[ p_C = p_{C'} = 1 / 16. \]
\subparagraph{In $D$ and $D'$:}According to Tables \ref{tab:psialpha} and
\ref{tab:psigamma}, $\Psi_{\alpha}$ and $\Psi_{\gamma}$ are algebraic. Here,
thanks to Table \ref{tabIdep}, the two integrals $I_{\alpha}$ and $I_{\gamma}$
are independent. In both cases, experiments show that $M$ is algebraic and $X$
is transcendental. In $D$, we get an Ostrowski relation of the form $X =
aI_{\alpha} + g$. Therefore, in (\ref{eq:MXind}) the matrix
\[ E = \left(\begin{array}{cc}
     0 & 0\\
     a & 0
   \end{array}\right), \text{\tmop{with}} a \neq 0, \]
is not symmetric. Therefore, there is obstruction. In $D'$ the situation is
quite similar excepted that there is no Ostrowski relation between $X,
I_{\alpha}$ and $I_{\gamma}$, so (\ref{eq:MXind}) is not satisfied. Hence,
\[ p_D = p_{D'} = 1 / 16. \]
\subparagraph{$\tmop{In} E$ and $E'$:} Here, we cannot apply directly
Proposition \ref{prop:R2ve2}. And in fact experiments show that we get two
Ostrowski relations, one between $\Psi_{\alpha}$ and $I_{\alpha}$ and the
other between $\Psi_{\gamma}$ and $I_{\gamma}$. Moreover, $\Psi_{\alpha}$ and
$\Psi_{\gamma}$ are transcendental. But experiments also show that there is no
possible Ostrowski relation between $M$, $I_{\alpha}$ and $I_{\gamma}$.
Therefore, none of the equations (\ref{eq:MXind}) or (\ref{eq:MXdep}) can be
satisfied. So $\tmop{VE}_{2, \alpha}$ is not virtually Abelian and,
\[ p_E = p_{E'} = 1 / 16. \]
\subparagraph{In $F$ and $F'$:}Experiments show that: in $F$ both
$\Psi_{\alpha}$ and $\Psi_{\gamma}$ are algebraic iff $2 \deg (J_{\alpha}) >
\deg (J_{\gamma})$. Similarly, in $F'$, both $\Psi_{\alpha}$ and
$\Psi_{\gamma}$ are algebraic iff $2 \deg (J_{\alpha}) + 1 > \deg
(J_{\gamma})$. Moreover, if these conditions on the degrees of the Jacobi
polynomials are satisfied, then $M$ and $X$ are algebraic. Hence, we get half
obstruction in each case and,
\[ p_F = p_{F'} = 1 / 32. \]
As a consequence, the probability to get obstruction when $\Phi$ is algebraic
is
\[ p_{\tmop{alg}} = p_A + \cdots + p_{F'} = 7 / 16. \]
Hence, the total probability to get obstruction is
\[ p_T = p_{\Phi} + p_{\tmop{alg}} = 5 / 16 + 7 / 16 = 3 / 4. \]
To our point of view, there are two significant conclusions that can be
derived, from this study : First, that there is still a lot a obstruction at
the level of the second variational equation. Indeed, it seems that there is a
quite big distance between solvable Galois groups and virtually Abelian ones.
Secondly, although it is comparatively much more complicated to test, the most
important obstruction to the virtual Abelianity of $\tmop{VE}_{2,
\alpha}^{\gamma}$ happens when $\Phi$ is algebraic.

\section{Considerations about $\tmop{EX}_{2, \alpha, \beta}^{\gamma}$}

Here we follow the same strategy. But now the main technical difficulty comes
in distinguishing the 64 possibilities for the cases satisfied by $\gamma,
\beta, \alpha$. We will therefore have to deal with spatial tables of 64
entries ! As a consequence, we will not give definitive results. Nevertheless,
Theorem \ref{prop:EXtest2}, can be used to deal with the complete study of
specific potentials.

\subsection{Counting the cases when $\Phi = \int \omega u_1 y_1 x_1$ is
algebraic}

Since the most check able obstructions are going to happen when $\Phi$ is
transcendental, at first glance, we count the possible numbers of such
occurrences. Here, by using similar arguments as in Table \ref{tab:phialg}, we
are going to count 44, possibilities where $\Phi$ is algebraic. Therefore, we
will be left with at most 20 cases where $\Phi$ can be transcendental !

Here,
\[ u_1 = z^{a_{\gamma}}  (z - 1)^{b_{\gamma}} J_{\gamma} ; y_1 = z^{a_{\beta}}
   (z - 1)^{b_{\beta}} J_{\beta} ; x_1 = z^{a_{\alpha}}  (z - 1)^{b_{\alpha}}
   J_{\alpha} \Rightarrow \Phi' = z^{E_0}  (z - 1)^{E_1} P (z), \]
With $P (z) = J_{\gamma} J_{\beta} J_{\alpha}$ and,
\[ E_0 = a_{\gamma} + a_{\beta} + a_{\alpha} - \frac{3}{2} - \frac{1}{2 k} ;
   E_1 = b_{\gamma} + b_{\beta} + b_{\alpha} - \frac{5}{4} . \]
Again, as in Section \ref{sec:tobe}, $\nocomma \Phi$ is going to be algebraic
when at least one of the two exponents $E_0$ or $E_1$ is an integer distinct
from -1.

\subsubsection{Values of $E_0$}

If we write $a = \frac{1}{2} + \frac{\varepsilon}{2 k}$ with $\varepsilon = 1$
in Cases 1 or 2 and $\varepsilon = - 1$ in Cases 3 or 4, we get
\[ E_0 = \frac{\varepsilon_{\gamma} + \varepsilon_{\beta} +
   \varepsilon_{\alpha} - 1}{2 k} . \]
Then $E_0 \in \{1 / k ; 0 ; - 1 / k ; - 2 / k\}$, when $\tmop{Card} (i|
\varepsilon_i = 1) \in \{3 ; 2 ; 1 ; 0\}$. As a consequence,
\[ E_0 \in \mathbbm{Z} \Leftrightarrow E_0 = 0 \Leftrightarrow \tmop{Card} \{i
   \in \{\gamma, \beta, \alpha\}| \varepsilon_i = 1\} = 2. \]
This happens in the $3 \times 8 = 24$ possibilities listed in the following
table

\begin{table}[h]
  \begin{tabular}{|c|c|c|c|}
    \hline
    $E_0 \in \mathbbm{Z}$ & $\gamma$ & $\beta$ & $\alpha$\\
    \hline
    cases $L_1$ & 1 or 2 & 1 or 2 & 3 or 4\\
    \hline
    cases $L_2$ & 1 or 2 & 3 or 4 & 1 or 2\\
    \hline
    cases $L_3$ & 3 or 4 & 1 or 2 & 1 or 2\\
    \hline
  \end{tabular}
  \caption{\label{tab:E0int}}
\end{table}

\subsubsection{Values of $E_1$}

Since $b = 1 / 4$ in Cases 1 or 3 and $b = 3 / 4$ in Cases 2 or 4, we get
\[ \left\{ \begin{array}{lllll}
     3 \tmop{cases} 1 \tmop{or} 3 & \Rightarrow & E_1 = - 1 / 2 &  & \\
     2 \tmop{cases} 1 \tmop{or} 3 & \Rightarrow & E_1 = 0 & \in \mathbbm{Z} &
     24 \tmop{possibilities}\\
     1 \tmop{cases} 1 \tmop{or} 3 & \Rightarrow & E_1 = 1 / 2 &  & \\
     0 \tmop{cases} 1 \tmop{or} 3 & \Rightarrow & E_1 = 1 & \in \mathbbm{Z} &
     8 \tmop{possibilities}
   \end{array} \right. \]
As a consequence $E_1 \in \mathbbm{Z}$ in 32 possibilities.

\subsubsection{Counting when $E_0$ and $E_1$ both are integers}

This happens when either $(E_0 ; E_1) = (0 ; 1)$ either $(E_0 ; E_1) = (0 ;
0)$.

\paragraph{When $(E_0 ; E_1) = (0 ; 1)$}Then, $E_1 = 1$ implies that $(\gamma
; \beta ; \alpha) \in \{\tmop{Cases} 2 \tmop{or} 4\}^3$ and the intersection
with the table for $E_0 = 0$, gives 3 possibilities which are the cyclic
permutations of
\[ (\gamma ; \beta ; \alpha) \in (2 ; 2 ; 4) . \]
\paragraph{When $(E_0 ; E_1) = (0 ; 0)$}Then $E_1 = 0$ implies the following
possibilities

\begin{table}[h]
  \begin{tabular}{|c|c|c|c|}
    \hline
    $E_1 = 0$ & $\gamma$ & $\beta$ & $\alpha$\\
    \hline
    cases $L'_1$ & 1 or 3 & 1 or 3 & 2 or 4\\
    \hline
    cases $L'_2$ & 1 or 3 & 2 or 4 & 1 or 3\\
    \hline
    cases $L'_3$ & 2 or 4 & 1 or 3 & 1 or 3\\
    \hline
  \end{tabular}
  \caption{}
\end{table}

As a consequence, if we compute $L_i \cap L'_j$ we get $9 = 3 \times 3$
distinct cases. For example, $L_1 \cap L'_1$ implies $(\gamma ; \beta ;
\alpha) \in (1 ; 1 ; 4)$.

Therefore, the two exponents both are integers in $3 + 9 = 12$ cases.

\subsubsection{Conclusion}

Since we get 24 possibilities for $E_0 \in \mathbbm{Z}$, 32 possibilities for
$E_1 \in \mathbbm{Z}$ and 12 for both integral exponents. $\Phi$ is going to
be algebraic in $44 = 24 + 32 - 12$ cases. As a consequence, in the spatial
table of 64 entries for $(\gamma ; \beta ; \alpha)$, there are at most 20
possibilities where $\Phi$ can be transcendental.

\subsection{Counting when all the $\Psi_{\mu}$ are algebraic in the 44 case
where $\Phi$ is algebraic}

This is also made in order to find the cases where there is no obstruction.

In fact thanks to the consideration above we count 16 cases over 64 where
everybody is certainly algebraic.

\subsection{Getting obstruction with the assumption that $\Phi$ is
transcendental }

Here our main result is going to be the following one which is very similar in
its statement and proof to Proposition \ref{prop:R1ve2}.

\begin{proposition}
  \label{prop:R1EX}For $|k| \geqslant 3$, in the 20 possible cases when $\Phi$
  can be transcendental, $\tmop{EX}_{2, \alpha, \beta}^{\gamma}$ is not
  virtually Abelian excepted maybe for $|k| = 3$ and $(\gamma ; \beta ;
  \alpha) \in \{\tmop{Cases} 3 \tmop{or} 4\}^3$.
\end{proposition}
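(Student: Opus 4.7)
The plan is to mirror the proof of Proposition \ref{prop:R1ve2} almost verbatim, using the three Ostrowski relations supplied by point (2) of Proposition \ref{prop:test1} in place of the two relations used there. First I assume that $\tmop{EX}_{2,\alpha,\beta}^{\gamma}$ is virtually Abelian and that $\Phi = \int \omega u_1 y_1 x_1$ is transcendental. Then the three assertions that $\Phi + d_\mu I_\mu$ is algebraic over $\mathbbm{C}(z)$ for $\mu \in \{\alpha,\beta,\gamma\}$ must all feature $d_\mu \neq 0$, and pairwise subtraction yields an Ostrowski relation between $I_\mu$ and $I_\nu$ for each pair of distinct indices.

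Next I invoke Lemma \ref{lem:char} applied to the monodromy $\mathcal{M}_0$ at $z = 0$. By Lemma \ref{lem:jordan}, $\mathcal{M}_0$ acts on the derivatives by the characters $\chi(I_\mu') = \exp(-4\pi\mathi a_\mu)$ and $\chi(\Phi') = \exp(2\pi\mathi E_0)$. Writing $a_\mu = \frac{1}{2} + \frac{\varepsilon_\mu}{2k}$ with $\varepsilon_\mu = +1$ in Cases 1 or 2 and $\varepsilon_\mu = -1$ in Cases 3 or 4, the pairwise Ostrowski relations between the $I_\mu$ force $(\varepsilon_\mu - \varepsilon_\nu)/k \in \mathbbm{Z}$. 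Since $|k| \geq 3$ and $|\varepsilon_\mu - \varepsilon_\nu| \leq 2$, all three signs must coincide to a common value $\varepsilon \in \{\pm 1\}$; equivalently, $(\gamma;\beta;\alpha)$ lies entirely in Cases 1 or 2 or entirely in Cases 3 or 4.

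Finally, each Ostrowski relation $\Phi + d_\mu I_\mu$ with $d_\mu \neq 0$ imposes the character match $E_0 + 2 a_\mu \in \mathbbm{Z}$. A direct computation, using that all the $a_\mu$ share the common $\varepsilon$, gives
\[ E_0 = a_\gamma + a_\beta + a_\alpha - \frac{3}{2} - \frac{1}{2k} = \frac{3\varepsilon - 1}{2k}, \qquad E_0 + 2 a_\mu = 1 + \frac{5\varepsilon - 1}{2k}. \]
For $\varepsilon = +1$ this would require $2/k \in \mathbbm{Z}$, impossible for $|k| \geq 3$; for $\varepsilon = -1$ it requires $-3/k \in \mathbbm{Z}$, which within our range forces $|k| = 3$. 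Outside of the exceptional configuration $|k| = 3$ with $(\gamma;\beta;\alpha) \in \{\text{Cases 3 or 4}\}^3$, virtual Abelianity therefore fails, establishing the claim.

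The main obstacle is essentially non-existent here: the argument is a straightforward three-variable enlargement of the one carried out in Proposition \ref{prop:R1ve2}, and the character arithmetic is identical once the pairwise consequences of the three primary Ostrowski relations are extracted. The only point worth mentioning is the verification that pairwise elimination really produces three genuine Ostrowski relations between the $I_\mu$, which is automatic from $d_\mu \neq 0$ and the common summand $\Phi$; no additional ingredient beyond what already appears in the analogous $\tmop{VE}_2$ argument is required.
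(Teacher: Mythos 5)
Your proof is correct and follows essentially the same route as the paper's: both deduce the three non-trivial Ostrowski relations from Proposition \ref{prop:test1}.2, use the $\mathcal{M}_0$-characters to force $a_\gamma=a_\beta=a_\alpha$ (the paper simply cites equation (\ref{eq:osI}) from the proof of Proposition \ref{prop:R1ve2}, whereas you redo the small arithmetic), and then reduce to $E_0+2a_\mu=1+\frac{5\varepsilon-1}{2k}$ with the identical case split on $\varepsilon=\pm 1$. No gaps; the computation and conclusion match the paper's exactly.
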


\begin{proof}
  From now we assume that $\tmop{EX}_{2, \alpha, \beta}^{\gamma}$ is virtually
  Abelian. According to Proposition \ref{prop:test1}, we get three Ostrowski
  relations $\Phi + d_{\gamma} I_{\gamma}$, $\Phi + d_{\beta} I_{\beta}$ and
  $\Phi + d_{\alpha} I_{\alpha}$ are algebraic over $\mathbbm{C} (z)$. Since
  $\Phi$ is transcendental, the three constants $d_i$ are non-zero. As a
  consequence we get three Ostrowski relations between any two $I_i$ and
  $I_j$, for $i \neq j$. But we have seen in the proof of Proposition
  \ref{prop:R1ve2}, that such a relation implies that $a_i = a_j$ (see
  equation(\ref{eq:osI})). And we can therefore deduce that
  \[ (\gamma ; \beta ; \alpha) \in \{\tmop{Cases} 1 \tmop{or} 2\}^3 \cup
     \{\tmop{Cases} 3 \tmop{or} 4\}^3 . \]
  Now, the Ostrowski relation between $\Phi$ and $I_{\gamma}$ implies
  similarly that $E_0 + 2 a_{\gamma} \in \mathbbm{Z}$. But $E_0 = a_{\gamma} +
  a_{\beta} + a_{\alpha} - 3 / 2 - 1 / 2 k$ and $a_{\gamma} = a_{\beta} =
  a_{\alpha}$, therefore,
  \[ E_0 + 2 a_{\gamma} = 5 a_{\gamma} - \frac{3}{2} - \frac{1}{2 k} = 1 +
     \frac{5 \varepsilon_{\gamma} - 1}{2 k} . \]
  When $\varepsilon = 1$, (Cases 1 or 2), $\frac{5 \varepsilon - 1}{2 k} =
  \frac{2}{k} \nin \mathbbm{Z}$, Hence $\tmop{EX}_{2, \alpha, \beta}^{\gamma}$
  is not virtually Abelian.
  
  When $\varepsilon = - 1$, (Cases 3 or 4), $\frac{5 \varepsilon - 1}{2 k} =
  \frac{- 3}{k} \in \mathbbm{Z} \Leftrightarrow |k| = 3$. This proves that in
  general $\tmop{EX}_{2, \alpha, \beta}^{\gamma}$ is not virtually Abelian
  excepted maybe in the exceptional cases mentioned in the proposition.

\end{proof}

\subsection{Getting obstruction in the 44 cases when $\Phi$ is algebraic}

In order to be able to exploit the Ostrowski relations given by Theorem
\ref{prop:EXtest2}, in these 44 cases, we first have to investigate the
possible algebraicity of the $\Psi_{\mu}$, when $\mu \in \{\gamma, \beta,
\alpha\}$.

\subsubsection{About the algebraicity of the integrals $\Psi_{\mu}$ for $\mu
\in \{\gamma, \beta, \alpha\}$}

Here $\Psi'_{\mu} = \Phi I_{\mu}' = z^{E_0}  (z - 1)^{E_1} Q (z) I'_{\mu}$.
Let's denote $N_0^{\mu}$ and $N_1^{\mu}$ the respective exponents of
$\Psi'_{\mu}$ at $z = 0$ and $z = 1$, respectively. Up to addition of a
positive integer we get
\[ N_0^{\mu} = E_0 - 2 a_{\mu} ; N_1^{\mu} = E_1 - 2 b_{\mu} . \]
Direct computation gives
\[ N_0^{\mu} \in \{- 1, \pm 1 / k - 1, - 2 / k - 1\}; N_1^{\mu} \in \{- 3 /
   2, - 1, - 1 / 2, 0, 1 / 2\}. \]

\subsubsection{Getting obstruction when at least one $\Psi_{\mu}$ is
transcendental}

\begin{proposition}
  \label{prop:R2EX}When $\Phi$ is algebraic in the 44 \ cases mentioned above
  we get :
  \begin{enumeratenumeric}
    \item Let us assume that $|k| \geqslant 4$. If at least one of the
    $\Psi_{\mu}$ is transcendental, then $\tmop{EX}_{2, \alpha,
    \beta}^{\gamma}$ is not virtually Abelian excepted maybe in the 12 cases
    where $E_0$ and $E_1$ both are integers.
    
    \item For$ |k| = 3$, we get the same conclusion if we assume that at least
    two of the three integrals $\Psi_{\mu}$ are transcendental.
  \end{enumeratenumeric}
\end{proposition}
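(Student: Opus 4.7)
\textbf{Proof proposal for Proposition \ref{prop:R2EX}.}

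The plan is to follow the same scheme as Propositions \ref{prop:R1ve2} and \ref{prop:R2ve2}. First I would assume that $\tmop{EX}_{2,\alpha,\beta}^{\gamma}$ is virtually Abelian; Theorem \ref{prop:EXtest2}(1) then yields, for each $\mu \in \{\alpha,\beta,\gamma\}$, an Ostrowski relation
\[
\Psi_\mu = d_{\mu,\alpha} I_\alpha + d_{\mu,\beta} I_\beta + d_{\mu,\gamma} I_\gamma + f_\mu, \qquad f_\mu \in K,
\]
in which at least one coefficient $d_{\mu,\nu}$ must be non-zero whenever $\Psi_\mu$ is transcendental (else $\Psi_\mu = f_\mu \in K$ would itself be algebraic). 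The whole argument then consists in feeding this relation into the character criterion of Lemma \ref{lem:char}(2), applied separately with the two local monodromies $\mathcal{M}_0$ and $\mathcal{M}_1$.

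Since $\Phi' = z^{E_0}(z-1)^{E_1} P(z)$ with $P$ polynomial not vanishing at $0$ and $1$, and since the admissible values of $E_0$ and $E_1$ are never equal to $-1$, the character of $\Phi$ at the singularity $z=j$ is simply $\exp(2\pi\mathi E_j)$. Combined with $\chi_0(I'_\nu) = \exp(-2\pi\mathi\,\varepsilon_\nu/k)$ and $\chi_1(I'_\nu) = -1$ provided by Lemma \ref{lem:jordan}(2)--(3) (writing $a_\nu = \tfrac{1}{2} + \tfrac{\varepsilon_\nu}{2k}$ with $\varepsilon_\nu \in \{\pm 1\}$), the characters of $\Psi'_\mu = \Phi\,I'_\mu$ become
\[
\chi_0(\Psi'_\mu) = \exp\!\bigl(2\pi\mathi\,(E_0 - \varepsilon_\mu/k)\bigr), \qquad \chi_1(\Psi'_\mu) = -\exp(2\pi\mathi E_1).
\]
At $z=1$, Lemma \ref{lem:char}(2) then forces $\chi_1(\Psi'_\mu) = -1$, hence $E_1 \in \mathbbm{Z}$, uniformly in $\mu$; so any $(\gamma,\beta,\alpha)$ with $E_1 \notin \mathbbm{Z}$ is already incompatible with a transcendental $\Psi_\mu$.

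Next I would analyse the $\mathcal{M}_0$-constraint. Matching $\chi_0(\Psi'_\mu) = \chi_0(I'_\nu)$ for some admissible $\nu \in \{\alpha,\beta,\gamma\}$ translates into $E_0 + (\varepsilon_\nu - \varepsilon_\mu)/k \in \mathbbm{Z}$, which after multiplication by $k$ and the identity $kE_0 = (S-1)/2$, where $S := \varepsilon_\gamma + \varepsilon_\beta + \varepsilon_\alpha \in \{-3,-1,1,3\}$, becomes the simple arithmetic condition
\[
\tfrac{S-1}{2} + \varepsilon_\nu - \varepsilon_\mu \in k\,\mathbbm{Z}.
\]
A direct case check on the sixteen triples $(S,\varepsilon_\mu,\varepsilon_\nu)$ shows that, for $|k| \geq 3$, the only admissible solutions are $S=1$ (equivalently $E_0 \in \mathbbm{Z}$, which picks out the 24 cases of Table \ref{tab:E0int}), together with the single exceptional configuration $(S,\varepsilon_\mu,\varepsilon_\nu) = (-1,+1,-1)$, which reduces to $-3/k \in \mathbbm{Z}$ and therefore only occurs when $|k| = 3$.

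Combining the two constraints closes both points. For $|k| \geq 4$ only $E_0 \in \mathbbm{Z}$ together with $E_1 \in \mathbbm{Z}$ survives, i.e.\ exactly the list of 12 cases; a single transcendental $\Psi_\mu$ outside this list already obstructs virtual Abelianity, giving (1). For $|k| = 3$ the extra $S=-1$ loophole is admissible, but it is only compatible with $\mu$ equal to the unique index having $\varepsilon_\mu = +1$; so at most one of the three $\Psi_\mu$ can escape the character obstruction outside the 12 integral-exponent cases. Requiring that at least two of the $\Psi_\mu$ be transcendental thus rules out the loophole, forcing $S=1$ and hence $E_0 \in \mathbbm{Z}$, which together with $E_1 \in \mathbbm{Z}$ is conclusion (2). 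The only genuinely delicate step is this uniqueness-of-$\mu$ observation in the $|k|=3$ case, and it is precisely what explains the weakening from ``at least one'' to ``at least two'' transcendental $\Psi_\mu$ in part (2).
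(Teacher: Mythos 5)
Your proposal is correct and follows essentially the same route as the paper: Theorem \ref{prop:EXtest2}(1) gives the Ostrowski relation, Lemma \ref{lem:char} applied to $\mathcal{M}_1$ forces $E_1\in\mathbbm{Z}$, and applied to $\mathcal{M}_0$ forces $E_0\in\mathbbm{Z}$ except for the single $|k|=3$ loophole, which can absorb only the one index with $\varepsilon_\mu=+1$. Your parametrisation by $S=\varepsilon_\gamma+\varepsilon_\beta+\varepsilon_\alpha$ and the condition $(S-1)/2+\varepsilon_\nu-\varepsilon_\mu\in k\mathbbm{Z}$ is just a compact repackaging of the paper's table of values of $\Delta_{\beta,\alpha}$, and the case check agrees with it.
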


\begin{proof}
  Now we assume that $\tmop{EX}_{2, \alpha, \beta}^{\gamma}$ is virtually
  Abelian. By symmetry let's assume that $\Psi_{\gamma}$ is transcendental.
  According to Theorem \ref{prop:EXtest2}, we must have a non trivial
  Ostrowski relation
  \[ \Psi_{\gamma} + d_{\gamma} I_{\gamma} + d_{\beta} I_{\beta} + d_{\alpha}
     I_{\alpha} \in K. \]
  The monodromy $\mathcal{M}_1$ acts by characters on the derivatives of these
  four integrals. According to Lemma \ref{lem:char}.2, the character of
  $\Psi_{\gamma}$ must be equal to the character of one of the $I_{\mu}$. But,
  as in the proof of Proposition \ref{prop:R2ve2}, this condition is
  equivalent to having
  \[ N^{\gamma}_1 - \frac{1}{2} \in \mathbbm{Z} \Leftrightarrow E_1 - 2
     b_{\gamma} - \frac{1}{2} \in \mathbbm{Z} \Leftrightarrow E_1 \in
     \mathbbm{Z}. \]
  Now let's do the same job with $\mathcal{M}_0$. As in the proof of
  Proposition \ref{prop:R2ve2}, the Ostrowski relation imposes that at least
  one of the three following numbers is an integer:
  \[ N^{\gamma}_0 + 2 a_{\gamma} = E_0 ; N^{\gamma}_0 + 2 a_{\beta} = E_0 - 2
     a_{\gamma} + 2 a_{\beta} ; N^{\gamma}_0 + 2 a_{\alpha} = E_0 - 2
     a_{\gamma} + 2 a_{\alpha} . \]
  Let us set $\Delta_{\beta, \alpha} \assign E_0 - 2 a_{\beta} + 2
  a_{\alpha}$. With this notation, we just have seen that $\Psi_{\gamma}$
  transcendental implies that
  \[ E_0 \in \mathbbm{Z} \text{\tmop{or}} \Delta_{\gamma, \beta} \in
     \mathbbm{Z} \text{\tmop{or}} \Delta_{\gamma, \alpha} \in \mathbbm{Z}. \]
  If $E_0 \in \mathbbm{Z}$, both exponent at $z = 0$ and $z = 1$ are integers,
  we are in the 12 cases over the 44 where $\Phi$ is algebraic and these
  arguments do not give any obstruction to the virtual Abelianity of
  $\tmop{EX}_{2, \alpha, \beta}^{\gamma}$.
  
  Now let's assume that $E_0 \nin \mathbbm{Z}$. We are led to find
  obstructions from the conditions $\Delta \in \mathbbm{Z}$. But, by writing
  $a = 1 / 2 + \varepsilon / 2 k$, we get
  \[ \Delta_{\beta, \alpha} = \frac{\varepsilon_{\gamma} - \varepsilon_{\beta}
     + 3 \varepsilon_{\alpha} - 1}{2 k} . \]
  Its values depends on the eight possibilities given by $\varepsilon_{\mu} =
  \pm 1$. They are listed in the following table
  
  \begin{table}[h]
    \begin{tabular}{|c|c|c|c|c|}
      \hline
      & $\varepsilon_{\gamma}$ & $\varepsilon_{\beta}$ &
      $\varepsilon_{\alpha}$ & $\Delta_{\beta, \alpha}$\\
      \hline
      $L_0$ & + & + & + & $1 / k$\\
      \hline
      $L_4$ & - & - & - & $- 2 / k$\\
      \hline
      $L_5$ & - & - & + & $1 / k$\\
      \hline
      $L_6$ & - & + & - & $- 3 / k$\\
      \hline
      $L_7$ & + & - & - & $- 1 / k$\\
      \hline
    \end{tabular}
    \caption{}
  \end{table}
  
  Here, we did not give the value of $\Delta_{\beta, \alpha}$ for the three
  lines $L_1, L_2, L_3$ of Table \ref{tab:E0int}, because they correspond to
  $E_0 \in \mathbbm{Z}$.
  
  From this table, we get that $\Delta \nin \mathbbm{Z}$ except when $|k| =
  3$, in the case of line $L_6$. This prove the first point of the
  proposition.
  
  For the second point, let's assume again that $E_0 \nin \mathbbm{Z}$ and
  $\Psi_{\gamma}$ is transcendental. We must have $\Delta_{\gamma, \beta} \in
  \mathbbm{Z}$ or $\Delta_{\gamma, \alpha} \in \mathbbm{Z}$. But according to
  line $L_6$ we have the implications
  \[ \Delta_{\gamma, \beta} \in \mathbbm{Z} \Rightarrow (\gamma, \beta,
     \alpha) = (+, -, -) \Rightarrow \Delta_{\gamma, \alpha} \in \mathbbm{Z}.
  \]
  Therefore, if for example $\Psi_{\beta}$ is transcendental, we would get,
  according to the table
  \[ \Delta_{\beta, \alpha} = - 1 / k \nin \mathbbm{Z}, \hspace{2em}
     \Delta_{\beta, \gamma} = 1 / k \nin \mathbbm{Z}. \]
  So, the Ostrowski relation will not be satisfied for $\Psi_{\beta}$. This
  prove the claim.
\end{proof}

\end{document}